\numberwithin{equation}{section} 
\newcommand{\R}{\mathbb{R}}
\newcommand{\N}{\mathbb{N}}
\newcommand{\calB}{\mathcal{B}}
\newcommand{\calD}{\mathcal{D}}
\newcommand{\calE}{\mathcal{E}}
\newcommand{\calF}{\mathcal{F}}
\newcommand{\calG}{\mathcal{G}}
\newcommand{\calH}{\mathcal{H}}
\newcommand{\calJ}{\mathcal{J}}
\newcommand{\calK}{\mathcal{K}}
\newcommand{\calL}{\mathcal{L}}
\newcommand{\calN}{\mathcal{N}}
\newcommand{\calP}{\mathcal{P}}
\newcommand{\calR}{\mathcal{R}}
\newcommand{\calX}{\mathcal{X}}
\newcommand{\calY}{\mathcal{Y}}
\newcommand{\bbA}{\mathbb{A}}
\newcommand{\bbB}{\mathbb{B}}
\newcommand{\bbD}{\mathbb{D}}
\newcommand{\bbE}{\mathbb{E}}
\newcommand{\bbG}{\mathbb{G}}
\newcommand{\bbH}{\mathbb{H}}
\newcommand{\bbI}{\mathbb{I}}
\newcommand{\bbK}{\mathbb{K}}
\newcommand{\bbL}{\mathbb{L}}
\newcommand{\bbP}{\mathbb{P}}
\newcommand{\bbQ}{\mathbb{Q}}
\newcommand{\bbS}{\mathbb{S}}
\newcommand{\bbT}{\mathbb{T}}
\newcommand{\bbW}{\mathbb{W}}
\newcommand{\bE}{\boldsymbol{E}}
\newcommand{\ba}{\boldsymbol{a}}
\newcommand{\bb}{\boldsymbol{b}}
\newcommand{\bf}{\boldsymbol{f}}
\newcommand{\bn}{\boldsymbol{n}}
\newcommand{\bv}{\boldsymbol{v}}
\newcommand{\rmD}{\mathrm{D}}
\newcommand{\rmN}{\mathrm{N}}
\newcommand{\wt}[1]{\widetilde{#1}}
\newcommand{\wh}[1]{\widehat{#1}}
\newcommand{\ol}[1]{\overline{#1}}
\newcommand{\jder}[1]{\mathring{#1}}
\newcommand{\dd}{\,\mathrm{d}}
\DeclareMathOperator{\dv}{div}
\DeclareMathOperator*{\argmin}{argmin}
\DeclareMathOperator{\dom}{dom}
\DeclareMathOperator{\sph}{sph}
\DeclareMathOperator{\dev}{dev}
\DeclareMathOperator{\sym}{sym}
\DeclareMathOperator{\tr}{tr}
\newcommand{\tow}{\mathrel{\rightharpoonup}}
\newcommand{\tows}{\mathrel{\xrightharpoonup{*}}}
\newcommand{\np}[1]{(#1)}
\newcommand{\nb}[1]{[#1]}
\newcommand{\lp}[1]{\big(#1\big)}
\newcommand{\lb}[1]{\big[#1\big]}
\newcommand{\Lp}[1]{\Big(#1\Big)}
\newcommand{\Lb}[1]{\Big[#1\Big]}
\newcommand{\llp}[1]{\bigg(#1\bigg)}
\newcommand{\set}[1]{{\{#1\}}}
\newcommand{\setL}[1]{{\biggl\{#1\biggr\}}}
\newcommand{\setc}[2]{{\{#1 \,\vert\, #2\}}}
\newcommand{\setcl}[2]{{\bigl\{#1 \,\big\vert\, #2\bigr\}}}
\newcommand{\norm}[1]{\lVert#1\rVert}
\newcommand{\abs}[1]{{\lvert #1 \rvert}}
\newcommand{\absl}[1]{{\bigl\lvert #1 \big\rvert}}
\newcommand{\BV}{\mathrm{BV}}
\newcommand{\TV}{\mathrm{TV}}
\newcommand{\tcolon}{%
  \mathrel{\mathpalette\tcolon@\relax}%
}
\newcommand{\tcolon@}[2]{%
  \sbox\z@{$\m@th#1:$}%
  \vbox to\ht\z@{%
    \hbox{$\m@th#1.$}%
    \vss
    \hbox{$\m@th#1.$}%
    \vss
    \hbox{$\m@th#1.$}%
  }%
}
\theoremstyle{plain}
\newtheorem{theorem}{Theorem}[section]
\newtheorem{lemma}[theorem]{Lemma}
\newtheorem{proposition}[theorem]{Proposition}
\theoremstyle{definition}
\newtheorem{definition}[theorem]{Definition}
\theoremstyle{remark}
\newtheorem{remark}[theorem]{Remark}
\begin{document}
%%%%%%%%%%%%%%%%%%%%%%%%%%%%%%%%%%%%%%%%%%%%%%%%%%%%%%%%%%%%%%
%%          Title, author, date, abstract, etc.             %%
%%%%%%%%%%%%%%%%%%%%%%%%%%%%%%%%%%%%%%%%%%%%%%%%%%%%%%%%%%%%%%
\title{Solution concepts for a model of visco-elasto-plasticity with slight compressibility}

\author{Thomas Eiter%
\footnote{Freie Universit\"at Berlin, Department of Mathematics and Computer Science, Arnimallee 14, 14195 Berlin, Germany.}
\textsuperscript{\,,}%
\footnote{%
Weierstrass Institute for Applied Analysis and Stochastics,
Anton-Wilhelm-Amo-Str.\,39, 10117 Berlin, Germany.
\\
Email: {\texttt{thomas.eiter@wias-berlin.de}}}%
\ \,\orcidlink{0000-0002-7807-1349}
}

\date{}
\maketitle

\begin{abstract}
We study a model
for the deformation of a visco-elasto-plastic material 
that is nearly incompressible.
It originates from geophysics, is given in the Eulerian description and 
combines a Kelvin--Voigt rheology in the spherical part
with a Jeffreys-type rheology in the deviatoric part. 
Despite a constant density, 
the model allows for non-isochoric deformation 
and the propagation of pressure waves.
An additive decomposition of the strain rate 
into elastic and inelastic parts 
leads to an evolution equation for the small elastic strain,
which is coupled with an adapted momentum equation.
As plasticity is modeled through a non-smooth dissipation potential,
we introduce a weak formulation in terms of a variational inequality.
Since the well-posedness in such a weak setting is out of reach, 
we study two possible modifications: 
the regularization in terms of stress diffusion,
and the relaxation of the solvability concept 
by transition to energy-variational solutions.
In both cases, solutions are constructed by the same time-discrete scheme,
consisting of solving a saddle-point problem in each time step.
\end{abstract}

\noindent
\textbf{MSC2020:} 
35A01, % Existence problems for PDEs: global existence, local existence, non-existence
35A15, % Variational methods applied to PDEs 
35Q35, % PDEs in connection with fluid mechanics
35Q74, % PDEs in connection with mechanics of deformable solids
35Q86, % PDEs in connection with geophysics
76A10, % Viscoelastic fluids 
74B20 % Nonlinear elasticity
\\
\noindent
\textbf{Keywords:}  
visco-elasto-plasticity, 
slight compressibility,
non-smooth potential,
energy-variational solutions,
stress diffusion,
saddle-point structure

%%%%%%%%%%%%%%%%%%%%%%%%%%%%%%%%%%%%%%%%%%%%%%%%%%%%%%%%%%%%%%
%%          Main document                                   %%
%%%%%%%%%%%%%%%%%%%%%%%%%%%%%%%%%%%%%%%%%%%%%%%%%%%%%%%%%%%%%%
\section{Introduction}

The motion of tectonic plates
due to convective processes within the Earth's mantle 
is a well-accepted theory in geology. 
One way to describe this deformation of the Earth's lithosphere,
which happens on large time scales,
is to model rock as
a visco-elasto-plastic material~\cite{MoresiDufourMuehlhaus2002,GeryaYuen2007,HerrenGeryaVand2017IRSDF, Preussetal2019}.
Since there seems to be no reasonable choice for the reference configuration 
in this geophysical framework, 
a description in terms of the current configuration is natural,
that is, an Eulerian description.
The density varies merely on small scales,
which motivates to treat
it as a constant during the evolution.
However, this simplification would lead to an incompressibility constraint,
which does not allow the model to feature the propagation of longitudinal waves.
To combine both characteristics, we follow an approach first employed by T\'emam~\cite{Temam1968,Temam1969},
and consider an additional internal force in the momentum equation.
In summary, the studied model combines this setting of slight compressibility
with a visco-elasto-plastic rheology.

Let $T>0$ be a finite time horizon, and let 
$\Omega\subset\R^3$ be a bounded domain.
We consider the system
\begin{subequations}
\label{eq:system}
\begin{align}
\rho(\partial_t\bv+\bv\cdot\nabla \bv)
&=\dv (\bbS(\bv)+\bbT)-\frac{\rho}{2}(\dv \bv)\bv+\bf
&&\text{in }(0,T)\times\Omega,
\label{eq:sys.mom}
\\
\bbT 
&=D\varphi(\bbE)+\varphi(\bbE)\bbI
&&\text{in }(0,T)\times\Omega,
\label{eq:sys.stress}
\\
\jder{\bbE}+\partial\calP(\dev\bbT)&\ni\bbD(\bv)
&&\text{in }(0,T)\times\Omega.
\label{eq:sys.strain}
\end{align}
The free variables are 
the Eulerian velocity $\bv\colon(0,T)\times\Omega\to\R^3$
and the small elastic strain $\bbE\colon(0,T)\times\Omega\to\R^{3\times3}_{\sym}$.
The mass density $\rho>0$ is assumed to be constant, 
and $\bf\colon(0,T)\times\Omega\to\R^3$ 
denotes an external body force.
Equation~\eqref{eq:sys.mom} is the momentum equation,
enriched with an additional internal force $\frac{\rho}{2}(\dv\bv)\bv$.
The stress tensor
is the sum of a linear viscous stress 
$\bbS(\bv)$
and a second stress tensor $\bbT$
determined from $\bbE$ through the stress-strain relation~\eqref{eq:sys.stress}.
Here, $\varphi\colon\R^{3\times 3}_{\sym}\to\R$ is the potential of the stored elastic energy.
Equation~\eqref{eq:sys.strain} 
thus describes a Maxwell-type rheology,
which corresponds 
to an additive decomposition of the strain rate 
$\bbD(\bv)=\frac{1}{2}(\nabla\bv+\nabla\bv^\top)$,
into an elastic and an inelastic part. 
The elastic strain rate $\jder{\bbE}$ 
is defined
by the Zaremba--Jaumann rate
\[
\jder{\bbE}
=\partial_t\bbE + \bv\cdot\nabla\bbE + \bbE \bbW(\bv)-\bbW(\bv)\bbE,
\qquad
\bbW(\bv)
=\frac{1}{2}(\nabla\bv-\nabla\bv^\top),
\]
while the inelastic strain rate is given by the subdifferential $\partial \calP(\dev\bbT)$
of a convex (dual) dissipation potential $\calP$,
which models viscous and plastic effects.
Without loss of generality, we assume $\calP(0)=0$.
As $\calP$ only acts on the
deviatoric part of $\bbT$,
it does not affect the volumetric strain.
Therefore, the model combines a Kelvin--Voigt rheology in
the volumetric part 
with a Jeffreys-type rheology in the deviatoric part.
We refer to Section~\ref{sec:modeling}
for a detailed derivation of the model.

We complement the system~\eqref{eq:sys.mom}--\eqref{eq:sys.strain}
with the boundary and initial conditions
\begin{align}
\bv&=0 
&&\text{on }(0,T)\times\Gamma_\mathrm{D},
\label{eq:dirichletbdry}
\\
\bv\cdot\bn &=0, \ [(\bbT+\bbS(\bv))\bn]_\tau=0
&&\text{on }(0,T)\times\Gamma_\mathrm{N},
\label{eq:neumannbdry}
\\
(\bv,\bbE)(0)&=(\bv_0,\bbE_0) 
&&\text{in }\Omega
\label{eq:initial}
\end{align}
\end{subequations}
for given initial data $(\bv_0,\bbE_0)$.
The boundary $\partial\Omega$ is assumed to have Lipschitz regularity,
and $\Gamma_\mathrm{D}\subset\partial\Omega$ and $\Gamma_\mathrm{N}=\partial\Omega\setminus\Gamma_\mathrm{D}$
denote disjoint parts of the boundary,
where we prescribe no-slip and perfect-slip boundary conditions, respectively.
Here $\bn\colon\partial\Omega\to\R^3$ denotes the outer unit normal, and $[\ba]_\tau$ denotes the 
tangential part of a vector field $\ba\colon\partial\Omega\to\R^3$,
that is,
$[\ba]_\tau
=\ba-(\ba\cdot\bn)\bn$. 

One delicate feature of the model~\eqref{eq:system}
is the convex potential $\calP$,
which is a nonlinear and non-smooth functional in general.
In~\cite{MoresiDufourMuehlhaus2002,HerrenGeryaVand2017IRSDF, GeryaYuen2007,Preussetal2019},
the visco-plastic behavior of rock is modeled by the subdifferential $\partial\calP$ of
\begin{equation}
\label{eq:disspot.example}
\mathcal P(\bbA)
=\int_\Omega P(\bbA(x))\dd x,
\qquad 
P(\bbA)
=\begin{cases}
\frac{\nu}{2}|\bbA|^2 
&\text{if }|\bbA|\leq \sigma_\mathrm{yield},
\\
+\infty
&\text{if }|\bbA|> \sigma_\mathrm{yield},
\end{cases}
\end{equation}
for some shear viscosity $\nu>0$
and a yield stress $\sigma_{\mathrm{yield}}>0$,
which combines linear viscoelasicity for `small' stresses
with plastic deformation for `large' stresses.
As the subdifferential $\partial\calP(\dev\bbT)$ is multi-valued in general,
equation~\eqref{eq:sys.strain} is indeed a differential inclusion
and not a mere equality.
Using the definition of the subdifferential,
one can introduce a weak formulation of~\eqref{eq:system}
in terms of a variational inequality, 
which intertwines the standard weak formulation 
with the energy inequality,
see Subsect.\,\ref{subsec:weakform}.

Under the assumption of incompressibility
and for a quadratic stored-energy functional $\varphi$,
a similar weak formulation was used 
in~\cite{EiterHopfMielke2022,EiterHopfLasarzik2023},
where the existence and the weak-strong uniqueness of such solutions was shown
after a regularization.
The existence result was recently extended in~\cite{ChengLasarzikThomas2025CahnHillardVisco}
to the case of a two-phase flow.
However, in all these works, weak solutions are only obtained
after introduction of a diffusion term in~\eqref{eq:sys.strain},
which increases the regularity of solutions and gives
additional compactness properties
useful for the existence theory.

An analogous approach was applied in~\cite{roubicek_timediscviscoel_2025},
where a compressible visco-elastic rheology was studied.
More precisely, \eqref{eq:sys.stress} and~\eqref{eq:sys.strain}
were considered for the (smooth) quadratic dissipation potential
$\calP(\dev\bbT)=\frac{\nu}{2}|\dev\bbT|^2$ with some shear viscosity $\nu>0$,
and coupled with the classical continuity and momentum equations.
The existence of weak solutions was shown after introducing a viscous hyperstress
in the momentum equation.
This improves the regularity of the velocity field, 
whereby higher-order spatial regularity of the density and the elastic strain
is inherited from the initial values.

Therefore, the existence of weak solutions to~\eqref{eq:system}
also seems to require a modification
by the introduction of additional terms.
Here we follow the idea
from~\cite{EiterHopfMielke2022,EiterHopfLasarzik2023, ChengLasarzikThomas2025CahnHillardVisco}
and introduce a diffusive term
in the transport-type equation~\eqref{eq:sys.strain}.
For a diffusion coefficient $\gamma>0$,
we consider
\begin{subequations}
\label{eq:system.reg}
\begin{align}
\rho(\partial_t\bv+\bv\cdot\nabla \bv)
&=\dv (\bbS(\bv)+\bbT)-\frac{\rho}{2}(\dv \bv)\bv+\bf
&&\text{in }(0,T)\times\Omega,
\label{eq:sys.mom.reg}
\\
\bbT 
&= D\varphi(\bbE)+\varphi(\bbE)\bbI
&&\text{in }(0,T)\times\Omega,
\label{eq:sys.stress.reg}
\\
\jder{\bbE}+\partial\calP(\dev \bbT)&=\bbD(\bv) + \gamma\Delta [D\varphi(\bbE)]
&&\text{in }(0,T)\times\Omega,
\label{eq:sys.strain.reg}
\\
\bv&=0 
&&\text{on }(0,T)\times\Gamma_\mathrm{D},
\label{eq:dirichletbdry.reg}
\\
\bv\cdot\bn &=0, \ [(\bbT+\bbS(\bv))\bn]_\tau=0
&&\text{on }(0,T)\times\Gamma_\mathrm{N},
\label{eq:neumannbdry.reg}
\\
\bn\cdot\nabla [D\varphi(\bbE)]&=0 
&&\text{on }(0,T)\times\partial\Omega,
\label{eq:sys.bdry.E.reg}
\\
(\bv,\bbE)(0)&=(\bv_0,\bbE_0) 
&&\text{in }\Omega.
\label{eq:initial.reg}
\end{align}
\end{subequations}
Then~\eqref{eq:sys.strain.reg} is of parabolic type,
and~\eqref{eq:sys.bdry.E.reg} are the corresponding natural boundary conditions.
This regularization enables us to show the existence of weak solutions,
see Theorem~\ref{thm:existence.gamma} below.
However, to ensure convexity of the associated contribution to the dissipation,
we restrict the analysis to the case of a quadratic stored elastic energy $\varphi$ here. 
Then the diffusion term in~\eqref{eq:sys.strain.reg}
reduces to a linear term in $\bbE$,
and $\bbE$ appears in the energy-dissipation balance
in a quadratic way, compare~\eqref{eq:enin.reg} below.
To regularize a viscoelastic flow in terms of stress diffusion 
is a nowadays common approach~\cite{BMPS2018PDEA,BaBuMa21LDET, EiterHopfMielke2022,ChengLasarzikThomas2025CahnHillardVisco},
and it can be derived from thermodynamic considerations~\cite{MPSS18TVRT}.

One regards the regularized system~\eqref{eq:system.reg} 
for small $\gamma>0$
as an approximation of the original system~\eqref{eq:system}.
Therefore, it is natural to ask
what happens to solutions to~\eqref{eq:system.reg} as $\gamma\to0$.
Suitable a priori bounds indicate the convergence towards a limit object,
at least along a subsequence,
and the question is in which way 
this limit is connected to the original problem~\eqref{eq:system}.
Usually, the limit is not a weak solution,
but one arrives at a solution in a further generalized sense.

The study of such generalized solvability concepts,
which further relax the notion of weak solutions,
is nowadays quite common, in particular in fluid dynamics.
For instance,
DiPerna and Majda~\cite{DiPernaMajda1987OscConcIncomprFluid} introduced
the concept of \textit{measure-valued solutions} to the incompressible Euler equations.
A different approach goes back to P.-L.\,Lions~\cite[Sec.\,4.4]{Lions2013FM1},
who defined \textit{dissipative solutions} to the Euler equations
in terms of a relative energy inequality.
In both cases,
those solutions were constructed as the inviscid limit of weak solutions
to the Navier--Stokes equations.
In~\cite{EiterHopfLasarzik2023} and~\cite{ChengLasarzikThomas2025CahnHillardVisco},
the approach by Lions was adapted to the aforementioned systems of
incompressible visco-elasto-plasticity with one and two phases, respectively,
There, the solvability concept was based on a relative energy inequality,
and solutions were constructed as limits of weak solutions to
the regularized system.
While such solutions are also called dissipative solutions in~\cite{ChengLasarzikThomas2025CahnHillardVisco},
the name \textit{energy-variational solutions} was used in~\cite{EiterHopfLasarzik2023}.
However, the latter notion of solution 
has been further refined in various works 
since then~\cite{EitLas24envarhyp,ALR24envarvisc,LasRei23envarEL,Las24envarMDinc, Las25envarstr,EiterSchindler25,EiterLasarzikSliwinski_exselenvar}.
We follow this approach and introduce a corresponding notion of energy-variational solutions to problem~\eqref{eq:system},
see Subsect.\,\ref{subsec:envarsol},
where we adapt this refined solvability concept to our framework.

To show the existence of energy-variational solutions to~\eqref{eq:system}, 
we could proceed similarly to~\cite{EiterHopfLasarzik2023,ChengLasarzikThomas2025CahnHillardVisco}
and pass to the limit $\gamma\to0$ with weak solutions to~\eqref{eq:system.reg}.
Instead, we follow the approach 
from~\cite{EitLas24envarhyp,ALR24envarvisc}
and obtain solutions $(\bv,\bbE)$ to~\eqref{eq:system}
as the limit $N\to\infty$ 
of time-discrete approximate solutions $(\bv_N,\bbE_N)$,
which are constructed by iteratively solving 
a saddle-point problem at each time step.
To this end, one introduces a certain \textit{regularity weight}
that gives sufficient convexity properties to render the saddle-point problem solvable
and to pass to the limit by weak convergence.
As it was observed in~\cite{EiterLasarzikSliwinski_exselenvar}, 
both steps are, to some extent, independent of each other,
which allows to use two different regularity weights.

In this article, we apply this iterative scheme 
for the regularized and the original system simultaneously.
Indeed, we use the same scheme to construct approximate solutions
for $\gamma=0$ and for $\gamma>0$.
On the one hand, 
the passage to the time-continuous limit 
only yields a weak solution in the case $\gamma>0$
due to the availability of stronger a priori estimate.
On the other hand, 
the existence of energy-variational solutions
can be shown under more general assumptions on $\calP$ and $\varphi$.
In particular, can allow for non-quadratic $\varphi$
if $\gamma=0$.

\paragraph{Outline}
In Section~\ref{sec:modeling},
we further motivate the model~\eqref{eq:system},
and we derive a corresponding weak formulation.
In Section~\ref{sec:existence}
we list our main assumptions, 
introduce the notions of weak solutions to~\eqref{eq:system.reg}
and of energy-variational solutions to~\eqref{eq:system}.
We present the main existence results
and study the consistency of both concepts with classical solutions.
Section~\ref{sec:approximation} is dedicated to the
construction of approximate solutions,
while Section~\ref{sec:limit.passage} concerns the
passage to the limit,
which finally completes the existence proofs.

\paragraph{Notation}
For the Euclidean inner product between vectors, matrices or tensors, we write
\[
\ba\cdot\bb=\sum_{j=1}^3\ba_j\bb_j,
\quad
\bbA:\bbB=\sum_{j,k=1}^3\bbA_{jk}\bbB_{jk},
\quad
\bbG\tcolon\bbH=\sum_{j,k,\ell=1}^3\bbG_{jk\ell}\bbH_{jk\ell},
\]
for vectors $\ba=(\ba_j),\,\bb=(\bb_j)\in\R^3$, 
matrices $\bbA=(\bbA_{jk}),\,\bbB=(\bbB_{jk})\in\R^{3\times3}$,
and third-order tensors $\bbG=(\bbG_{jk\ell}),\,\bbH=(\bbH_{jk\ell})\in\R^{3\times3\times3}$.
Moreover, $\bbI\in\R^{3\times3}$ is the identity tensor,
and the trace of $\bbA$ is denoted by $\tr\bbA=\bbA:\bbI$.
By $\sph\bbA=\frac{1}{3}(\tr\bbA)\bbI$, $\dev\bbA=\bbA-\frac{1}{3}(\tr\bbA)\bbI$
and $\sym\bbA=\frac{1}{2}(\bbA+\bbA^\top)$,
we denote the spherical, deviatoric and symmetric part of $\bbA$,
respectively.
We introduce the subspaces of symmetric and symmetric deviatoric 
matrices by
\[
\begin{aligned}
\R^{3\times3}_{\sym}
&=\setcl{\bbA\in\R^{3\times3}}{\bbA^\top=\bbA},
\\
\R^{3\times3}_{\sym,0}
&=\setcl{\bbA\in\R^{3\times3}}{\bbA^\top=\bbA,\ \tr\bbA=0}.
\end{aligned}
\]

Let $X$ be a Banach space and $\calP\colon X\to[0,\infty]$ be a convex functional. 
By $\dom\calP=\setc{x\in X}{\calP(x)<\infty}$ we denote its domain,
$\calP^*$ denotes the convex conjugate and $\partial\calP$ 
is the convex subdifferential.
For a continuously differentiable function $\varphi\colon X\to\R$,
we denote its Fr\'echet derivative by $D\varphi$.

\section{Derivation of the model and weak formulation}
\label{sec:modeling}

To derive the model~\eqref{eq:system},
we explain the considered visco-elasto-plastic rheology
the incorporation of the assumption of slight compressibility.
Subsequently, we derive the energy-dissipation balance,
and we motivate a weak formulation 
in terms of a variational inequality.

\subsection{Visco-elasto-plastic rheology}

The main motivation for the model~\eqref{eq:system} 
arises from geophysics,
where related models have been used to study
the rock deformation in the Earth's lithosphere~\cite{MoresiDufourMuehlhaus2002,GeryaYuen2007,Preussetal2019}.
In this context, the deformation is
considered on very long time scales
and with very slow speeds of only millimeters per year.
Due to the lack of a plausible reference configuration, 
an Eulerian description is preferred 
over a Lagrangian description~\cite{roubicek_timediscviscoel_2025}.
Therefore, we consider the continuity equation and the momentum equation
given by
\begin{align}
\partial_t\rho + \dv(\rho\bv)&=0,
\label{eq:NScomp.cont}
\\
\rho(\partial_t\bv + \bv\cdot\nabla\bv)&=\dv(\bbS(\bv)+\bbT)+\bf
\label{eq:NScomp.mom}
\end{align}
for the density $\rho$, the Eulerian velocity field $\bv$ and an external force $\bf$,
where the total stress is the sum of two parts $\bbS(\bv)$ and $\bbT$. 
Here, $\bbS(\bv)$ is a classical linear viscous stress tensor.
A standard choice for isotropic viscous behavior
would be the Newtonian stress tensor
\begin{equation}
\label{eq:Newtonian.stress}
\bbS(\bv)
= 2\mu_1 \Big( \bbD(\bv) - \frac13 \dv \bv \bbI \Big) + \mu_2 \dv \bv \bbI
=2\mu_1\dev\bbD(\bv)+3\mu_2\sph\bbD(\bv),
\end{equation}
where $\mu_1,\mu_2\geq0$
denote shear and bulk viscosity coefficients.

To describe the deformation of tectonic plates, the development of faults,
and the occurrence of aseismic creep,
such geophysical models combine viscous, elastic and plastic effects~\cite{MoresiDufourMuehlhaus2002,GeryaYuen2007,HerrenGeryaVand2017IRSDF, Preussetal2019, BabeykoSobolev2008,gerya_2019,PopovSobolev2008, Roubicek2021,Yarushina2015DecompactionOP}.,
which we incorporate in terms of the stress tensor $\bbT$.
We consider different rheologies for volumetric and the shear deformation.
While the spherical stress $\sph\bbT=\frac{1}{3}(\tr\bbT)\bbI$ is purely elastic,
the deviatoric part $\dev\bbT$ is subject to 
a Maxwell-type visco-elasto-plastic rheology.
We consider the case of small strain and
employ the Green--Naghdi-type additive decomposition into 
the elastic strain $\bbE$ and the inelastic strain $\bbP$.
For the strain rate $\bbD(\bv)=\frac{1}{2}(\nabla\bv+\nabla\bv^\top)$, 
this gives
$\bbD(\bv)=\jder{\bbE}+\jder{\bbP}$.
Here, the $\jder{(\cdot)}$ denotes an objective tensor rate,
which we discuss below, see~\eqref{eq:jaumannder}.

The elastic strain $\bbE$ is associated with the stress $\bbT$ 
through a stored energy potential $\varphi\colon\R^{3\times3}_{\sym}\to[0,\infty)$ 
such that
\begin{equation}
\label{eq:stress.strain.el}
\bbT=D\varphi(\bbE)+\varphi(\bbE)\bbI.
\end{equation}
Here the second term is an `internal' pressure that 
is due to the transport of the elastic energy.
By allowing for a non-quadratic stored energy $\varphi$, 
the model is also applicable for large displacements,
which is relevant in the geophysical context. 
Note that this does not contradict the above assumption of small strain,
compare~\cite{roubicek_timediscviscoel_2025}.

The inelastic strain $\bbP$ is assumed to be purely isochoric
and governed by the stress-strain relation 
$\dev\bbT\in\partial\calR(\jder{\bbP})$,
where $\calR$ is a convex dissipation potential,
which can be non-smooth in general.
Invoking the dual potential $\calP=\calR^*$,
we can thus reformulate the decomposition $\bbD(\bv)=\jder{\bbE}+\jder{\bbP}$ 
of the strain rate as
\begin{equation}
\label{eq:strain.dec}
\bbD(\bv)\in\jder{\bbE}+\partial\calP(\dev\bbT).
\end{equation}
One common characteristic of the models
from~\cite{MoresiDufourMuehlhaus2002,HerrenGeryaVand2017IRSDF, GeryaYuen2007,Preussetal2019}
is the transition from visco-elastic to plastic behavior in the deviatoric component 
when a certain yield stress $\sigma_{\mathrm{yield}}>0$ is reached.
This feature reflects the brittle nature of rock.
Mathematically, it can be incorporated in $\calP$ by the choice 
$\calP(\bbA)
=\int_\Omega P(\bbA(x))\dd x$, 
with $P(\bbA)=+\infty$ for $|\bbA|>\sigma_\mathrm{yield}$.
Combined with a linear viscosity law, this gives~\eqref{eq:disspot.example}.

As we consider a convective model, 
we also have to choose a suitable objective rate $\jder\bbE$.
Objectivity means that it is invariant under a (time-dependent) change of frame.
There are different choices available,
and we use Zaremba--Jaumann rate
\begin{equation}\label{eq:jaumannder}
\jder{\bbE}
=\partial_t\bbE + \bv\cdot\nabla\bbE + \bbE \bbW(\bv)-\bbW(\bv)\bbE,
\qquad
\bbW(\bv)
=\frac{1}{2}(\nabla\bv-\nabla\bv^\top),
\end{equation}
which is the common choice in the field of geophysics~\cite{MoresiDufourMuehlhaus2002,GeryaYuen2007,HerrenGeryaVand2017IRSDF, Preussetal2019, BabeykoSobolev2008,gerya_2019,PopovSobolev2008, Roubicek2021,Yarushina2015DecompactionOP}.
We further refer to~\cite[p.\,494]{Biot1965IncrementalDeformation} 
for a theoretical justification of the Zaremba--Jaumann rate,
and to \cite{XiaoBruhnsMeyers1998corotationalrates,MeyersSchiesseBruhns2000objectiverates}
for a more general overview over objective tensor rates.
One readily sees that~\eqref{eq:strain.dec} preserves symmetry of $\bbE$.
Moreover, since we have $\tr(\bbE \bbW(\bv)-\bbW(\bv)\bbE)=0$, 
we can decompose~\eqref{eq:strain.dec} as
\begin{equation}\label{eq:separation.strainrate}
\partial_t\tr(\bbE)+\bv\cdot\nabla\tr(\bbE)=\dv\bv, 
\qquad
(\dev\bbE)\jder{\,}+\partial \calP(\dev\bbT)\ni \bbD(\bv)-\frac{1}{3}(\dv\bv)\bbI,
\end{equation}
that is, the evolution of volumetric and shear strain remain separated.
Moreover, if $D\varphi(\bbE)$ and $\bbE$ commute, we have
\[
\lp{\bbE \bbW(\bv)-\bbW(\bv)\bbE}:D\varphi(\bbE)
=\bbW(\bv):\lp{\bbE^\top D\varphi(\bbE)-D\varphi(\bbE)\bbE^\top}=0
\]
where we used the symmetry of $\bbE$,
which yields
\begin{equation}
\label{eq:Edot.Dphi}
\jder{\bbE}:D\varphi(\bbE)=\partial_t\varphi(\bbE) + \bv\cdot\nabla \varphi(\bbE),
\end{equation}
so that Zaremba--Jaumann rate leads to a simple transport of the stored elastic energy.
Note that $D\varphi(\bbE)$ and $\bbE$ commute if and only if $\bbT$ and $\bbE$ commute,
which is true for isotropic and frame-indifferent materials,
see~\cite[Sect.\,37]{Gurtin1981IntoContMech} for instance.

The model consisting of \eqref{eq:NScomp.cont}, \eqref{eq:NScomp.mom},
\eqref{eq:stress.strain.el}, \eqref{eq:strain.dec}
is a nonlinear generalization 
the model from~\cite{MoresiDufourMuehlhaus2002,HerrenGeryaVand2017IRSDF, GeryaYuen2007,Preussetal2019},
where linear elasticity and viscosity are combined with 
perfect plasticity.
In this case, the stored energy potential is given by
\begin{equation}
\label{eq:storedenergy.example}
\varphi(\bbE)=\frac{K}{2}|\sph\bbE|^2+\frac{G}{2}|\dev\bbE|^2
\end{equation}
with elastic bulk modulus $K>0$ and elastic shear modulus $G>0$,
and the (dual) dissipation potential $\calP$ is given by~\eqref{eq:disspot.example}.
Moreover, the present model generalizes the large-deformation 
small-strain viscoelastic model from~\cite{roubicek_timediscviscoel_2025} 
by including plastic effects.

In summary,
we consider a Kelvin--Voigt rheology
in the volumetric/spherical part,
combined with a Jeffreys (or anti-Zener) rheology
enriched with plasticity
in the isochoric/deviatoric part.
In particular, the model allows for isochoric creep
and isochoric plastic deformation.

\subsection{Slightly compressible approximation}

In many situations,
like the flow of water or the deformation of rock,
the material compression is negligible compared to other effects during the evolution.
On the one hand, this motivates to simplify the model
by directly imposing incompressibility as an assumption, 
and to work with the Navier--Stokes equations 
for incompressible flow.
Such models were analytically studied
in~\cite{EiterHopfMielke2022,EiterHopfLasarzik2023,ChengLasarzikThomas2025CahnHillardVisco}.
On the other hand, such a restriction to divergence-free velocity fields
would not allow for the propagation of pressure waves,
corresponding to sound waves or longitudinal seismic waves.

Therefore, we consider a model for slight compressibility,
which is an approximation of~\eqref{eq:NScomp.cont},~\eqref{eq:NScomp.mom} that 
assumes a constant density
without restricting to the divergence-free velocities.
Clearly, this contradicts the continuity equation~\eqref{eq:NScomp.cont},
and simply omitting it would destroy the energetic structure of the evolution.
In particular, the kinetic energy-dissipation balance
\begin{equation}\label{eq:kinetic.diss}
\partial_t\Lp{\frac{\rho|\bv|^2}{2}}+\dv\Lp{\frac{\rho|\bv|^2}{2}\bv}
+\Sigma:\bbD(\bv)-\dv(\Sigma^\top\bv)=\bf\cdot\bv, 
\qquad \Sigma=\bbS(\bv)+\bbT,
\end{equation}
would no longer apply.
To compensate this, one introduces an additional internal force in the momentum balance~\eqref{eq:NScomp.mom}.
More precisely, for a homogeneous material with constant density $\rho$,
we consider
\begin{equation}\label{eq:mom.semicomp}
\rho(\partial_t\bv+\bv\cdot\nabla \bv)
=\dv (\bbS(\bv)+\bbT)-\frac{\rho}{2}(\dv \bv)\bv+\bf.
\end{equation}
This approach goes back to T\'emam~\cite{Temam1968,Temam1969},
see also~\cite[Ch.\,III, Sect.\,8]{Temam1977NSE}.
The new term ensures that the kinetic energy
is transported properly,
and that multiplying~\eqref{eq:mom.semicomp} with $\bv$
leads to the energy-dissipation balance~\eqref{eq:kinetic.diss}.
Note that there exist other possible modifications
of the momentum balance
that preserve~\eqref{eq:kinetic.diss},
compare~\cite[Remark 1]{roubicek_quasisemicompressible_2021}.
The choice~\eqref{eq:mom.semicomp}
has the advantage 
that one arrives at the model for incompressible flow by setting $\dv\bv=0$.
In several works, a combination of~\eqref{eq:mom.semicomp} 
with an evolution equation for the pressure
is used to approximate the incompressible Navier--Stokes flow
numerically or
analytically~\cite{ChenLaytonMcLaughlin2019,DeCariaLaytonMcLaughlin2017, Prohl1997,DonatelliMarcati2010,DonatelliMarcati2006,BerselliSpirito2018}.
For further discussion and references of such slightly compressible approximations, 
we refer to the recent article~\cite{roubicek_quasisemicompressible_2021}.

Combining the adapted momentum equation~\eqref{eq:mom.semicomp} 
with the visco-elasto-plastic stress-strain relations~\eqref{eq:stress.strain.el} and~\eqref{eq:strain.dec}
and suitable initial and boundary conditions,
we finally arrive at the full continuum model~\eqref{eq:system}.
Observe that~\eqref{eq:system} generalizes the model for \textit{semi-compressible} viscous flow from~\cite{roubicek_quasisemicompressible_2021},
where a pure spherical stress $\bbT=-(\pi+\frac{\pi^2}{2K})\bbI$ with elastic bulk modulus $K>0$ and pressure $\pi$ is considered, which are subject to 
\[
\frac{1}{K}\partial_t\pi + \frac{1}{K}\bv\cdot\nabla \pi +\dv\bv=0.
\]
Indeed, this particular case can be obtained from~\eqref{eq:mom.semicomp},~\eqref{eq:stress.strain.el},~\eqref{eq:strain.dec}
by choosing $\varphi$ as in~\eqref{eq:separation.strainrate} with $G=0$
and by setting $\pi=-\frac{1}{3}\tr(\bbT)$.

\subsection{Energy dissipation}
\label{sec:energetics}

To get further insight into the energetics of the system~\eqref{eq:system},
we consider $\Xi\in\partial\calP(\dev\bbT)$ and multiply~\eqref{eq:sys.strain}
with $D\varphi(\bbE)$.
Using~\eqref{eq:Edot.Dphi} and integration by parts,
we calculate
\[
\begin{aligned}
0=\lp{\jder{\bbE}+\Xi-\bbD(\bv)}:D\varphi(\bbE)
&=\partial_t\varphi(\bbE)
+ \bv\cdot\nabla\varphi(\bbE)+\Xi:D\varphi(\bbE)-\bbD(\bv):D\varphi(\bbE)
\\
&=\partial_t\varphi(\bbE)+\dv\lp{\varphi(\bbE)\bv}
+\Xi:\bbT-\bbD(\bv):\bbT,
\end{aligned}
\]
where we used~\eqref{eq:sys.stress} and that $\tr\Xi=0$.
Due to the boundary conditions~\eqref{eq:dirichletbdry} and~\eqref{eq:neumannbdry},
combining this identity with~\eqref{eq:kinetic.diss}
and integrating over $\Omega$,
we conclude
\begin{equation}
\label{eq:eneq.derived}
\frac{\mathrm d}{\mathrm d t}\int_\Omega \frac{\rho|\bv|^2}{2}+\varphi(\bbE)\dd x
+\int_\Omega \bbS(\bv):\bbD(\bv)+\Xi:\bbT \dd x 
=\int_\Omega\bf\cdot\bv\dd x
\end{equation}
for $\Xi\in\partial\calP(\dev\bbT)$.
This shows that the total energy 
is given by
\begin{equation}
\label{eq:energy.def}
\calE(\bv,\bbE)
:=\int_\Omega\frac{\rho}{2}|\bv|^2+\varphi(\bbE)\dd x
\end{equation}
as the sum of the kinetic energy and the stored elastic energy.
It is dissipated 
by the Newtonian viscous stress $\bbS(\bv)$
and the deviatoric stress associated with the (dual) dissipation potential $\calP$,
and it is subject to the power of the external force $\bf$.

As $\Xi\in\partial P(\dev\bbT)$, we have $\Xi=\dev\Xi$, 
and we can use the Fenchel equivalence
to write $\int_\Omega\Xi:\bbT \dd x=\calP(\dev\bbT)+\calP^*(\Xi)$.
Employing this identity in~\eqref{eq:eneq.derived}
and omitting $\calP^*(\Xi)\geq 0$, which follows from $\calP(0)=0$,
leads to the energy-dissipation inequality
\begin{equation}
\label{eq:enin.P}
\frac{\mathrm d}{\mathrm d t}\calE(\bv,\bbE)
+\int_\Omega \bbS(\bv):\bbD(\bv) \dd x +\calP(\dev\bbT)
\leq\int_\Omega\bf\cdot\bv\dd x.
\end{equation}
Although this inequality is not equivalent to~\eqref{eq:eneq.derived},
it has the advantage that the subdifferential element
$\Xi\in\partial P(\dev\bbT)$ is not present in~\eqref{eq:enin.P}.
Moreover, a weak form of~\eqref{eq:enin.P}
is directly included in the generalized solution concepts discussed in this article.

\subsection{Towards a weak formulation}
\label{subsec:weakform}

To derive a weak formulation of~\eqref{eq:system},
we follow the approach from~\cite{roubicek2005nonlPDEs,EiterHopfMielke2022,EiterHopfLasarzik2023}.
Instead of simply testing~\eqref{eq:sys.strain} and integrating,
we express the subdifferential in terms of the inequality
\[
\Xi\in\partial\calP(\dev\bbT)\iff 
\forall \Psi: \ \calP(\dev\bbT)-\calP(\dev\Psi)
\leq \int_\Omega \Xi:(D\varphi(\bbE)-\Psi)\dd x,
\]
where $\Psi$ belongs to a suitable class of $\R^{3\times3}_{\sym}$-valued 
test functions.
Here we used $\dev\bbT=\dev D\varphi(\bbE)$ 
and that $\calP$ is defined on deviatoric tensors,
so that $\tr\Xi=0$.
With this characterization,
we can express~\eqref{eq:sys.strain} as
\[
\int_\Omega \jder{\bbE}{\,:\,} D\varphi(\bbE) 
- \bbD(\bv) {\,:\,}D\varphi(\bbE) \dd x
+ \calP(\dev D\varphi(\bbE))
-\int_\Omega \jder{\bbE}{\,:\,}\Psi-\bbD(\bv){\,:\,}\Psi \dd x
-\calP(\dev\Psi)
\leq 0.
\]
This corresponds to formally testing~\eqref{eq:strain.dec} with $D\varphi(\bbE)-\Psi$.
We proceed similarly with the momentum equation~\eqref{eq:sys.mom}
and multiply it with $\bv-\psi$ for a test function $\psi$ and integrate.
Adding up both equations, integrating over a time interval $(s,t)$ for $s<t$
and using integration by parts, 
where we make use of the boundary conditions~\eqref{eq:dirichletbdry} and~\eqref{eq:neumannbdry},
we arrive at
\begin{equation}
\label{eq:weakform.original}
\begin{aligned}
&\Lb{\calE(\bv,\bbE)-\int_\Omega\rho\bv\cdot\psi+\bbE:\Psi\dd x}\Big|_s^t
+\int_s^t\int_\Omega \bbS(\bv):\bbD(\bv)-\bf\cdot\bv \dd x \dd t'
\\
&\quad
+\int_s^t\int_\Omega
\rho\bv\cdot\partial_t\psi
+\frac{\rho}{2}(\dv\bv)(\bv\cdot\psi)
+\rho (\bv\otimes\bv):\nabla\psi \dd x \dd t'
\\
&\quad+\int_s^t\int_\Omega
- \lp{\bbS(\bv)+D\varphi(\bbE)}:\bbD(\psi)
-\varphi(\bbE)\dv\psi 
+\bf\cdot\psi\dd x \dd t'
\\
&\quad
+\int_s^t\int_\Omega
\bbE:\partial_t\Psi
+(\dv\bv)\bbE:\Psi+\bv\cdot\nabla \Psi:\bbE - (\bbE\bbW(\bv)-\bbW(\bv)\bbE):\Psi
 \dd x \dd t'
\\
&\quad+\int_s^t\int_\Omega
\bbD(\bv):\Psi  \dd x \dd t'
+\int_s^t\calP(\dev D\varphi(\bbE)) -\calP(\dev\Psi)\dd t'\leq 0
\end{aligned}
\end{equation}
for all test functions $\psi\in C_c^1([0,T)\times\ol\Omega;\R^3)$
and $\Psi\in C_c^1([0,T)\times\ol\Omega;\R_{\sym}^{3\times 3})$
with $\psi=0$ on $(0,T)\times\Gamma_{\rmD}$ and 
$\psi\cdot\bn=0$ on $(0,T)\times\Gamma_{\rmN}$.
The calculations are similar to the derivation of~\eqref{eq:eneq.derived},
where we tested~\eqref{eq:sys.mom} and~\eqref{eq:sys.strain}
with $\bv$ and $D\varphi(\bbE)$, respectively.
If we ignore $\calP$, then~\eqref{eq:weakform.original}
is merely the sum of an energy inequality 
and the standard weak formulations of~\eqref{eq:sys.mom} and~\eqref{eq:sys.strain}.
Observe that for $(\psi,\Psi)=0$, 
we rediscover a time-integrated version of the energy-inequality~\eqref{eq:enin.P}.
Moreover, the initial conditions~\eqref{eq:initial} are satisfied in a weak sense
if~\eqref{eq:weakform.original} holds for $s=0$ 
with $\bv(0)=\bv_0$ and $\bbE(0)=\bbE_0$.

For a reformulation of~\eqref{eq:weakform.original},
we can make use of the following result.

\begin{lemma}\label{lem:ineq.pointwisevariational}
Let $f\in L^1(0,T)$, $g\in L^\infty(0,T)$ and $g_0\in\R$.
Then the inequality 
\begin{equation}
-\int_0^T \phi'(\tau) g(\tau) \dd \tau  + \int_0^T \phi(\tau) f(\tau) \dd \tau - \phi(0)g_0 \leq 0 
\label{ineq1}
\end{equation}
holds for all $\phi \in C^1_c ([0,T))$ with $\phi \geq 0$
if and only if the inequality
\begin{equation}
    g(t) -g(s) + \int_s^t f(\tau) \dd \tau \leq 0 
    \label{ineq2}
\end{equation}
holds for a.e.~$s,\, t\in[0,T)$ with $s<t$,
including $s=0$ if we replace $g(0)$ with $g_0$.
In these cases, $g$ coincides a.e.~with an element of $\BV([0,T])$ 
with $\lim_{s\searrow0}g(s)\leq g_0$ and $\lim_{s\searrow t} g(s)\leq \lim_{s\nearrow t} g(s)$ for all $t\in(0,T)$.
\end{lemma}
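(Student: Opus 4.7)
The plan is to reformulate both inequalities in terms of the integrated quantity
\begin{equation*}
G(t) := g(t) + \int_0^t f(\tau)\dd\tau,
\end{equation*}
so that \eqref{ineq2} becomes exactly the essential monotonicity of $G$, together with the bound $G \leq g_0$ inherited from the $s = 0$ case.

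For the implication \eqref{ineq2}$\,\Rightarrow\,$\eqref{ineq1}, I would first use $\phi(T)=0$ together with Fubini to rewrite
\begin{equation*}
\int_0^T \phi(\tau)f(\tau)\dd\tau = -\int_0^T \phi'(s)\int_0^s f(\tau)\dd\tau\dd s
\quad\text{and}\quad
-\phi(0)g_0 = \int_0^T \phi'(s)g_0\dd s,
\end{equation*}
so that the left-hand side of \eqref{ineq1} collapses to $-\int_0^T \phi'(s)[G(s)-g_0]\dd s$. Since \eqref{ineq2} supplies a right-continuous non-increasing representative $\bar G$ of $G$ with $\bar G(0) \leq g_0$, Stieltjes integration by parts (with the boundary term at $T$ killed by the compact support of $\phi$) gives
\begin{equation*}
-\int_0^T \phi'(s)[\bar G(s)-g_0]\dd s = \phi(0)[\bar G(0)-g_0] + \int_{(0,T]}\phi(s)\dd\bar G(s),
\end{equation*}
and both summands are non-positive because $\phi \geq 0$, $\bar G(0)\leq g_0$ and $\dd\bar G \leq 0$, which yields \eqref{ineq1}.

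For the converse \eqref{ineq1}$\,\Rightarrow\,$\eqref{ineq2}, the idea is to plug a smooth approximation $\phi_\varepsilon\in C^1_c([0,T))$ of $\chi_{[s,t]}$ into \eqref{ineq1}, namely a trapezoidal cutoff that rises from $0$ to $1$ on $[s-\varepsilon, s]$, equals $1$ on $[s, t]$, and drops back to $0$ on $[t, t+\varepsilon]$. At Lebesgue points of $g$, the Lebesgue differentiation theorem yields $-\int_0^T \phi_\varepsilon' g\dd\tau \to g(t) - g(s)$, while dominated convergence gives $\int_0^T \phi_\varepsilon f\dd\tau \to \int_s^t f\dd\tau$; since $\phi_\varepsilon(0) = 0$ whenever $\varepsilon < s$, the $g_0$ term drops out and \eqref{ineq2} emerges for a.e.\ $s < t$. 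The boundary case $s = 0$ is recovered by taking $\phi_\varepsilon \equiv 1$ on $[0, t]$ so that $\phi_\varepsilon(0) = 1$, which produces precisely the replacement of $g(0)$ by $g_0$.

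The BV representative of $g$ is then read off as $g = \bar G - \int_0^\cdot f\dd\tau$, a right-continuous non-increasing function minus an absolutely continuous one, and the one-sided inequalities $\lim_{s\searrow 0} g(s)\leq g_0$ and $\lim_{s\searrow t} g(s)\leq \lim_{s\nearrow t}g(s)$ follow from the monotonicity of $\bar G$ combined with \eqref{ineq2}. The main subtlety I anticipate is the bookkeeping at the endpoint $\tau = 0$: the prescribed datum $g_0$ and the essential trace $g(0^+)$ need not coincide, and it is precisely the $-\phi(0)g_0$ term in \eqref{ineq1} that carries this initial datum and that has to be matched against the boundary contribution $\phi(0)[\bar G(0)-g_0]$ produced by the Stieltjes integration by parts in the forward direction.
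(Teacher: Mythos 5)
Your proof is correct; the paper itself gives no argument but defers to \cite[Lemma 2.5]{EitLas24envarhyp}, and your route --- passing to $G(t)=g(t)+\int_0^t f$, extracting a right-continuous non-increasing representative, Stieltjes integration by parts for one direction and smoothed approximations of $\chi_{[s,t]}$ at Lebesgue points for the other --- is exactly the standard argument used there. The only cosmetic point is that the trapezoidal cutoffs are merely Lipschitz and must be mollified to lie in $C^1_c([0,T))$, which does not affect the limits.
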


\begin{proof}
See~\cite[Lemma 2.5]{EitLas24envarhyp}.
\end{proof}

\begin{remark}\label{rem:weakform.variational}
The formulation~\eqref{eq:weakform.original}
is the basis of the solution concepts introduced 
in Section~\ref{sec:existence}.
To avoid the pointwise evaluation at time $s$ and $t$, 
we can also use a time-variational form
that is more accessible by convergence arguments later. 
By Lemma~\ref{lem:ineq.pointwisevariational},
the inequality~\eqref{eq:weakform.original} holds for a.a.~$s<t$ if and only if
\begin{equation}
\label{eq:weakform.original.variational}
\begin{aligned}
&\int_0^T-\partial_t\phi\Lb{\calE(\bv,\bbE)-\int_\Omega\rho\bv\cdot\psi+\bbE{\,:\,}\Psi\dd x}\dd t
+\int_0^T\phi\int_\Omega \bbS(\bv){\,:\,}\bbD(\bv)-\bf\cdot\bv \dd x \dd t
\\
&\quad
+\int_0^T\phi\int_\Omega
\rho\bv\cdot\partial_t\psi
+\frac{\rho}{2}(\dv\bv)(\bv\cdot\psi)
+\rho (\bv\otimes\bv):\nabla\psi \dd x \dd t
\\
&\quad+\int_0^T\phi\int_\Omega
- \lp{\bbS(\bv)+D\varphi(\bbE)}:\bbD(\psi)
-\varphi(\bbE)\dv\psi 
+\bf\cdot\psi\dd x \dd t
\\
&\quad
+\int_0^T\phi\int_\Omega
\bbE:\partial_t\Psi
+(\dv\bv)\bbE:\Psi+\bv\cdot\nabla \Psi:\bbE - (\bbE\bbW(\bv)-\bbW(\bv)\bbE):\Psi
 \dd x \dd t
\\
&\quad+\int_0^T\phi(t)\int_\Omega
\bbD(\bv):\Psi  \dd x \dd t'
+\int_0^T\phi\lp{\calP(\dev D\varphi(\bbE)) -\calP(\dev\Psi)}\dd t
\\
&\quad-\phi(0)\Lb{\calE(\bv_0,\bbE_0)-\int_\Omega\rho\bv_0\cdot\psi(0)+\bbE_0:\Psi(0)\dd x}\leq 0
\end{aligned}
\end{equation}
holds for all $\phi \in C^1_c ([0,T))$ with $\phi \geq 0$.
We further obtain
that
\[
\begin{aligned}
\lim_{s\nearrow 0}\Lb{\calE(\bv(s),\bbE(s))-\int_\Omega\rho\bv(s)\cdot\psi+\bbE(s){\,:\,}\Psi\dd x}
&=\Lb{\calE(\bv_0,\bbE_0)-\int_\Omega\rho\bv_0\cdot\psi+\bbE_0{\,:\,}\Psi\dd x}
\end{aligned}
\]
for all $\psi\in C_c^1(\ol\Omega;\R^3)$
and $\Psi\in C_c^1(\ol\Omega;\R_{\sym}^{3\times 3})$
with $\psi=0$ on $(0,T)\times\Gamma_{\rmD}$ and 
$\psi\cdot\bn=0$ on $(0,T)\times\Gamma_{\rmN}$.
Varying $\psi$ and $\Psi$, 
a standard variational argument yields
that $\calE(\bv(s),\bbE(s))\to\calE(\bv_0,\bbE_0)$ in $\R$ and
$(\bv(s),\bbE(s))\tows(\bv_0,\bbE_0)$ weakly*
in the dual space of the test functions. 
In this sense, the initial condition~\eqref{eq:initial} is satisfied.
A similar argument shows that $(\bv,\bbE)$ is weaky* continuous
in this functional framework.
\end{remark}

\section{The generalized solution concepts}
\label{sec:existence}

We first collect the main assumptions 
to develop the theory.
Subsequently, we introduce the concepts of 
weak solutions to~\eqref{eq:system.reg}
and of energy-variational solutions to~\eqref{eq:system},
and we state the main results on 
the global-in-time existence of such solutions.

\subsection{General assumptions}
To perform a rigorous existence analysis, we make the following assumptions:
\begin{enumerate}[label=(A\arabic*)]
\item
\label{ass:Omega}
Let $\Omega\subset\R^3$ be a domain with Lipschitz boundary $\partial\Omega$, 
and let $\Gamma_D\subset\partial\Omega$ be measurable.
If $\Gamma_D$ has vanishing surface measure, assume in addition that 
there is no rigid motion that leaves $\Omega$ invariant.
\item
\label{ass:f}
Let $\bf\in L^1(0,T;L^2(\Omega;\R^3))$.
\item
\label{ass:viscstress}
Let the linear viscous stress $\bbS(\bv)$ be given by $\bbS(\bv)=\bbL[\bbD(\bv)]$
for a linear mapping $\bbL\colon\R^{3\times3}_{\sym}\to\R^{3\times3}_{\sym}$
such that there exists $\mu_*>0$ with
$\bbL[\bbA]:\bbA\geq \mu_*|\bbA|^2$.
\item
\label{ass:P}
Let $\calP\colon L^1(\Omega;\R^{3\times3}_{\sym,0})\to[0,\infty]$ be nonnegative,
convex and lower semicontinuous, and let $\calP(0)=0$.
\item
\label{ass:phi}
Let $\varphi\in C^3(\R^{3\times 3}_{\sym};[0,\infty))$ be a nonnegative, convex function
such that 
\begin{enumerate}[label=(\roman*)]
\item
\label{ass:phi.strconv}
$\varphi$ is strongly convex, that is, there exists $\kappa>0$ such that $\varphi-\frac{\kappa}{2}|\cdot|^2$ is convex,
\item
\label{ass:phi.quadrgrowth}
$\varphi$ has at least quadratic growth, that is,
there exists $\beta>0$ such that it holds $1+\varphi(\bbE)\geq \beta|\bbE|^2$ 
for all $\bbE\in\R^{3\times 3}_{\sym}$,
\item
\label{ass:phi.D3phibounded}
there exists $M>0$ such that $|D^3\varphi(\bbE)|\leq M$ for all $\bbE\in\R^{3\times 3}_{\sym}$,
\item
\label{ass:phi.commute}
it holds $D\varphi(\bbE)\,\bbE=\bbE\,D\varphi(\bbE)$ for all $\bbE\in\R^{3\times3}_{\sym}$.
\end{enumerate}
\item
\label{ass:PDphi}
Let the function $\calP\circ [\dev D\varphi]\colon 
L^2(\Omega;\R^{3\times 3}_{\sym}) \to [0,\infty]$,
given by
$\bbE\mapsto\calP(\dev D\varphi(\bbE))$, be convex and lower semicontinuous.
\end{enumerate}

While these assumptions are sufficient to show the existence of energy-variational solutions to the original system~\eqref{eq:system},
we need to restrict to quadratic stored energy potentials $\varphi$
and to require an additional property of $\calP$ 
to obtain weak solutions for the regularized system~\eqref{eq:system.reg}:
\begin{enumerate}[label=(A\arabic*')]
\setcounter{enumi}{4}
\item
\label{ass:phi.quadratic}
Let $\varphi$ be quadratic and strictly convex, that is, 
$\varphi(\bbE)=\frac{1}{2}\bbK[\bbE]\mathbin:\bbE$
for some linear function
$\bbK\colon\R^{3\times3}_{\sym}\to\R^{3\times3}_{\sym}$ that satisfies
$\bbK[\bbA]\mathbin:\bbB=\bbK[\bbB]\mathbin{:}\bbA$ and $\bbK[\bbA]\mathbin:\bbA\geq\kappa \abs{\bbA}^2$
for all $\bbA,\bbB\in\R^{3\times3}_{\sym}$ and some $\kappa>0$.
\item
\label{ass:P.gamma}
There exists $R_0>0$ such that $\calP(\Psi)<\infty$
for all $\Psi\in\calY^\bbE_\gamma$ with $\norm{\Psi}_{\calY^{\bbE}}\leq R_0$.
\end{enumerate}

The additional geometric condition in~\ref{ass:Omega}
has to be understood in combination with~\ref{ass:viscstress}.
Both assumptions together imply the Korn-type inequality
\begin{equation}\label{eq:Korn}
\int_\Omega \bbS(\bv):\bbD(\bv) \dd x 
\geq  \mu \int_\Omega |\nabla\bv|^2\dd x
\end{equation}
for some constant $\mu>0$ and
all $\bv\in H^1(\Omega;\R^3)$
with $\bv=0$ on $\Gamma_{\rmD}$ and $\bv\cdot\bn=0$ on $\partial\Omega$.
Indeed, by~\ref{ass:viscstress} we ensure that
\[
\bbS(\bv):\bbD(\bv)=\bbL[\bbD(\bv)]:\bbD(\bv)\geq \mu_* |\bbD(\bv)|^2,
\]
which allows to conclude~\eqref{eq:Korn} by a classical compactness argument.
However, to this end,
we have to exclude that $\bv$ is a rigid motion, 
which results from the boundary conditions if~\ref{ass:Omega} holds.

The bound on $D^3\varphi$ in~\ref{ass:phi}\ref{ass:phi.D3phibounded}
will be used at several points in the argument.
Firstly, it ensures that $D\varphi(\bbE)\in L^1(\Omega;\R^{3\times3})$
for $\bbE\in L^2(\Omega;\R^{3\times3})$ by Taylor's theorem.
Therefore, the term $D\varphi(\bbE):\bbD(\psi)$ in the weak formulation is integrable,
and this explains the domain of $\calP$ in~\ref{ass:P}.
Moreover,
we can `convexify' this term 
with the help of the energy, 
which will be crucial for the proposed construction of approximate solutions
and for the derivation of energy-variational solutions,
see Lemma~\ref{lem:Kconvexifies} and Lemma~\ref{lem:K.giveslsc} below.

Assumption~\ref{ass:phi}\ref{ass:phi.commute} means that the elastic strain $\bbE$ commutes with the stress $\bbT=D\varphi(\bbE)+\varphi(\bbE)\bbI$. 
In particular, this is the case for an isotropic material,
which comes along with $\varphi(\bbQ\bbE\bbQ^\top)=\varphi(\bbE)$ 
for all rotational matrices 
$\bbQ\in \mathrm{SO}(3)$.
Indeed, $\varphi$ can be expressed as a function of the principal
invariants of $\bbE$ in this case, so that 
$D\varphi(\bbE)=a\bbI+b\bbE+c\bbE^2$ for scalars $a,b,c$
that are functions of the principal invariants of $\bbE$,
compare~\cite[Sect.\,37]{Gurtin1981IntoContMech}.

Clearly, the potential from~\eqref{eq:disspot.example}
modeling visco-plasticity satisfies~\ref{ass:P}.
Note that 
we do not impose any growth assumptions on $\calP$ in~\ref{ass:P},
and we shall not use $\calP$ to derive additional bounds on the solutions.
Therefore, even the choice $\calP=0$ is possible,
which would lead to pure elastic strain
and reduce~\eqref{eq:system} to a model of Kelvin--Voigt type.

Assumption~\ref{ass:PDphi} concerns the interplay between the 
energy potential $\varphi$ and the dissipation potential $\calP$.
Notice that in the case of a quadratic stored elastic energy $\varphi$ as in~\ref{ass:phi.quadratic},
which satisfies~\ref{ass:phi} trivially,
assumption~\ref{ass:PDphi} is a direct consequence of~\ref{ass:P}.
The most relevant example for such a quadratic $\varphi$ is 
given in~\eqref{eq:storedenergy.example},
which allows for different elastic shear and bulk modulus.
We shall only use the strict assumption~\ref{ass:phi.quadratic}
if $\gamma>0$, 
where it renders the stress diffusion a linear term.

The technical assumption~\ref{ass:P.gamma} is used
to derive a suitable incremental bound for the elastic strain $\bbE$
in the case $\gamma>0$, see Lemma~\ref{lem:increment.est.E} below. 
This estimate will lead to strong convergence of the approximate sequence for $\bbE$,
which allows to pass to the limit in a weak formulation.
As strong convergence cannot be expected for $\gamma=0$,
this estimate is not necessary in this case,
and the limit object is identified as an energy-variational solution.

\subsection{Weak solutions to the system with stress diffusion}
\label{subsec:weaksol}

We introduce the notion of weak solutions to the regularized system~\eqref{eq:system.reg}
with $\gamma>0$.

\begin{definition}
\label{def:weaksol.gamma}
Let $\bv_0\in L^1(\Omega;\R^3)$ and $\bbE_0\in L^1(\Omega;\R^{3\times3})$  
such that $\calE(\bv_0,\bbE_0)<\infty$.
We call $(\bv,\bbE)$ a \textit{weak solution} to~\eqref{eq:system.reg}
if 
\[
\begin{aligned}
&\bv\in L^\infty(0,T;L^2(\Omega;\R^3))\cap L^2(0,T;H^1(\Omega;\R^3)),
\\
&\bbE\in L^\infty(0,T;L^2(\Omega;\R^{3\times 3}_{\sym}))\cap L^2(0,T;H^1(\Omega;\R^{3\times 3}_{\sym})),
\\
&\varphi(\bbE)\in L^\infty(0,T;L^1(\Omega)),
\quad
D\varphi(\bbE)\in L^2(0,T;H^1(\Omega;\R^{3\times 3}_{\sym})),
\end{aligned}
\]
if $\bv=0$ on $(0,T)\times\Gamma_{\rmD}$, 
if $\bv\cdot \bn=0$ on $(0,T)\times\Gamma_{\rmN}$, 
and if it holds
\begin{equation}
\label{eq:weakform.reg}
\begin{aligned}
&\Lb{\calE(\bv,\bbE)-\int_\Omega\rho\bv\cdot\psi+\bbE:\Psi\dd x}\Big|_s^t
+\int_s^t\int_\Omega \bbS(\bv):\bbD(\bv)+\gamma\absl{\nabla [D\varphi(\bbE)]}^2-\bf\cdot\bv \dd x \dd t'
\\
&\quad
+\int_s^t\int_\Omega
\rho\bv\cdot\partial_t\psi
+\frac{\rho}{2}(\dv\bv)(\bv\cdot\psi)
+\rho (\bv\otimes\bv):\nabla\psi \dd x \dd t'
\\
&\quad+\int_s^t\int_\Omega
- \lp{\bbS(\bv)+D\varphi(\bbE)}:\bbD(\psi)
-\varphi(\bbE)\dv\psi 
+\bf\cdot\psi\dd x \dd t'
\\
&\quad
+\int_s^t\int_\Omega
\bbE:\partial_t\Psi
+(\dv\bv)\bbE:\Psi+\bv\cdot\nabla \Psi:\bbE - (\bbE\bbW(\bv)-\bbW(\bv)\bbE):\Psi
 \dd x \dd t'
\\
&\quad+\int_s^t\int_\Omega
\bbD(\bv):\Psi - \gamma \nabla[D\varphi(\bbE)]\tcolon\nabla\Psi  \dd x \dd t'
+\int_s^t\calP(\dev D\varphi(\bbE)) -\calP(\dev\Psi)\dd t'\leq 0
\end{aligned}
\end{equation}
for a.a.~$s,t\in[0,T]$ with $s<t$,
and all $\psi\in C_c^1([0,T)\times\ol\Omega;\R^3)$
and $\Psi\in C_c^1([0,T)\times\ol\Omega;\R_{\sym}^{3\times 3})$
such that $\psi=0$ on $(0,T)\times\Gamma_{\rmD}$ and 
$\psi\cdot\bn=0$ on $(0,T)\times\Gamma_{\rmN}$.
The initial values $(\bv_0,\bbE_0)$ are attained in the sense that~\eqref{eq:weakform.reg}
is satisfied for $s=0$ and a.e.~$t\in[0,T]$ with $\bv(0)=\bv_0$ and $\bbE(0)=\bbE_0$.
\end{definition}

By setting $(\psi,\Psi)=0$ and $s=0$ in~\eqref{eq:weakform.reg},
we see that weak solutions to~\eqref{eq:system.reg} satisfy
the energy-dissipation inequality
\begin{equation}
\label{eq:enin.reg}
\begin{aligned}
\calE(\bv(t),\bbE(t))
+\int_0^t\int_\Omega \bbS(\bv):\bbD(\bv)+\gamma\absl{\nabla[D\varphi(\bbE)]}^2-\bf\cdot\bv \dd x \dd t'
\quad&
\\
+\int_0^t\calP(\dev D\varphi(\bbE))\dd t'&
\leq \calE(\bv_0,\bbE_0).
\end{aligned}
\end{equation}

Moreover, it follows from the definition
and Lemma~\ref{lem:ineq.pointwisevariational}
that any weak solution $(\bv,\bbE)$ is weakly* continuous 
in the dual of the test-function space,
and it attains the initial values in this sense,
compare Remark~\ref{rem:weakform.variational}.

\begin{remark}
As the test function $\psi$ appears in~\eqref{eq:weakform.reg}
in a linear way, 
we can use a standard variational argument to retain the weak form
of the momentum equation. 
Indeed, replacing $\psi$ with $\pm\lambda\psi$ in~\eqref{eq:weakform.reg},
dividing by $\lambda>0$, and passing to the limit $\lambda\to\infty$,
we obtain
\begin{equation}
\label{eq:weakform.momentum}
\begin{aligned}
&\Lb{-\int_\Omega\rho\bv\cdot\psi\dd x}\Big|_s^t
+\int_s^t\int_\Omega
\rho\bv\cdot\partial_t\psi
+\frac{\rho}{2}(\dv\bv)(\bv\cdot\psi)
+\rho (\bv\otimes\bv):\nabla\psi \dd x \dd t'
\\
&\quad+\int_s^t\int_\Omega
- \lp{\bbS(\bv)+D\varphi(\bbE)}:\bbD(\psi)
-\varphi(\bbE)\dv\psi 
+\bf\cdot\psi\dd x \dd t'=0.
\end{aligned}
\end{equation}
for a.a.~$s<t$ and all $\psi$ as in Definition~\ref{def:weaksol.gamma}.
Subtracting this identity from~\eqref{eq:weakform.reg},
we obtain a combination of an energy-dissipation inequality with 
the weak form of the regularized strain-rate equation~\eqref{eq:sys.strain.reg}.
In contrast to $\psi$, the function $\Psi$ appears in~\eqref{eq:weakform.reg}
in a nonlinear way due to the potential $\calP$.
Therefore, the previous argument cannot be applied 
and the strain-rate equation cannot be separated
in general.
Clearly, this is not an issue if $\calP=0$,
which would allow two separate both momentum and strain-rate equation from the energy-dissipation inequality.
\end{remark}

The introduction of diffusion in the stress-strain relation~\eqref{eq:sys.strain.reg} 
regularizes the system~\eqref{eq:system.reg}
and allows us to show the existence of global-in-time weak solutions.
Observe that the proposed method only allows to consider quadratic dissipation potentials $\varphi$
as in assumption~\ref{ass:phi.quadratic},
which yields $D\varphi(\bbE)=\bbK[\bbE]$ and
thus renders the stress-diffusion terms in~\eqref{eq:enin.reg} convex.

\begin{theorem}
\label{thm:existence.gamma}
Let $\gamma>0$, and let \ref{ass:Omega}--\ref{ass:P},~\ref{ass:phi.quadratic}, \ref{ass:P.gamma}
be satisfied.
For every $\bv_0\in L^1(\Omega;\R^3)$ and $\bbE_0\in L^1(\Omega;\R^{3\times3}_{\sym})$  
such that $\calE(\bv_0,\bbE_0)<\infty$,
there exists a weak solution $(\bv,\bbE)$ to~\eqref{eq:system.reg}
in the sense of Defintion~\ref{def:weaksol.gamma}.
\end{theorem}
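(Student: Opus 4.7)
The plan is to construct a weak solution via the time-discrete saddle-point scheme announced in the introduction and then take the limit $N\to\infty$. The quadratic structure~\ref{ass:phi.quadratic} gives $D\varphi(\bbE)=\bbK[\bbE]$, so the stress-diffusion contribution $\gamma\abs{\nabla\bbK[\bbE]}^2$ appearing in~\eqref{eq:enin.reg} is quadratic and convex in $\bbE$, yielding an $L^2(0,T;H^1)$ bound on $\bbE$ that is absent when $\gamma=0$. Together with the incremental estimate afforded by~\ref{ass:P.gamma}, this is what makes strong compactness of $\bbE_N$ available and lifts the limit object from an energy-variational solution to a bona fide weak solution in the sense of Definition~\ref{def:weaksol.gamma}.

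First I would fix $N\in\mathbb{N}$ and $\tau=T/N$ and, given $(\bv_N^{k-1},\bbE_N^{k-1})$, construct $(\bv_N^k,\bbE_N^k)$ as a saddle point of a backward-Euler discretization of the variational inequality~\eqref{eq:weakform.reg}. The primal part of the functional adds up the kinetic increment $\frac{\rho}{2\tau}\norm{\bv-\bv_N^{k-1}}_{L^2}^2$, the stored energy $\int_\Omega\varphi(\bbE)\dd x$ rewritten in convexified form via Lemma~\ref{lem:Kconvexifies}, the quadratic viscous and stress-diffusion dissipations multiplied by $\tau$, and the term $\tau\calP(\dev\bbK[\bbE])$, while the Jaumann and convective couplings enter linearly through a dual variable in a minimax formulation. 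Under~\ref{ass:viscstress}, \ref{ass:phi.quadratic} and~\ref{ass:P}, this problem is convex-concave and coercive on the natural Hilbert space (coercivity in $\bv$ coming from the Korn inequality~\eqref{eq:Korn}), and~\ref{ass:P.gamma} ensures that the dual variable has a nonempty admissible range; a Ky Fan-type theorem then furnishes a saddle point that satisfies a discrete analogue of~\eqref{eq:weakform.reg}.

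Testing this discrete inequality with $(\bv_N^k,\bbK[\bbE_N^k])$ yields a discrete energy-dissipation inequality in which the plastic term is absorbed as nonnegative dissipation. Summation over $k$ and a Gronwall argument give, for the piecewise-constant and piecewise-affine interpolants, uniform bounds of $\bv_N$ in $L^\infty(0,T;L^2)\cap L^2(0,T;H^1)$, of $\bbE_N$ and $\bbK[\bbE_N]$ in $L^\infty(0,T;L^2)\cap L^2(0,T;H^1)$, of $\varphi(\bbE_N)$ in $L^\infty(0,T;L^1)$, and of $\int_0^T\calP(\dev\bbK[\bbE_N])\dd t$. An incremental estimate on $\bv_N^k-\bv_N^{k-1}$ in a negative Sobolev norm coming from the discrete momentum equation gives strong $L^2(0,T;L^2)$-convergence of $\bv_N$ by Aubin--Lions, while~\ref{ass:P.gamma} allows to test the discrete strain-rate inclusion with a bounded element of the effective domain of $\calP$, yielding an incremental bound on $\bbE_N^k-\bbE_N^{k-1}$ which, combined with the $L^2(0,T;H^1)$ bound, gives strong $L^2(0,T;L^2)$-convergence of $\bbE_N$ as well (this is the content of the forthcoming Lemma~\ref{lem:increment.est.E}).

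With these convergences the passage to the limit in~\eqref{eq:weakform.reg} is routine: strong $L^2$-convergence of $\bv_N$ handles $\bv_N\otimes\bv_N$ and $(\dv\bv_N)(\bv_N\cdot\psi)$; strong $L^2$-convergence of $\bbE_N$ together with continuity of $\varphi$ handles $\varphi(\bbE_N)$ and $\bbK[\bbE_N]$, and the cross terms such as $\bbE_N\bbW(\bv_N)$ and $(\dv\bv_N)\bbE_N$ by combining strong convergence of one factor with weak convergence of the other; the quadratic viscous and diffusive terms pass to the liminf by weak lower semicontinuity, and the $\calP$-term does so thanks to~\ref{ass:PDphi}. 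Attainment of the initial data follows by evaluating the discrete inequality at $s=0$ and invoking Lemma~\ref{lem:ineq.pointwisevariational}, as in Remark~\ref{rem:weakform.variational}. The main obstacle I expect is the strong convergence of $\bbE_N$: the non-smoothness of $\calP$ precludes any pointwise bound on $\jder{\bbE_N}$ from the discrete inclusion, and one needs precisely the technical device of~\ref{ass:P.gamma} to recover a useful incremental estimate; a related delicate point is that, again because of $\calP$, one cannot separate a weak form of the strain-rate equation from the variational inequality~\eqref{eq:weakform.reg} and must pass to the limit in the full combined object at once.
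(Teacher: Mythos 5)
Your proposal follows essentially the same route as the paper: an implicit time-discrete saddle-point scheme convexified by a regularity weight (Lemma~\ref{lem:Kconvexifies}, Lemma~\ref{lem:saddlepoint.existence}), the discrete energy-dissipation inequality and Gronwall bounds, the Dreher--J\"ungel/Aubin--Lions compactness for $\bv_N$ and, via the incremental estimate of Lemma~\ref{lem:increment.est.E} enabled by~\ref{ass:P.gamma}, for $\bbE_N$, followed by a limit passage using strong convergence for the nonlinear couplings and weak lower semicontinuity for the dissipation and $\calP$. The only minor inaccuracy is your first remark that~\ref{ass:P.gamma} guarantees a nonempty admissible range for the dual variable (the constraint set always contains $(0,0)$); its actual and only role, which you correctly identify afterwards, is the incremental bound on $\bbE_N$.
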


For the proof of Theorem~\ref{thm:existence.gamma}, we refer to Subsect.\,\ref{subsec:limit.weak}.

\subsection{Energy-variational solutions to the original system}
\label{subsec:envarsol}

For the original system~\eqref{eq:system},
the existence of weak solutions in the sense of Definition~\ref{def:weaksol.gamma}
cannot be expected. 
Instead, we study energy-variational solutions.
For their definition, we do not follow~\cite{EiterHopfLasarzik2023},
but we adapt the refined energy-variational formulation 
from~\cite{EitLas24envarhyp,ALR24envarvisc,EiterLasarzikSliwinski_exselenvar}.

\begin{definition}
\label{def:envar}
Let $\bv_0\in L^1(\Omega;\R^3)$ and $\bbE_0\in L^1(\Omega;\R^{3\times3})$  
such that $\calE(\bv_0,\bbE_0)<\infty$.
We call a triple $(\bv,\bbE,E)$ 
\textit{an energy-variational solution}
to~\eqref{eq:system}
if 
\[
\begin{aligned}
&\bv\in L^\infty(0,T;L^2(\Omega;\R^3))\cap L^2(0,T;H^1(\Omega;\R^3)),
\\
&\bbE\in L^\infty(0,T;L^2(\Omega;\R^{3\times 3}_{\sym})),
\\
&E\in \BV([0,T]),
\end{aligned}
\]
if $\bv=0$ on $(0,T)\times\Gamma_{\rmD}$, 
if $\bv\cdot \bn=0$ on $(0,T)\times\Gamma_{\rmN}$, 
if $\calE(\bv,\bbE)\leq E$ a.e.~in $(0,T)$,
and if there exists a function $\calK\colon C^1(\ol\Omega;\R^3\times\R^{3\times3}_{\sym})\to[0,\infty)$
such that
\begin{equation}
\label{eq:envarform}
\begin{aligned}
&\Lb{E-\int_\Omega\rho\bv\cdot\psi+\bbE:\Psi\dd x}\Big|_s^t
+\int_s^t\int_\Omega \bbS(\bv):\bbD(\bv)-\bf\cdot\bv
\dd x\dd t'
\\
&\quad
+\int_s^t\int_\Omega
\rho\bv\cdot\partial_t\psi
+\frac{\rho}{2}(\dv\bv)(\bv\cdot\psi)+\rho (\bv\otimes\bv):\nabla\psi \dd x \dd t'
\\
&\quad
+\int_s^t\int_\Omega
- \lp{\bbS(\bv)+D\varphi(\bbE)}:\bbD(\psi)
-\varphi(\bbE)\dv\psi +\bf\cdot\psi \dd x \dd t'
\\
&\quad
+\int_s^t\int_\Omega
\bbE:\partial_t\Psi
+(\dv\bv)\bbE:\Psi+\bv\cdot\nabla \Psi:\bbE - (\bbE\bbW(\bv)-\bbW(\bv)\bbE):\Psi
 \dd x \dd t'
\\
&\quad+\int_s^t\int_\Omega
\bbD(\bv):\Psi \dd x \dd t'
+\int_s^t \calP(\dev D\varphi(\bbE))-\calP(\dev\Psi) \dd t'
\\
&\ 
\leq \int_s^t \mathcal K(\psi,\Psi)\lp{E-\calE(\bv,\bbE)}\dd t'
\end{aligned}
\end{equation}
for a.a.~$s,t\in[0,T]$ with $s<t$ 
and all $\psi\in C_c^1([0,T)\times\ol\Omega;\R^3)$
and $\Psi\in C_c^1([0,T)\times\ol\Omega;\R_{\sym}^{3\times 3})$
such that $\psi=0$ on $(0,T)\times\Gamma_{\rmD}$ and 
$\psi\cdot\bn=0$ on $(0,T)\times\Gamma_{\rmN}$.
\end{definition}

Compared to the weak formulation~\eqref{eq:weakform.original},
there are two major differences to~\eqref{eq:envarform}.
Firstly, there occurs the new energy variable $E$, which dominates 
the state-dependent energy $\calE(\bv,\bbE)$ and 
replaces it on the left-hand side of the inequality.
Secondly, there is a non-zero term on the right-hand side,
composed of the product of the \textit{energy defect} $E-\calE(\bv,\bbE)\geq 0$
and the \textit{regularity weight} $\calK$.
In other words, the energy-variational formulation~\eqref{eq:envarform}
coincides with the weak formulation~\eqref{eq:weakform.original}
up to an error that is controlled by the energy defect.
In particular, if $E=\calE(\bv,\bbE)$, then
both formulations coincide,
so that every weak solution is also an energy-variational solution.

Invoking Lemma~\ref{lem:ineq.pointwisevariational},
we obtain directly from the definition that
the state variables $\bv$, $\bbE$ of an energy-variational solution are weakly* continuous with respect to time.
Here, duality is to be understood with respect to
the space of test functions.
Moreover, $(\bv,\bbE)$ attains the initial values in this sense,
compare Remark~\ref{rem:weakform.variational}.

Clearly, the set of energy-variational solutions depends on $\calK$,
and it becomes larger if we increase $\calK$.
It thus seems reasonable find $\calK$
as small as possible such that solutions exist. 
To this end,
we consider the regularity weight 
defined by
\begin{equation}
\label{eq:K.lsc}
\calK(\psi,\Psi)
=\frac{M}{\kappa}\norm{\nabla\psi}_{L^\infty(\Omega)}
+\norm{\dv\psi}_{L^\infty(\Omega)}
+\frac{5}{\kappa\mu}\norm{\Psi}_{L^\infty(\Omega)}^2.
\end{equation}
As we shall see in Lemma~\ref{lem:K.giveslsc} below,
this choice is appropriate to allow for a limit passage in~\eqref{eq:envarform}.
Note that for the construction of approximate solutions 
in Section~\ref{sec:approximation},
we will use a different regularity weight $\wt{\calK}$ 
that plays a similar role and yields suitably convexity properties,
see Lemma~\ref{lem:Kconvexifies} below.

\begin{theorem}
\label{thm:existence.envar}
Let \ref{ass:Omega}--\ref{ass:PDphi} be satisfied.
Let $\bv_0\in L^1(\Omega;\R^3)$ and $\bbE_0\in L^1(\Omega;\R^{3\times3})$  
such that $\calE(\bv_0,\bbE_0)<\infty$.
Then there exists an energy-variational solution $(\bv,\bbE)$ to~\eqref{eq:system.reg}
in the sense of Defintion~\ref{def:envar},
which satisfies $\lim_{t\searrow0}E(t)=\calE(\bv_0,\bbE_0)$,
and the associated regularity weight $\calK$ is given by~\eqref{eq:K.lsc}.
\end{theorem}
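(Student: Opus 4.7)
The plan is to obtain the energy-variational solution as the time-continuous limit ($N \to \infty$) of piecewise constant interpolants $(\bv_N, \bbE_N, E_N)$ constructed by the time-discrete saddle-point scheme of Section~\ref{sec:approximation}. Fix a uniform step $\tau = T/N$; at each level $k \to k+1$ one solves a convex--concave saddle-point problem obtained from a time-discrete analogue of~\eqref{eq:envarform}, with an \emph{approximation-level} regularity weight $\wt\calK$ furnished by Lemma~\ref{lem:Kconvexifies} (which may differ from the~\eqref{eq:K.lsc} used in the final formulation, in the spirit of~\cite{EiterLasarzikSliwinski_exselenvar}). Solvability at each step reduces to a standard minimax argument, using the strong convexity of $\varphi$ from~\ref{ass:phi}\ref{ass:phi.strconv}, the convexity of $\calP\circ[\dev D\varphi]$ from~\ref{ass:PDphi}, Korn's inequality~\eqref{eq:Korn}, and the bound on $D^3\varphi$ from~\ref{ass:phi}\ref{ass:phi.D3phibounded} used to ``convexify'' the $D\varphi(\bbE):\bbD(\psi)$ term by means of the stored energy. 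The discrete energy variable $E_N$ is built into the scheme so that $\calE(\bv_N,\bbE_N)\le E_N$ and a telescoping energy--dissipation inequality holds at the discrete level.

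Testing the discrete saddle-point inequality with the trivial test pair $(\psi,\Psi)=0$, telescoping in $k$, and applying Gronwall together with Korn~\eqref{eq:Korn} delivers the following bounds, uniform in $N$: $\bv_N$ is bounded in $L^\infty(0,T;L^2)\cap L^2(0,T;H^1)$, $\varphi(\bbE_N)$ in $L^\infty(0,T;L^1)$ and hence $\bbE_N$ in $L^\infty(0,T;L^2)$ via~\ref{ass:phi}\ref{ass:phi.quadrgrowth}, $\int_0^T \calP(\dev D\varphi(\bbE_N))\dd t$ is bounded, and $E_N$ is bounded in $\BV([0,T])$. Passing to a subsequence, Banach--Alaoglu gives $\bv_N\tows\bv$ and $\bbE_N\tows\bbE$ in the indicated weak-$*$ senses, while Helly's selection theorem yields pointwise a.e.~convergence $E_N\to E$ with $E\in\BV([0,T])$.

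The main obstacle is the complete absence of strong compactness for $\bbE_N$ (no diffusion in the case $\gamma=0$), so the nonlinear expressions $D\varphi(\bbE_N):\bbD(\psi)$, $\varphi(\bbE_N)\dv\psi$, $\calP(\dev D\varphi(\bbE_N))$, $(\bbE_N\bbW(\bv_N)-\bbW(\bv_N)\bbE_N):\Psi$, $\bv_N\cdot\nabla\Psi:\bbE_N$, and $(\dv\bv_N)\bbE_N:\Psi$ cannot be identified by standard weak-continuity arguments. This is precisely where the energy-variational formulation is designed to help: the whole left-hand side of~\eqref{eq:envarform}, viewed as a functional of $(\bv,\bbE,E)$ for fixed $(\psi,\Psi)$, is weakly-$*$ lower semicontinuous \emph{up to} the defect term $\int_s^t \calK(\psi,\Psi)\lp{E-\calE(\bv,\bbE)}\dd t'$. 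This is the content of Lemma~\ref{lem:K.giveslsc}, and the explicit form~\eqref{eq:K.lsc} is dictated by three computations: the $\frac{M}{\kappa}\|\nabla\psi\|_\infty$ term absorbs the quadratic Taylor remainder of $D\varphi$ using~\ref{ass:phi}\ref{ass:phi.D3phibounded} and~\ref{ass:phi}\ref{ass:phi.strconv}; the $\|\dv\psi\|_\infty$ term handles $\varphi(\bbE)\dv\psi$ via the energy; and the $\frac{5}{\kappa\mu}\|\Psi\|_\infty^2$ term controls the rotational and transport contributions $(\bbE\bbW(\bv)-\bbW(\bv)\bbE):\Psi$, $(\dv\bv)\bbE:\Psi$, $\bv\cdot\nabla\Psi:\bbE$ after using Korn~\eqref{eq:Korn}. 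The defect term for $\calP(\dev D\varphi(\bbE_N))$ is dealt with via lower semicontinuity of $\calP\circ[\dev D\varphi]$ from~\ref{ass:PDphi}.

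Combining these ingredients, one passes to $\liminf_{N\to\infty}$ in the discrete variational inequality, obtaining~\eqref{eq:envarform} for a.a.~$s<t$ with the weight $\calK$ from~\eqref{eq:K.lsc}; the inequality $\calE(\bv,\bbE)\le E$ follows from weak lower semicontinuity of $\calE$ combined with $\calE(\bv_N,\bbE_N)\le E_N$. The initial condition $\lim_{t\searrow 0}E(t)=\calE(\bv_0,\bbE_0)$ is obtained by matching the upper bound $\limsup_{t\searrow 0}E(t)\le \calE(\bv_0,\bbE_0)$, inherited from the discrete energy decrease (with forcing contribution vanishing as $t\to 0$), against the lower bound $\liminf_{t\searrow 0}E(t)\ge \liminf_{t\searrow 0}\calE(\bv(t),\bbE(t))\ge \calE(\bv_0,\bbE_0)$ coming from weak-$*$ continuity of $(\bv,\bbE)$ at $t=0$ (Lemma~\ref{lem:ineq.pointwisevariational} and Remark~\ref{rem:weakform.variational}) and weak lower semicontinuity of $\calE$.
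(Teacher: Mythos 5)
Your overall strategy coincides with the paper's: construct $(\bv_N,\bbE_N)$ by the implicit saddle-point scheme with the convexifying weight $\wt{\calK}$, derive uniform energy bounds, extract weak(-$*$) limits, introduce the defect term $\calK(\psi,\Psi)(E_N-\calE(\bv_N,\bbE_N))$ (which vanishes at the discrete level) and pass to the limit by convexity/lower semicontinuity as in Lemma~\ref{lem:K.giveslsc}. The treatment of the $\bbE$-nonlinearities and of the initial condition is essentially the paper's argument.

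There is, however, a genuine gap: you never establish \emph{strong} convergence of $\bv_N$, and your claim that the whole left-hand side of~\eqref{eq:envarform} is weakly-$*$ lower semicontinuous up to the defect term with the weight $\calK$ from~\eqref{eq:K.lsc} is false. The convective terms $\rho(\bv_N\otimes\bv_N):\nabla\psi$ and $\frac{\rho}{2}(\dv\bv_N)(\bv_N\cdot\psi)$ are quadratic in $\bv_N$ and are \emph{not} compensated by~\eqref{eq:K.lsc}, which contains no $\|\psi\|_{L^\infty}^2$ contribution and only $\frac{M}{\kappa}\|\nabla\psi\|_{L^\infty}$ (enough for the $D\varphi(\bbE)$ term, not for $\rho(\bv\otimes\bv):\nabla\psi$, which would require an extra $2\|\nabla\psi\|_{L^\infty}$ as in $\wt{\calK}$). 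Likewise $\bv_N\cdot\nabla\Psi:\bbE_N$ cannot be controlled by $\|\Psi\|_{L^\infty}^2$ (it involves $\nabla\Psi$); in the paper this term, and the convective ones, are identified in the limit by combining weak convergence with the \emph{strong} convergence $\bv_N\to\bv$ in $L^p(0,T;L^q)$. That strong convergence is not free: it requires the time-increment estimate $\norm{\bv_N-\bv_N(\cdot-\tau)}_{L^1(\tau,T;(\calY^{\bv})^*)}\leq C\tau$, obtained by testing the discrete inequality with $(-\frac{1}{\rho}\psi,0)$ and exploiting the structure of $\calF^n_\tau$, followed by the Dreher--J\"ungel variant of the Aubin--Lions lemma for piecewise constant interpolants. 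Without this step you must either enlarge $\calK$ to include $\|\psi\|_{L^\infty}^2$, $\|\nabla\psi\|_{L^\infty}$ and $\|\nabla\Psi\|_{L^\infty}$ terms --- which would prove existence only for a weight strictly larger than the one asserted in the theorem --- or supply the compactness argument. A second, smaller inaccuracy: $E_N$ is not a separate variable of the scheme with $\calE(\bv_N,\bbE_N)\le E_N$; it is defined a posteriori as $E_N=\calE(\bv_N,\bbE_N)$, and the inequality $\calE(\bv,\bbE)\le E$ emerges only in the limit from lower semicontinuity of $\calE$.
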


The proof of Theorem~\ref{thm:existence.envar}
will be concluded in Subsect.\,\ref{subsec:limit.envar}.

\begin{remark}
As we shall see from the proof, 
we can further refine the solution concept from Defintion~\ref{eq:envarform}.
By the strong compactness in $\bv$,
we actually find $E=\int_\Omega \frac{\rho}{2}|\bv|^2\dd x + I$
with $I\geq \int_\Omega\varphi(\bbE)\dd x$ a.e.~in $(0,T)$.
Therefore, 
the energy defect $E-\calE(\bv,\bbE)=I-\int_\Omega\varphi(\bbE)\dd x\geq 0$ 
is only due to the stored elastic energy.
\end{remark}

\subsection{Functional framework}

We introduce some abbreviations.
For $\gamma\geq 0$, we define
\begin{equation}
\label{eq:DN.def}
\begin{aligned}
\calD_\gamma(\bv,\bbE)
&\coloneqq \int_\Omega \bbS(\bv):\bbD(\bv)
+\gamma\absl{\nabla [D\varphi(\bbE)]}^2 
\dd x,
\\
\calN_\gamma(\bv,\bbE\mid\psi,\Psi)
&\coloneqq\int_\Omega
\frac{\rho}{2}(\dv\bv)(\bv\cdot\psi)
+\rho (\bv\otimes\bv):\nabla\psi \dd x
\\
&\quad
+\int_\Omega
-(D\varphi(\bbE)+\varphi(\bbE)\bbI):\nabla\psi -\bbS(\bv):\bbD(\psi)
\dd x
\\
&\quad
+\int_\Omega
(\dv\bv)\bbE:\Psi+\bv\cdot\nabla \Psi:\bbE - (\bbE\bbW(\bv)-\bbW(\bv)\bbE):\Psi
\dd x 
\\
&\quad
+\int_\Omega
\bbD(\bv):\Psi - \gamma \nabla[D\varphi(\bbE)]\tcolon\nabla\Psi 
 \dd x,
\\
\calB(t;\psi)
&\coloneqq\int_\Omega\bf(t)\cdot\psi\dd x.
\end{aligned}
\end{equation}
Moreover, we set $\calD_0(\bv):=\calD_0(\bv,\bbE)$,
which is independent of $\bbE$.
As we only consider quadratic elastic energies as in~\ref{ass:phi.quadratic} if $\gamma>0$,
the functional $\calD_\gamma$ is actually given by
\[
\calD_\gamma(\bv,\bbE)
\coloneqq \int_\Omega \bbS(\bv):\bbD(\bv)
+\gamma\absl{\nabla \bbK[\bbE]}^2 
\dd x
\]
and thus quadratic and strictly convex.
We further introduce the function spaces
\[
\calX_\gamma=\calX^{\bv}\times\calX_\gamma^{\bbE},
\qquad
\calY=\calY^{\bv}
\times\calY^{\bbE}
\]
with
\[
\begin{aligned}
\calX^{\bv}&=\setcl{\bv\in H^1(\Omega;\R^3)}{\bv=0 \text{ on }\Gamma_{\rmD},\ \bv\cdot\bn=0 \text{ on }\Gamma_{\rmN}=0},
\\
\calX^{\bbE}_\gamma&=
\begin{cases}
L^2(\Omega;\R^{3\times3}_{\sym}) &\text{if }\gamma=0,
\\
H^1(\Omega;\R^{3\times3}_{\sym}) &\text{if }\gamma>0,
\end{cases}
\end{aligned}
\]
and
\[
\begin{aligned}
\calY^{\bv}
&=
\setcl{\psi\in C^1(\overline{\Omega};\R^3)}{\psi=0\text{ on }\Gamma_D,\ \psi\cdot\bn=0\text{ on }\Gamma_N},
\\
\calY^{\bbE}
&=
C^1(\overline{\Omega};\R^{3\times3}_{\sym}).
\end{aligned}
\]

\subsection{Consistency}

From the derivation of the solution concepts,
we have seen that strong solutions to~\eqref{eq:system} and~\eqref{eq:system.reg} 
with sufficient regularity 
are generalized solutions in the sense of Definition~\ref{def:weaksol.gamma} 
and Definition~\ref{def:envar}, respectively.
It is a natural question if also the converse it true, 
that is if generalized solutions with increased regularity
satisfy the problem in the classical sense.  

We first show this consistency property for weak solutions to~\eqref{eq:system.reg}.
To this end, we assume 
\begin{equation}
\begin{aligned}
\label{eq:approximation.P}
&\forall\, \bbA\in L^1(\Omega;\R^{3\times3}_{\sym,0})\ \
\forall\, \bbG\in C^2(\partial\Omega;\R^{3\times3}_{\sym,0})\ \
\exists\, (\Phi_j)_{j\in\N}\subset C^1(\overline{\Omega};\R^{3\times3}_{\sym,0}):
\\
&\qquad
\Phi_j\big|_{\partial\Omega}=\bbG,
\qquad
\Phi_j \tow \bbA \text{ in }L^1(\Omega;\R^{3\times3}_{\sym,0}),
\qquad
\calP(\Phi_j)\to\calP(\bbA) \text{ as }j\to\infty.
\end{aligned}
\end{equation}
One readily shows that this approximation property 
is satisfied for integral functionals of the form 
$\calP(\bbA)=\int_\Omega P(\bbA(x))\dd x$
if $P\colon\R^{3\times3}_{\sym,0}\to[0,\infty]$ is a convex function 
with $P(0)=0$ and with a suitable upper bound.
In particular, the visco-plastic dissipation potential~\eqref{eq:disspot.example} 
is suitable. 

\begin{proposition}
\label{prop:cons.weak.reg}
Let $\calP$ satisfy~\eqref{eq:approximation.P}.
Let $(\bv,\bbE)$ be a weak solution to~\eqref{eq:system.reg} 
in the sense of Definition~\ref{def:weaksol.gamma}.
Assume 
\[
\bv\in C^1([0,T];C^2(\ol\Omega;\R^3)),
\qquad
\bbE\in C^1([0,T];C^2(\ol\Omega;\R^{3\times3}_{\sym})).
\]
Then $(\bv,\bbE)$ is a strong solution, that is, 
the system~\eqref{eq:system.reg} is satisfied pointwise.
\end{proposition}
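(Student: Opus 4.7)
The plan is to first extract the pointwise momentum equation and then a pointwise-in-time variational inequality that encodes both the strain-rate inclusion and the Neumann boundary condition. Since $\psi$ appears linearly in~\eqref{eq:weakform.reg}, the argument from the remark following Definition~\ref{def:weaksol.gamma} produces the weak momentum identity~\eqref{eq:weakform.momentum}. Given the smoothness of $\bv,\bbE$, I would integrate by parts in~\eqref{eq:weakform.momentum} to put all derivatives on $\bv$ and $\bbE$; the identity then reads
\[
\int_s^t\!\!\int_\Omega\!\Lb{\rho(\partial_t\bv+\bv\cdot\nabla\bv)+\tfrac{\rho}{2}(\dv\bv)\bv-\dv(\bbS(\bv)+\bbT)-\bf}\cdot\psi\dd x\dd t'
+ \int_s^t\!\!\int_{\Gamma_N}\![(\bbS(\bv)+\bbT)\bn]_\tau\cdot\psi\dd S\dd t' = 0
\]
for every admissible $\psi$. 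Varying $\psi$ first compactly supported in $\Omega$ and then admitting support up to $\Gamma_N$ yields~\eqref{eq:sys.mom.reg} and~\eqref{eq:neumannbdry.reg}; the conditions $\bv=0$ on $\Gamma_\mathrm{D}$ and $\bv\cdot\bn=0$ on $\Gamma_\mathrm{N}$ are built into the function space.

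Subtracting~\eqref{eq:weakform.momentum} from~\eqref{eq:weakform.reg} eliminates all $\psi$-dependent terms. Integrating by parts in space and time in the strain block (invoking the product rule, $\tr(\bbE\bbW(\bv)-\bbW(\bv)\bbE)=0$, and $\bv\cdot\bn=0$) moves all derivatives onto $\bv,\bbE$. Testing the already-derived~\eqref{eq:sys.mom.reg} with $\bv$ gives the kinetic-energy identity which, combined with identity~\eqref{eq:Edot.Dphi} and a further integration by parts on the $\gamma\abs{\nabla D\varphi(\bbE)}^2$ part of $\calD_\gamma$, cancels the energy contribution, the viscous dissipation, and the body-force work. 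Under the smoothness hypothesis the integrand is continuous in time, so localizing in time turns the residual inequality into a pointwise-in-$t$ statement
\[
\int_\Omega R:(\Psi_0-D\varphi(\bbE))\dd x - \gamma\int_{\partial\Omega}(\bn\cdot\nabla D\varphi(\bbE)):(\Psi_0-D\varphi(\bbE))\dd S + \calP(\dev D\varphi(\bbE)) - \calP(\dev\Psi_0)\leq 0
\]
for every $\Psi_0\in C^1(\ol\Omega;\R^{3\times3}_{\sym})$, where $R := \bbD(\bv)-\jder{\bbE}+\gamma\Delta D\varphi(\bbE)$.

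Three families of tests then finish the proof. First, $\Psi_0=D\varphi(\bbE)+\alpha\bbI$ with $\alpha\in C_c^\infty(\Omega)$ makes both the $\calP$-difference and the boundary integral vanish, so varying the sign of $\alpha$ gives $\tr R = 0$ a.e.~in $\Omega$. Second, for arbitrary $\bbB\in L^1(\Omega;\R^{3\times3}_{\sym,0})$, the approximation property~\eqref{eq:approximation.P} applied with boundary datum $\bbG=\dev D\varphi(\bbE)|_{\partial\Omega}$ (which is of class $C^2$ by the smoothness hypothesis) produces $\Phi_j\in C^1(\ol\Omega;\R^{3\times3}_{\sym,0})$ with $\Phi_j\tow\bbB$ in $L^1$, $\calP(\Phi_j)\to\calP(\bbB)$, and $\Phi_j|_{\partial\Omega}=\bbG$; setting $\Psi_0=\sph D\varphi(\bbE)+\Phi_j$ makes $\Psi_0-D\varphi(\bbE)=\Phi_j-\dev D\varphi(\bbE)$ deviatoric and zero on $\partial\Omega$, killing the boundary integral, and passing $j\to\infty$ (using $\tr R=0$ and pairing weak $L^1$-convergence with the bounded $\dev R\in L^\infty$) yields
\[
\calP(\bbB)\geq\calP(\dev D\varphi(\bbE))+\int_\Omega\dev R:(\bbB-\dev D\varphi(\bbE))\dd x,
\]
which characterizes $R\in\partial\calP(\dev D\varphi(\bbE))$, i.e., the inclusion~\eqref{eq:sys.strain.reg}. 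Third, to recover~\eqref{eq:sys.bdry.E.reg} I reuse the same construction, now with $\bbB=\dev D\varphi(\bbE)$ and \emph{arbitrary} $\bbG\in C^2(\partial\Omega;\R^{3\times3}_{\sym,0})$: the $R$- and $\calP$-terms vanish in the limit, leaving $-\gamma\int_{\partial\Omega}(\bn\cdot\nabla D\varphi(\bbE)):(\bbG-\dev D\varphi(\bbE))\dd S\leq 0$; a translation in $\bbG$ and sign reversal force $\dev(\bn\cdot\nabla D\varphi(\bbE))=0$ on $\partial\Omega$. The trace part follows by testing with $\Psi_0=D\varphi(\bbE)+\alpha\bbI$ for $\alpha\in C^1(\ol\Omega)$ unrestricted at the boundary, yielding $\tr(\bn\cdot\nabla D\varphi(\bbE))=0$. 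Together these give~\eqref{eq:sys.bdry.E.reg}.

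The hard part is the density step: since $\calP$ enters the variational inequality nonlinearly and the stress-diffusion contribution couples the boundary integral to the test functions, the subdifferential characterization cannot be read off from smooth tests alone. The hypothesis~\eqref{eq:approximation.P} is exactly what resolves this, simultaneously providing an $L^1$-approximation with $\calP$-convergence (needed for the bulk inclusion) and prescribed boundary values (needed to control, then isolate, the boundary integral), so that~\eqref{eq:sys.strain.reg} and~\eqref{eq:sys.bdry.E.reg} are accessed by essentially the same family of test functions.
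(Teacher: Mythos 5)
Your proposal is correct and follows essentially the same route as the paper's proof: revert the integrations by parts to a pointwise-in-time variational inequality, decouple the momentum and strain blocks by special choices of test functions, and use the approximation property~\eqref{eq:approximation.P} to extend the strain inequality to $L^1$ test functions, which yields both the subdifferential inclusion~\eqref{eq:sys.strain.reg} and the natural boundary condition~\eqref{eq:sys.bdry.E.reg} (your three-family argument for $\tr R=0$ and for separating the inclusion from the boundary condition is in fact more explicit than the paper's). The only omission is the trivial verification of the initial condition~\eqref{eq:initial.reg}, which follows from the weak* attainment of the initial data noted in Remark~\ref{rem:weakform.variational} combined with the assumed continuity.
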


\begin{proof}
We use the abbreviations introduced in~\eqref{eq:DN.def}.
Due to the regularity assumptions, we can write
\[
\partial_t\calE(\bv,\bbE)=\int_\Omega\rho\partial_t\bv\cdot\bv + \partial_t\bbE:D\varphi(\bbE) \dd x.
\]
Using integration by parts in time, 
we can thus reformulate~\eqref{eq:weakform.reg} as
\[
\begin{aligned}
&\int_s^t \int_\Omega\rho\partial_t\bv\cdot\bv + \partial_t\bbE:D\varphi(\bbE) \dd x
+\calD_\gamma(\bv,\bbE)
+\calP(\dev D\varphi(\bbE))-\calB(\tau;\bv) \dd \tau
\\
&\quad
+\int_s^t\int_\Omega
-\rho\partial_t\bv\cdot\psi
-\partial_t\bbE:\Psi\dd x
+\calN_\gamma(\bv,\bbE\mid\psi,\Psi)
-\calP(\dev\Psi)+\calB(\tau;\psi)\dd \tau
\leq 0
\end{aligned}
\]
for all $s,t\in[0,T]$ with $s<t$,
and
for all test functions 
$(\psi,\Psi)\in C^1([0,T];\calY)$.
In particular, we can omit the time integrals and consider 
time-independent test functions $(\psi,\Psi)\in \calY$.
We set $\Sigma=\bbS(\bv)+D\varphi(\bbE)+\varphi(\bbE)\bbI$.
Employing integration by parts 
and, in some sense, reverting the algebra that led to the weak formulation~\eqref{eq:weakform.original},
we arrive at
\[
\begin{aligned}
&\int_\Omega
\Lp{\rho\partial_t\bv+\rho\bv\cdot\nabla\bv+\frac{\rho}{2}(\dv\bv)\bv
-\dv\Sigma-\bf
}\cdot\lp{\bv-\psi}  \dd x 
\\
&\quad
+\int_\Omega
\Lp{\jder\bbE-\bbD(\bv)-\gamma\Delta[D\varphi(\bbE)]}:\lp{D\varphi(\bbE)-\Psi} \dd x 
\\
&\quad
+\int_{\Gamma_N} \Sigma\bn\cdot\lp{\bv-\psi}\dd \sigma
+\int_{\partial\Omega} \gamma\bn\cdot\nabla[D\varphi(\bbE)]:\lp{D\varphi(\bbE)-\Psi} \dd \sigma
\\
&\leq \calP(\dev\Psi) - \calP(\dev D\varphi(\bbE))
\end{aligned}
\]
in $(0,T)$ for all $(\psi,\Psi)\in \calY$.
On the one hand, the choice $\Psi=D\varphi(\bbE)$ yields
\[
\int_\Omega
\Lp{\rho\partial_t\bv+\rho\bv\cdot\nabla\bv+\frac{\rho}{2}(\dv\bv)\bv
-\dv\Sigma-\bf
}\cdot\lp{\bv-\psi}  \dd x 
+\int_{\Gamma_N} \Sigma\bn\cdot\lp{\bv-\psi}\dd \sigma
\leq 0
\]
for all $\psi\in\calY^{\bv}$.
By the fundamental theorem of calculus, 
this implies the momentum equation~\eqref{eq:sys.mom.reg} 
and the boundary conditions~\eqref{eq:neumannbdry.reg}
as $\bv\cdot\bn=\psi\cdot\bn=0$.
On the other hand, the choice $\psi=\bv$ leads to
\[
\begin{aligned}
&\int_\Omega
\Lp{\jder\bbE-\bbD(\bv)-\gamma\Delta[D\varphi(\bbE)]}:\lp{D\varphi(\bbE)-\Psi} \dd x 
+\int_{\partial\Omega} \gamma\bn\cdot\nabla[D\varphi(\bbE)]:\lp{D\varphi(\bbE)-\Psi} \dd \sigma
\\
&\quad\leq \calP(\dev\Psi) - \calP(\dev D\varphi(\bbE))
\end{aligned}
\]
for all $\Psi\in\calY^{\bbE}=C^1(\overline{\Omega};\R^{3\times3}_{\sym})$.
If we assume $\Psi=D\varphi(\bbE)$ on $\partial\Omega$, 
the boundary integral vanishes,
and by approximation property~\eqref{eq:approximation.P}, 
the resulting inequality extends to all $\Psi\in L^1(\Omega;\R^{3\times3}_{\sym})$,
which gives~\eqref{eq:sys.strain.reg}
by the definition of the convex subdifferential.
A combination with the previous inequality further yields~\eqref{eq:sys.bdry.E.reg}.
The other boundary conditions are satisfied by assumption.
Since the initial values are attained in a weak* sense
for any weak solution, 
compare Remark~\ref{rem:weakform.variational},
the assumed regularity yields~\eqref{eq:initial.reg}
pointwise.
In summary, we have shown that $(\bv,\bbE)$ is a classical solution to~\eqref{eq:system.reg}.
\end{proof}

An analogous statement holds for energy-variational solutions to~\eqref{eq:system}.
Here the approximation property~\eqref{eq:approximation.P} can be relaxed to
\begin{equation}
\begin{aligned}
\label{eq:approximation.P.weaker}
&\forall\, \bbA\in L^1(\Omega;\R^{3\times3}_{\sym,0})\ \
\exists\, (\Phi_j)_{j\in\N}\subset C^1(\overline{\Omega};\R^{3\times3}_{\sym,0}):
\\
&\qquad
\Phi_j \tow \bbA \text{ in }L^1(\Omega;\R^{3\times3}_{\sym,0}),
\qquad
\calP(\Phi_j)\to\calP(\bbA) \text{ as }j\to\infty.
\end{aligned}
\end{equation}
Observe that the consistency result is independent of the regularity weight $\calK$.

\begin{proposition}
\label{prop:cons.envar}
Let $\calP$ satisfy~\eqref{eq:approximation.P.weaker}.
Let $(\bv,\bbE,E)$ be an energy-variational solution to~\eqref{eq:system} 
in the sense of Definition~\ref{def:envar}.
Assume $\lim_{t\searrow0}E(t)=\calE(\bv_0,\bbE_0)$ and
\[
\bv\in C^1([0,T];C^2(\ol\Omega;\R^3)),
\qquad
\bbE\in C^1([0,T];C^1(\ol\Omega;\R^{3\times3}_{\sym})).
\]
Then $E=\calE(\bv,\bbE)$, and $(\bv,\bbE)$ is a strong solution, that is, 
the system~\eqref{eq:system} is satisfied pointwise.
\end{proposition}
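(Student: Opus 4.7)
The overall strategy is a two-stage argument that reduces Proposition~\ref{prop:cons.envar} to (an analogue of) Proposition~\ref{prop:cons.weak.reg}. In the first stage I show that the extra energy variable $E$ coincides with $\calE(\bv,\bbE)$ a.e., so that the regularity weight term on the right-hand side of~\eqref{eq:envarform} drops out; in the second stage, the reduced inequality is exactly the weak formulation~\eqref{eq:weakform.original} with $\gamma=0$, from which I recover the classical system~\eqref{eq:system} by repeating the test-function argument of the preceding proof.

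For stage one I would test~\eqref{eq:envarform} with $(\psi,\Psi)=(\bv,D\varphi(\bbE))$; this is admissible after multiplication by a temporal cutoff $\eta_\epsilon\in C_c^1([0,T))$ with $\eta_\epsilon\to 1$ pointwise, and the cutoff terms disappear in the limit. For this specific choice all truly nonlinear contributions cancel: the kinetic convection $\frac{\rho}{2}(\dv\bv)|\bv|^2+\rho(\bv\otimes\bv){:}\nabla\bv$ integrates to zero since $\bv\cdot\bn=0$; the dissipation $\bbS(\bv){:}\bbD(\bv)$ and the forcing $\bf\cdot\bv$ cancel against the corresponding terms with $\psi=\bv$; the Zaremba--Jaumann antisymmetric contribution vanishes by \ref{ass:phi.commute}; and combining an integration by parts on $\int\bv\cdot\nabla D\varphi(\bbE){:}\bbE$ with the chain rule $D\varphi(\bbE){:}(\bv\cdot\nabla\bbE)=\bv\cdot\nabla\varphi(\bbE)$ makes the divergence terms cancel pointwise. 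The $\calP$ terms drop trivially. After integration by parts in time and using the $C^1_t$-regularity to rewrite $\tfrac{\rmd}{\rmd\tau}\calE(\bv,\bbE)$ explicitly, the inequality collapses to
\[
\bigl[E-\calE(\bv,\bbE)\bigr]_s^t
\leq \int_s^t \calK(\bv,D\varphi(\bbE))\bigl(E-\calE(\bv,\bbE)\bigr)\dd\tau.
\]
Setting $g\coloneqq E-\calE(\bv,\bbE)\geq 0$, noting that $\calK(\bv,D\varphi(\bbE))$ is bounded by the assumed regularity, and using the hypothesis $\lim_{t\searrow 0}E(t)=\calE(\bv_0,\bbE_0)$ together with the continuity of $\tau\mapsto \calE(\bv(\tau),\bbE(\tau))$ to get $g(0^+)=0$, a Grönwall argument (applicable since $g$ is BV) yields $g\equiv 0$, i.e.\ $E=\calE(\bv,\bbE)$ a.e.

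For stage two the right-hand side of~\eqref{eq:envarform} now vanishes identically, so the inequality reduces to the weak formulation~\eqref{eq:weakform.original}. From here I proceed verbatim as in the proof of Proposition~\ref{prop:cons.weak.reg} with $\gamma=0$: integration by parts in time on the $\partial_t\psi$ and $\partial_t\Psi$ terms, combined with the $C^1_t$-regularity of $(\bv,\bbE)$, yields a pointwise-in-time inequality valid for all time-independent test functions $(\psi,\Psi)\in\calY$. Choosing $\Psi=D\varphi(\bbE)$ and varying $\psi$ gives~\eqref{eq:sys.mom} and the Neumann condition~\eqref{eq:neumannbdry}, while choosing $\psi=\bv$ and invoking the approximation property~\eqref{eq:approximation.P.weaker} to extend the resulting inequality from $\calY^{\bbE}$ to $L^1(\Omega;\R^{3\times3}_{\sym,0})$ gives the inclusion~\eqref{eq:sys.strain}. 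The initial condition~\eqref{eq:initial} follows from the assumed pointwise regularity combined with the weak-* attainment built into the energy-variational formulation.

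The main obstacle is the bookkeeping of stage one. The delicate point is that, in contrast to the quadratic case where $\bbE{:}D\varphi(\bbE)=2\varphi(\bbE)$ is an identity of scalars, for general $\varphi$ the cancellation of the "extra" convective terms relies on a careful use of \ref{ass:phi.commute} to kill the spin contribution, followed by the chain rule to rewrite $D\varphi(\bbE){:}(\bv\cdot\nabla\bbE)$ as $\bv\cdot\nabla\varphi(\bbE)$; only the combined divergence terms $(\dv\bv)[\bbE{:}D\varphi(\bbE)-\varphi(\bbE)-D\varphi(\bbE){:}\bbE+\varphi(\bbE)]=0$ vanish, not the individual ones. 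Once this algebra is verified, the Grönwall closure and the reduction to Proposition~\ref{prop:cons.weak.reg} are straightforward.
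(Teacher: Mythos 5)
Your proposal is correct and follows essentially the same route as the paper: test~\eqref{eq:envarform} with $(\psi,\Psi)=(\bv,D\varphi(\bbE))$, collapse the inequality to $[E-\calE(\bv,\bbE)]\big|_s^t\leq\int_s^t\calK(\bv,D\varphi(\bbE))(E-\calE(\bv,\bbE))\dd t'$, apply Gronwall using $\lim_{t\searrow0}E(t)=\calE(\bv_0,\bbE_0)$ to get $E=\calE(\bv,\bbE)$, and then repeat the argument of Proposition~\ref{prop:cons.weak.reg} with $\gamma=0$, where~\eqref{eq:approximation.P.weaker} suffices since no boundary integral arises. Your explicit verification of the term cancellations (in particular the combined divergence identity via \ref{ass:phi}\ref{ass:phi.commute} and the chain rule) fills in what the paper calls a ``straightforward calculation.''
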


\begin{proof}
In virtue of the regularity assumptions, we can use $(\psi,\Psi)=(\bv,D\varphi(\bbE))$ as 
a test function in the energy-variational formulation~\eqref{eq:envarform}.
A straightforward calculation leads to
\[
\Lb{E-\calE(\bv,\bbE)}\Big|_s^t
\dd x\dd t'
\leq \int_s^t \mathcal K(\bv,D\varphi(\bbE))\lp{E-\calE(\bv,\bbE)}\dd t'
\]
for all a.a.~$s<t$.
By Gronwall's inequality, this implies $E=\calE(\bv,\bbE)$ a.e.~in $[0,T]$.
In particular, $(\bv,\bbE)$ is a weak solution and subject to~\eqref{eq:weakform.original}.
Proceeding now as in the proof of Proposition~\ref{prop:cons.weak.reg},
we conclude that $(\bv,\bbE)$ is even a strong solution to~\eqref{eq:system}.
Here, it is sufficient to assume~\eqref{eq:approximation.P.weaker} instead of~\eqref{eq:approximation.P}
since no boundary integral over $\partial\Omega$ appears in this argument 
as $\gamma=0$.
\end{proof}

\section{Time-discrete approximation}
\label{sec:approximation}

For both existence results, we use the same approximation scheme,
based on the weak formulations~\eqref{eq:weakform.original} and~\eqref{eq:weakform.reg}.
With the abbreviations from~\eqref{eq:DN.def},
those can be summarized as
\begin{equation}\label{eq:weakform.combined}
\begin{aligned}
&\Big[\calE(\bv,\bbE)
-\int_\Omega\rho\bv\cdot\psi + \bbE:\Psi \dd x\Big]\Big|_s^t
+\int_s^t \calD_\gamma(\bv,\bbE)
+\calP(\dev D\varphi(\bbE))-\calB(\tau;\bv) \dd \tau
\\
&\quad
+\int_s^t\int_\Omega
\rho\bv\cdot\partial_t\psi
+\bbE:\partial_t\Psi\dd x
+\calN_\gamma(\bv,\bbE\mid\psi,\Psi)
-\calP(\dev\Psi)+\calB(\tau;\psi)\dd \tau
\leq 0
\end{aligned}
\end{equation}
for a.a.~$s,t\in[0,T]$ with $s<t$,
and
for all test functions 
$(\psi,\Psi)\in C^1([0,T];\calY)$.

\subsection{Time-iterative scheme}

To introduce the time discretization at level $N\in\N$,
we fix the time-step size $\tau=T/N$,
and the intermediate time steps 
$t^n:=n\tau$ with $n=0,\dots,N$.
We define the time-discrete version $\calB^n$ of the functional $\calB$,
associated with the external force, by
\begin{equation}
\label{eq:Bnfn}
\calB^n(\psi)=\int_\Omega \bf^n\cdot\psi\dd x, 
\qquad
\bf^n(x)=\frac{1}{\tau}\int_{t^{n-1}}^{t^n} \bf(t,x)\dd t.
\end{equation}
We let $(\bv^0,\bbE^0)=(\bv_0,\bbE_0)$, 
and we determine $(\bv^n,\bbE^n)\in\calX_\gamma$, for $n=1,\dots,N$,
iteratively by 
\begin{equation}
\label{eq:weakform.discretized}
\calF^n_\tau(\bv^n,\bbE^n\mid\psi,\Psi)\leq 0,
\end{equation}
for all test functions
$(\psi,\Psi)\in C^1([0,T];\calY)$
with $\tau\wt{\calK}(\psi,\Psi)\leq 1$.
Here, 
we have set
\begin{equation}
\label{eq:F.def}
\begin{aligned}
&\calF^n_\tau(\bv,\bbE\mid\psi,\Psi)
\\
& \quad
:=\calE(\bv,\bbE)-\calE(\bv^{n-1},\bbE^{n-1})
-\int_\Omega\rho(\bv-\bv^{n-1})\cdot\psi\dd x
-\int_\Omega(\bbE-\bbE^{n-1}):\Psi\dd x
\\
&\qquad
+\tau \Lp{\calD_\gamma(\bv,\bbE)
+\calP(\dev D\varphi(\bbE))
+\calN_\gamma(\bv,\bbE\mid\psi,\Psi)
-\calP(\dev\Psi)-\calB^n(\bv-\psi)},
\end{aligned}
\end{equation}
and the function $\wt{\calK}\colon\calY\to[0,\infty)$ is defined by
\begin{equation}
\label{eq:K.convexify}
\begin{aligned}
\wt{\calK}(\psi,\Psi)
&= \frac{2\rho}{\mu} \|\psi\|_{L^\infty(\Omega)}^2
+\max\setL{2,\frac{M}{\kappa}} \|\nabla\psi\|_{L^\infty(\Omega)}
\\
&\qquad
+ \|\dv\psi\|_{L^\infty(\Omega)}
+ \frac{6}{\kappa\mu} \|\Psi\|_{L^\infty(\Omega)}^2
+ \frac{1}{\kappa} \|\nabla\Psi\|_{L^\infty(\Omega)}.
\end{aligned}
\end{equation}
Note that this function $\wt{\calK}$ differs from the 
regularity weight $\calK$ from~\eqref{eq:K.lsc},
which allows for the limit passage towards 
an energy-variational solution.
The possibility to use a different regularity weight for the construction of approximate solutions 
was recently discovered and exploited in~\cite{EiterLasarzikSliwinski_exselenvar}.

By choosing $(\psi,\Psi)=(0,0)$ in~\eqref{eq:weakform.discretized},
we obtain the time-discrete energy-dissipation inequality
\begin{equation}
\label{eq:enineq.discrete}
\calE(\bv^n,\bbE^n)-\calE(\bv^{n-1},\bbE^{n-1})
+\tau \calD_\gamma(\bv^n,\bbE^n)
+\tau \calP(\dev D\varphi(\bbE^n))
-\calB^n(\bv)\leq 0.
\end{equation}

Observe that~\eqref{eq:weakform.discretized} 
corresponds to a fully implicit scheme for~\eqref{eq:weakform.combined},
where 
we have restricted the class of admissible test functions
by means of the function $\wt{\calK}$.
This restriction ensures the existence of the iterates in terms 
of a minimization problem
due to the convexity of $\calF^n_\tau$ with respect to the state variables.

\begin{lemma}
\label{lem:Kconvexifies}
Let $\gamma\geq 0$, and let~\ref{ass:Omega}--\ref{ass:PDphi} be satisfied.
If $\gamma>0$, we also assume~\ref{ass:phi.quadratic}.
Let $n\in\set{1,\dots,N}$, $N\in\N$, and let $(\bv^{n-1},\bbE^{n-1})\in\calX_\gamma$.
Let $\wt{\calK}\colon\calY\to[0,\infty)$ be given by~\eqref{eq:K.convexify}.
For every $(\psi,\Psi)\in\calY$
with $\tau\wt{\calK}(\psi,\Psi)\leq 1$,
the function
\[
\calX_\gamma\to\R,
\quad
(\bv,\bbE)\mapsto\calF^n_\tau(\bv,\bbE\mid\psi,\Psi)
\]
is convex and weakly lower semicontinuous
and satisfies the estimate
\begin{equation}
\label{eq:F.est}
\begin{aligned}
\calF^n_\tau&(\bv,\bbE\mid\psi,\Psi)
\geq 
\lb{1-\tau \wt{\calK}(\psi,\Psi)}\calE(\bv,\bbE)-\calE(\bv^{n-1},\bbE^{n-1})
\\
&\quad-\int_\Omega\rho(\bv-\bv^{n-1})\cdot\psi\dd x
-\int_\Omega(\bbE-\bbE^{n-1}):\Psi\dd x
\\
&\quad
+\tau \llp{\int_\Omega\gamma\absl{\nabla\nb{D\varphi(\bbE)}}^2
-\bf^n\cdot\bv\dd x
+\calP(\dev D\varphi(\bbE))
-\calP(\dev\Psi)}
\\
&\quad
+\tau\int_\Omega
-\bbS(\bv):\bbD(\psi)+\bf^n\cdot\psi
-\bbD(\bv):\Psi - \gamma \nabla[D\varphi(\bbE)]\tcolon\nabla\Psi 
\dd x.
\end{aligned}
\end{equation}
\end{lemma}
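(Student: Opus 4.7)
The plan is to prove convexity, weak lower semicontinuity, and the estimate~\eqref{eq:F.est} in parallel, since all three reduce to Young-type absorptions of the indefinite summands of $\calN_\gamma$. First I would split $\calF^n_\tau = G + \tau\calN_\gamma^{\mathrm{ic}}$, where $G$ collects the manifestly convex pieces $\calE(\bv,\bbE) + \tau\calD_\gamma(\bv,\bbE) + \tau\calP(\dev D\varphi(\bbE))$ together with all affine terms in $(\bv,\bbE)$, and $\calN_\gamma^{\mathrm{ic}}$ bundles the six indefinite summands of $\calN_\gamma$: the two kinetic quadratics $\tfrac{\rho}{2}(\dv\bv)(\bv\cdot\psi)$ and $\rho(\bv\otimes\bv):\nabla\psi$, the elastic-energy contribution $-(D\varphi(\bbE)+\varphi(\bbE)\bbI):\nabla\psi$, and the three bilinear terms $(\dv\bv)\bbE:\Psi$, $\bv\cdot\nabla\Psi:\bbE$, and $-(\bbE\bbW(\bv)-\bbW(\bv)\bbE):\Psi$. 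The functional $G$ is convex and weakly lower semicontinuous on $\calX_\gamma$ by~\ref{ass:phi}\ref{ass:phi.strconv},~\ref{ass:PDphi}, the Korn-type estimate~\eqref{eq:Korn}, and, when $\gamma>0$, the quadratic structure from~\ref{ass:phi.quadratic}.

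For the estimate~\eqref{eq:F.est}, after cancelling the three linear-in-state summands of $\calN_\gamma$ (namely $-\bbS(\bv):\bbD(\psi)$, $\bbD(\bv):\Psi$, and $-\gamma\nabla[D\varphi(\bbE)]\tcolon\nabla\Psi$) that already appear on the right-hand side, it suffices to bound $|\calN_\gamma^{\mathrm{ic}}|$ by $\wt\calK(\psi,\Psi)\calE(\bv,\bbE) + \int_\Omega\bbS(\bv):\bbD(\bv)\dd x$. I would estimate each summand in turn. The two kinetic quadratics are handled by Young's inequality with parameter $\mu/(\rho\|\psi\|_{L^\infty})$, converting $\int|\nabla\bv|^2$ into a fraction of $\int\bbS(\bv):\bbD(\bv)$ via~\eqref{eq:Korn} and producing the coefficients $\tfrac{2\rho}{\mu}\|\psi\|_{L^\infty}^2$ and $2\|\nabla\psi\|_{L^\infty}$ in $\wt\calK$. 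The contribution $\varphi(\bbE)\dv\psi$ is dominated trivially by $\|\dv\psi\|_{L^\infty}\int\varphi(\bbE)$, producing the $\|\dv\psi\|_{L^\infty}$ coefficient. For $D\varphi(\bbE):\nabla\psi$ I would use Taylor's theorem together with~\ref{ass:phi}\ref{ass:phi.D3phibounded} to write $|D\varphi(\bbE)| \leq C_1 + C_2|\bbE| + \tfrac{M}{2}|\bbE|^2$, and then strong convexity~\ref{ass:phi}\ref{ass:phi.strconv} to absorb $|\bbE|^2$ into $\tfrac{2}{\kappa}\varphi(\bbE)$; this yields the factor $M/\kappa$, while the $C_1,C_2$-terms are absorbed by another round of Young. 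Each of the three bilinear summands is then handled by Young's inequality with a free parameter chosen to split the estimate between $\int\bbS(\bv):\bbD(\bv)$ (through~\eqref{eq:Korn}) and $\int\varphi(\bbE)$ (through strong convexity), accounting for the $\tfrac{6}{\kappa\mu}\|\Psi\|_{L^\infty}^2$ and $\tfrac{1}{\kappa}\|\nabla\Psi\|_{L^\infty}$ coefficients in~\eqref{eq:K.convexify}.

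Convexity and weak lower semicontinuity then follow by re-reading the same Young estimates at the level of second variations: the negative part of $D^2(\tau\calN_\gamma)[(\bu,\bF),(\bu,\bF)]$ is pointwise dominated, coefficient by coefficient, by $\tau\wt\calK(\psi,\Psi)$ times the positive semi-definite Hessian contributions of $\calE + \tau\calD_\gamma$, so that under the constraint $\tau\wt\calK(\psi,\Psi)\leq 1$ the total Hessian of $\calF^n_\tau$ is positive semi-definite. Weak lsc is inherited by expressing $\calF^n_\tau$ as a sum of convex continuous (on $L^2$, resp.~$\calX_\gamma$) functionals together with $\calP(\dev D\varphi(\bbE))$, which is convex and weakly lsc by~\ref{ass:PDphi}. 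The main obstacle I expect is the elastic term $D\varphi(\bbE):\nabla\psi$: since by~\ref{ass:phi}\ref{ass:phi.D3phibounded} the map $D\varphi$ has at most quadratic growth while $\varphi$ has only quadratic coercivity from~\ref{ass:phi}\ref{ass:phi.strconv}, the absorption is tight and produces the coefficient $M/\kappa$ exactly, so both assumptions have to be used in concert and the Taylor-remainder/strong-convexity bookkeeping must be done carefully to verify that nothing is missed.
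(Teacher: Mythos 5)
Your overall architecture coincides with the paper's: $\calF^n_\tau$ is split into manifestly convex pieces plus the indefinite part of $\calN_\gamma$, and each indefinite summand is paired, via Young's inequality and the Korn-type inequality~\eqref{eq:Korn}, with exactly the portion of $\calD_0(\bv)+\wt{\calK}(\psi,\Psi)\,\calE(\bv,\bbE)$ that its coefficient in~\eqref{eq:K.convexify} provides; both convexity and the estimate~\eqref{eq:F.est} are then read off from the nonnegativity and convexity of these grouped blocks under the constraint $\tau\wt{\calK}(\psi,\Psi)\leq 1$. The constants you assign to the two kinetic quadratics, the three $\Psi$-bilinear terms and the $\varphi(\bbE)\dv\psi$ term match the paper's blocks $\calG_1,\dots,\calG_5,\calG_7$, and your closing second-variation remark is exactly the paper's ``nonnegative quadratic form'' argument in different words.

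The one step that would fail as described is your treatment of $D\varphi(\bbE):\nabla\psi$. You propose the growth bound $|D\varphi(\bbE)|\leq C_1+C_2|\bbE|+\tfrac{M}{2}|\bbE|^2$ from Taylor's theorem and then ``another round of Young'' for the $C_1,C_2$-contributions. But \eqref{eq:F.est} is exact, with no additive slack: $\wt{\calK}$ in \eqref{eq:K.convexify} carries only the coefficient $\max\{2,M/\kappa\}\norm{\nabla\psi}_{L^\infty(\Omega)}$ in front of $\calE$, and there is no term on the right-hand side of \eqref{eq:F.est} into which $C_1\norm{\nabla\psi}_{L^\infty(\Omega)}|\Omega|$ or $C_2\norm{\nabla\psi}_{L^\infty(\Omega)}\int_\Omega|\bbE|\dd x$ could be absorbed, so this route does not reproduce the stated inequality. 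Moreover, for convexity the relevant object is not the size of $D\varphi$ but its second derivative: the paper uses \ref{ass:phi}\ref{ass:phi.D3phibounded} only through the Hessian comparison $\absl{D^3\varphi(\bbE)[\bF,\bF]:\nabla\psi}\leq M\norm{\nabla\psi}_{L^\infty(\Omega)}|\bF|^2\leq \tfrac{M}{\kappa}\norm{\nabla\psi}_{L^\infty(\Omega)}\,D^2\varphi(\bbE)[\bF,\bF]$, the last step by strong convexity \ref{ass:phi}\ref{ass:phi.strconv}, which makes $\bbE\mapsto -D\varphi(\bbE):\nabla\psi+\tfrac{M}{\kappa}\norm{\nabla\psi}_{L^\infty(\Omega)}\varphi(\bbE)$ convex with no lower-order constants appearing at all; this is the paper's block $\calG_6$ combined with its share of $\wt{\calK}\calE$. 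So the obstacle you correctly flagged is resolved by a second-derivative comparison, not by a pointwise growth bound on $|D\varphi|$; as written, your mechanism for this one term does not close.
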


\begin{proof}
Let $(\psi,\Psi)\in\calY$
with $\tau\wt{\calK}(\psi,\Psi)\leq 1$.
We add and subtract $\tau \wt{\calK}(\psi,\Psi)\calE(\bv,\bbE)$
and rearrange the terms in $\calF^n_\tau$  to obtain
\begin{equation}\label{eq:F.rearranged}
\begin{aligned}
\calF^n_\tau&(\bv,\bbE\mid\psi,\Psi)
=\lb{1-\tau \wt{\calK}(\psi,\Psi)}\calE(\bv,\bbE)-\calE(\bv^{n-1},\bbE^{n-1})
\\
&\qquad-\int_\Omega\rho(\bv-\bv^{n-1})\cdot\psi\dd x
-\int_\Omega(\bbE-\bbE^{n-1}):\Psi\dd x
\\
&\qquad
+\tau \llp{\int_\Omega\gamma\absl{\nabla\nb{D\varphi(\bbE)}}^2
-\bf^n\cdot\bv\dd x
+\calP(\dev D\varphi(\bbE))
-\calP(\dev\Psi)}
\\
&\qquad
+\tau\Lp{\calN_\gamma(\bv,\bbE\mid\psi,\Psi)
+\calD_0(\bv)
+\wt{\calK}(\psi,\Psi)\calE(\bv,\bbE)}.
\end{aligned}
\end{equation}
In virtue of the restriction $\tau\wt{\calK}(\psi,\Psi)\leq 1$ and the assumptions \ref{ass:Omega}--\ref{ass:PDphi},
and~\ref{ass:phi.quadratic} if $\gamma>0$,
the functional defined by the first three lines 
satisfies the assertions on convexity and lower semicontinuity.
It thus remains to show these properties for the functional defined by the last line,
that is, for
\[
\calG\colon\calX_\gamma\to\R,
\quad
\calG(\bv,\bbE)=\calN_\gamma(\bv,\bbE\mid\psi,\Psi)
+\calD_0(\bv)
+\wt{\calK}(\psi,\Psi)\calE(\bv,\bbE),
\]
which we split into several parts.
We first use Young's inequality to estimate
\[
\begin{aligned}
\calG_1(\bv,\bbE)
&:=
\int_\Omega
\frac{\rho}{2}(\dv\bv)(\bv\cdot\psi) + \frac{\mu}{4}|\nabla\bv|^2 \dd x + \frac{2\rho}{\mu}\norm{\psi}_{L^\infty(\Omega)}^2\int_\Omega\frac{\rho}{2}|\bv|^2\dd x
\geq
0,
\\
\calG_2(\bv,\bbE)
&:=
\int_\Omega\rho (\bv\otimes\bv):\nabla\psi \dd x+2\norm{\nabla\psi}_{L^\infty(\Omega)}\int_\Omega\frac{\rho}{2}|\bv|^2\dd x
\geq0,
\\
\calG_3(\bv,\bbE)
&:=
\int_\Omega
(\dv\bv)\bbE:\Psi
+\frac{\mu}{4}|\nabla\bv|^2 \dd x 
+\frac{2}{\kappa\mu}\norm{\Psi}_{L^\infty(\Omega)}^2\int_\Omega\frac{\kappa}{2}|\bbE|^2\dd x
\geq 0,
\\
\calG_4(\bv,\bbE)
&:=
\int_\Omega
\bv\cdot\nabla \Psi:\bbE
+\frac{1}{\rho}\norm{\nabla\Psi}_{L^\infty(\Omega)}
\int_\Omega\frac{\rho}{2}|\bv|^2\dd x
+\frac{1}{\kappa}\norm{\nabla\Psi}_{L^\infty(\Omega)}
\int_\Omega\frac{\kappa}{2}|\bbE|^2\dd x
\geq 0,
\\
\calG_5(\bv,\bbE)
&:=
\int_\Omega
-(\bbE\bbW(\bv)-\bbW(\bv)\bbE):\Psi
+\frac{\mu}{2}|\nabla\bv|^2 \dd x 
+\frac{4}{\kappa\mu}\norm{\Psi}_{L^\infty(\Omega)}^2\int_\Omega\frac{\kappa}{2}|\bbE|^2\dd x
\geq 0.
\end{aligned}
\]
Hence, $\calG_j$, $j=1,\dots,5$, is a non-negative and 
quadratic functional.
Therefore, it is convex,
and as it is defined in terms of a convex integrand, 
it is also weakly lower semicontinuous.
The same applies to the functionals defined by
\[
\begin{aligned}
\calG_6(\bv,\bbE)
&:=\int_\Omega-D\varphi(\bbE):\nabla\psi \dd x
+ \frac{M}{\kappa}\norm{\nabla\psi}_{L^\infty}\int_\Omega\frac{\kappa}{2}|\bbE|^2 \dd x,\\
\calG_7(\bv,\bbE)
&:=\int_\Omega-(\varphi(\bbE)\bbI):\nabla\psi \dd x
+\norm{\dv\psi}_{L^\infty}\int_\Omega\varphi(\bbE)\dd x
\end{aligned}
\]
due to $|D^3\varphi(\bbE)|\leq M$ and convexity of $\varphi$, respectively.
Similarly, the Korn-type inequality~\eqref{eq:Korn} implies
that
\[
\calH(\bv):=\calD_0(\bv)-\int_\Omega\mu|\nabla\bv|^2\dd x
\]
defines a convex and weakly lower semicontinuous functional.
We now write $\calG$ as
\[
\begin{aligned}
\calG(\bv,\bbE)
&=\sum_{j=1}^7\calG_j(\bv,\bbE) 
+\calH(\bv)
+\int_\Omega
-\bbS(\bv){:}\bbD(\psi)
+\bf{\cdot}\psi
-\bbD(\bv){:}\Psi 
- \gamma \nabla[D\varphi(\bbE)]{\tcolon}\nabla\Psi
\dd x
\\
&\quad
+\wt{\calK}(\psi,\Psi)\calE(\bv,\bbE)- \Lp{
\frac{2\rho}{\mu} \|\psi\|_{\infty}^2
+2 \|\nabla\psi\|_{\infty}}
\int_\Omega\frac{\rho}{2}|\bv|^2\dd x
\\
&\quad
- \|\dv\psi\|_\infty \int_\Omega\varphi(\bbE)\dd x
-\Lp{\frac{M}{\kappa}\|\nabla\psi\|_{\infty}
+ \frac{6}{\kappa\mu} \|\Psi\|_\infty
+ \frac{1}{\kappa} \|\nabla\Psi\|_\infty
}\int_\Omega\frac{\kappa}{2}|\bbE|^2\dd x.
\end{aligned}
\]
From the strong convexity of $\varphi$
and the choice of $\wt{\calK}$ as in~\eqref{eq:K.convexify},
and from~\ref{ass:phi.quadratic} if $\gamma>0$,
we see that $\calG$ is convex and weakly lower semicontinuous.
Due to the non-negativity of $\calG_1,\dots,\calG_7$, $\calH$, 
we further conclude
\[
\calG(\bv,\bbE)\geq \int_\Omega
-\bbS(\bv):\bbD(\psi)+\bf^n\cdot\psi
-\bbD(\bv):\Psi - \gamma \nabla[D\varphi(\bbE)]\tcolon\nabla\Psi
\dd x,
\]
whence we obtain~\eqref{eq:F.est} from~\eqref{eq:F.rearranged}.
\end{proof}

To show existence of a suitable pair $(\bv^n,\bbE^n)$ 
satisfying~\eqref{eq:weakform.discretized} at the $n$-th time step, 
we determine $(\bv^n,\bbE^n)$ by solving the following saddle-point problem.

\begin{lemma}
\label{lem:saddlepoint.existence}
Let $\gamma\geq 0$, and let~\ref{ass:Omega}--\ref{ass:PDphi} be satisfied.
If $\gamma>0$, we also assume~\ref{ass:phi.quadratic}.
Let $n\in\set{1,\dots,N}$, $N\in\N$, and let $(\bv^{n-1},\bbE^{n-1})\in\calX_\gamma$ be given.
Then there exists $(\bv^{n},\bbE^{n})\in\calX_\gamma$ such that
\begin{equation}
\label{eq:saddlepoint}
(\bv^n,\bbE^n)
\in\argmin_{(\bv,\bbE)\in \calX_\gamma}\sup_{\tau\wt{\calK}(\psi,\Psi)\leq 1}
\calF^n_\tau(\bv,\bbE\mid\psi,\Psi).
\end{equation}
Moreover, $(\bv^{n},\bbE^{n})$ satisfies~\eqref{eq:weakform.discretized}.
\end{lemma}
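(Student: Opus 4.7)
The plan is to realize $(\bv^n,\bbE^n)$ as a minimizer of the convex functional
$$\Phi(\bv,\bbE) := \sup\setcl{\calF^n_\tau(\bv,\bbE\mid\psi,\Psi)}{(\psi,\Psi)\in\calY,\ \tau\wt{\calK}(\psi,\Psi)\leq 1}$$
on $\calX_\gamma$ and, separately, to show that the minimum value is non-positive. Since $\Phi(\bv^n,\bbE^n)\leq 0$ is equivalent to $\calF^n_\tau(\bv^n,\bbE^n\mid\psi,\Psi)\leq 0$ for every admissible pair $(\psi,\Psi)$, this yields both assertions of the lemma, namely \eqref{eq:saddlepoint} and \eqref{eq:weakform.discretized}.

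First I would verify the existence of a minimizer by the direct method. Lemma~\ref{lem:Kconvexifies} shows that $(\bv,\bbE)\mapsto\calF^n_\tau(\bv,\bbE\mid\psi,\Psi)$ is convex and weakly lower semicontinuous on $\calX_\gamma$ whenever $\tau\wt{\calK}(\psi,\Psi)\leq 1$, and these properties are preserved by the pointwise supremum. Choosing $(\psi,\Psi)=(0,0)$ in the defining supremum, the estimate \eqref{eq:F.est} yields
$$\Phi(\bv,\bbE)\geq \calE(\bv,\bbE)-\calE(\bv^{n-1},\bbE^{n-1})+\tau\calD_\gamma(\bv,\bbE)+\tau\calP(\dev D\varphi(\bbE))-\tau\calB^n(\bv),$$
from which coercivity on $\calX_\gamma$ follows via the quadratic lower bound in \ref{ass:phi}\ref{ass:phi.quadrgrowth}, the Korn-type inequality \eqref{eq:Korn}, the non-negativity of $\calP$, and Young's inequality applied to the force term. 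The functional $\Phi$ is also proper, which can be seen by evaluating the defining supremum at a point where the energy and $\calP$ are finite; the boundedness of the admissible set in $W^{1,\infty}$ ensures this supremum is finite. A minimizer $(\bv^n,\bbE^n)\in\calX_\gamma$ thus exists.

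Second, I would establish $\min_{\calX_\gamma}\Phi\leq 0$ via a minimax argument. The map $(\psi,\Psi)\mapsto \calF^n_\tau(\bv,\bbE\mid\psi,\Psi)$ is affine in $\psi$ and equal to the sum of an affine part in $\Psi$ and the concave term $-\tau\calP(\dev\Psi)$, hence concave on the convex constraint set $\calC:=\setcl{(\psi,\Psi)\in\calY}{\tau\wt{\calK}(\psi,\Psi)\leq 1}$. Since $\wt{\calK}$ is convex and controls the $W^{1,\infty}$-norms, $\calC$ is bounded in $W^{1,\infty}(\Omega)$, hence weak-$*$ compact by Banach--Alaoglu; the lower semicontinuity of $\calP$ combined with the compact embedding $W^{1,\infty}\hookrightarrow C^0$ yields weak-$*$ upper semicontinuity of $\calF^n_\tau(\bv,\bbE\mid\cdot,\cdot)$ on $\calC$. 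Ky Fan's minimax theorem then gives
$$\min_{\calX_\gamma}\Phi = \sup_{(\psi,\Psi)\in\calC}\inf_{(\bv,\bbE)\in\calX_\gamma}\calF^n_\tau(\bv,\bbE\mid\psi,\Psi),$$
and it remains to verify that the inner infimum is non-positive for every admissible $(\psi,\Psi)$.

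The main obstacle is exactly this last step. The obvious substitutions $(\bv,\bbE)=(\bv^{n-1},\bbE^{n-1})$ or $(0,0)$ do not directly produce a non-positive value; instead one must exploit the first-order optimality conditions for the convex inner problem, which yield a time-discrete analogue of the momentum and strain-rate equations, and test the resulting system against $(\bv,\bbE)$ itself. The cancellations built into the convective and Zaremba--Jaumann contributions, balanced against the quadratic energy terms absorbed into the weight $\wt{\calK}$ through \eqref{eq:K.convexify}, are precisely what make the resulting expression non-positive. A related strategy, used in~\cite{EitLas24envarhyp,EiterLasarzikSliwinski_exselenvar}, is to first carry out this argument on a finite-dimensional Galerkin subspace where the inner minimization has an accessible solution, and then pass to the limit using the convexity of $\Phi$ and the weak-$*$ compactness of $\calC$.
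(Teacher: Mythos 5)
Your overall architecture — realize $(\bv^n,\bbE^n)$ as a minimizer of $\Phi=\sup_{\tau\wt{\calK}(\psi,\Psi)\leq1}\calF^n_\tau(\cdot\,,\cdot\mid\psi,\Psi)$ via the direct method using Lemma~\ref{lem:Kconvexifies} and the coercivity of $\calF^n_\tau(\cdot\,,\cdot\mid0,0)$, then exchange $\min$ and $\sup$ by a minimax theorem and verify that the inner infimum is nonpositive for each admissible $(\psi,\Psi)$ — is exactly the paper's strategy. But you stop precisely at the step that carries the whole proof, and the route you sketch for it does not work. Deriving the first-order optimality conditions of the inner convex problem and ``testing the resulting system against $(\bv,\bbE)$ itself'' is circular: it merely reproduces the minimal value of $\calF^n_\tau(\cdot\,,\cdot\mid\psi,\Psi)$ and gives no sign information unless you already possess a competitor with known sign. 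A Galerkin detour is likewise unnecessary and would not by itself produce the sign.

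The missing idea is an \emph{explicit competitor}: for each admissible $(\psi,\Psi)$ one evaluates $\calF^n_\tau$ at $(\bv,\bbE)=(\psi,D\varphi^*(\Psi))$, where $\varphi^*$ is the Fenchel conjugate, so that $D\varphi(\bbE)=\Psi$. With this choice every ``dangerous'' term cancels or collapses: the $\calP$-terms cancel since $\calP(\dev D\varphi(\bbE))=\calP(\dev\Psi)$; the dissipation $\bbS(\psi){:}\bbD(\psi)$ and the $\gamma$-terms in $\calD_\gamma$ cancel against their counterparts in $\calN_\gamma$; the force term $\calB^n(\bv-\psi)$ vanishes; the kinetic and elastic energy differences combine with the duality pairings, via the Fenchel--Young equality $\varphi(D\varphi^*(\Psi))+\varphi^*(\Psi)=D\varphi^*(\Psi){:}\Psi$ and convexity of $\varphi$, into $-\int_\Omega\frac{\rho}{2}|\psi-\bv^{n-1}|^2\dd x$ minus a Bregman distance, both nonpositive; and the remaining transport terms reduce (using \ref{ass:phi}\ref{ass:phi.commute} to kill the Zaremba--Jaumann commutator and the chain rule $\psi\cdot\nabla\Psi{:}D\varphi^*(\Psi)=\psi\cdot\nabla[\varphi^*(\Psi)]$) to $\tau\int_\Omega\dv(\varphi^*(\Psi)\psi)\dd x=0$ by the boundary conditions on $\psi$. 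Hence $\inf_{(\bv,\bbE)}\calF^n_\tau(\bv,\bbE\mid\psi,\Psi)\leq\calF^n_\tau(\psi,D\varphi^*(\Psi)\mid\psi,\Psi)\leq0$, which closes the argument. As a secondary point, your appeal to weak-$*$ compactness of the constraint set $\calC\subset C^1(\ol\Omega)$ is shaky ($W^{1,\infty}$-bounded sets are not weak-$*$ closed inside $\calY$); the paper instead uses a saddle-point theorem (\cite[Thm.\,2.130, Rem.\,2.131]{BarbuPrecupanu2012convopti}) that requires compactness only in the variable where coercivity holds.
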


\begin{proof}
From the definition in~\eqref{eq:F.def}, 
we see immediately that 
for fixed $(\bv,\bbE)\in\calX_\gamma$,
the functional
$(\psi,\Psi)\mapsto\calF^n_\tau(\bv,\bbE\mid\psi,\Psi)$
is concave and upper semicontinuous
on the convex set defined by $\tau\wt{\calK}(\psi,\Psi)\leq 1$.
By Lemma~\ref{lem:Kconvexifies},
the functional $(\bv,\bbE)\mapsto\calF^n_\tau(\bv,\bbE\mid\psi,\Psi)$
is convex and weakly lower semicontinuous
for fixed $(\psi,\Psi)\in\calY$ with $\tau\wt{\calK}(\psi,\Psi)\leq 1$.
Moreover, $(\bv,\bbE)\mapsto\calF^n_\tau(\bv,\bbE\mid 0,0)$ has weakly compact sublevel sets due to the coercivity estimate
\[
\begin{aligned}
\calF^n_\tau(\bv,\bbE\mid 0,0)
&\geq \calE(\bv,\bbE)-\calE(\bv^{n-1},\bbE^{n-1}) + \tau\calD_\gamma(\bv,\bbE)
-\tau\calB^n(\bv)
\\
&\geq \int_\Omega\frac{\rho}{2}\abs{\bv}^2+\beta|\bbE|^2-1\dd x
-\tau\norm{\bf^{n}}_{L^2(\Omega)}\norm{\bv}_{L^2(\Omega)},
\end{aligned}
\]
where we employed~\ref{ass:phi}\ref{ass:phi.quadrgrowth}.
From a classical saddle-point theorem, 
see~\cite[Thm.\,2.130, Rem.\,2.131]{BarbuPrecupanu2012convopti} for instance,
we thus obtain the existence of 
$(\bv^n,\bbE^n)\in\calX_\gamma$ such that
\[
\begin{aligned}
\sup_{\tau\wt{\calK}(\psi,\Psi)\leq 1}\calF^n_\tau(\bv^n,\bbE^n\mid\psi,\Psi)
&=\min_{(\bv,\bE)\in \calX_\gamma}\sup_{\tau\wt{\calK}(\psi,\Psi)\leq 1}
\calF^n_\tau(\bv,\bbE\mid\psi,\Psi)
\\
&=\sup_{\tau\wt{\calK}(\psi,\Psi)\leq 1}\min_{(\bv,\bE)\in \calX_\gamma}
\calF^n_\tau(\bv,\bbE\mid\psi,\Psi).
\end{aligned}
\]
In particular, $(\bv^n,\bbE^n)$ satisfies~\eqref{eq:saddlepoint},
and we have
\[
\sup_{\tau\wt{\calK}(\psi,\Psi)\leq 1}\calF^n_\tau(\bv^n,\bbE^n\mid\psi,\Psi)
\leq\sup_{\tau\wt{\calK}(\psi,\Psi)\leq 1}\calF^n_\tau(\psi,D\varphi^*(\Psi)\mid\psi,\Psi).
\]
Here, $\varphi^*$ is the Fenchel conjugate of $\varphi$,
defined by
$
\varphi^*(\bbB)=\sup\setc{\bbB:\bbA-\varphi(\bbA)}{\bbA\in\R^{3\times3}_{\sym}}.
$
It holds $[D\varphi]^{-1}(\bbB)=D\varphi^*(\bbB)$, and
$\varphi(D\varphi^*(\bbB))+\varphi^*(\bbB)=D\varphi^*(\bbB):\bbB$.
With these properties, integration by parts and~\ref{ass:phi}\ref{ass:phi.commute},
we obtain for any $(\psi,\Psi)$ that
\[
\begin{aligned}
&\calF^n_\tau(\psi,D\varphi^*(\Psi)\mid\psi,\Psi)
\\
&=
\calE(\psi,D\varphi^*(\Psi))-\calE(\bv^{n-1},\bbE^{n-1})
-\int_\Omega\rho(\psi-\bv^{n-1})\cdot\psi\dd x
-\int_\Omega(D\varphi^*(\Psi)-\bbE^{n-1}):\Psi\dd x
\\
&\qquad
+\tau\int_\Omega
\frac{\rho}{2}(\dv\psi)|\psi|^2
+\rho (\psi\otimes\psi):\nabla\psi \dd x
-\Psi:\nabla\psi -\varphi(D\varphi^*(\Psi))\dv\psi 
 \dd x
\\
&\qquad
+\tau\int_\Omega
(\dv\psi)D\varphi^*(\Psi):\Psi+\psi\cdot\nabla \Psi:D\varphi^*(\Psi)
\dd x
\\
&\qquad
+\tau\int_\Omega
 - (D\varphi^*(\Psi)\bbW(\psi)-\bbW(\psi)D\varphi^*(\Psi)):\Psi+\bbD(\psi):\Psi 
 \dd x 
\\
&=
\int_\Omega 
-\frac{\rho}{2}|\psi-\bv^{n-1}|^2
-\Lp{\varphi(\bbE^{n-1})-\varphi(D\varphi^*(\Psi))
-D\varphi(D\varphi^*(\Psi)):\lp{\bbE^{n-1}-D\varphi^*(\Psi)}}
\dd x
\\
&\qquad+\tau\int_\Omega 
(\dv\psi)\varphi^*(\Psi)+\psi\cdot\nabla \varphi^*(\Psi)
\dd x
\leq 0,
\end{aligned}
\]
where we used the convexity of $\varphi$ in the last step.
This shows~\eqref{eq:weakform.discretized}
and completes the proof.
\end{proof}

\begin{remark}
The solution $(\bv^n,\bbE^n)$ to the minimization problem~\eqref{eq:saddlepoint}
is unique.
This follows from the uniform convexity 
of the functional $(\bv,\bbE)\mapsto\calF(\bv,\bbE\mid\psi,\Psi)$,
which is inherited from the energy functional $\calE$.
\end{remark}

\begin{remark}\label{rem:phi.quadratic.adaption}
The ensure the inequality~\eqref{eq:Bnfn},
we make use of the saddle-point structure of problem~\eqref{eq:saddlepoint},
To this end, we chose the regularity weight $\wt{\calK}$
in such a way that gives sufficient convexity properties.
This approach, based on an implicit time discretization,
is only possible for $\gamma>0$ if $\varphi$ is quadratic.
A first idea to circumvent this assumption 
would be to discretize the stress diffusion term differently, 
for instance, as
\[
\int_\Omega \gamma\absl{\nabla[D\varphi(\bbE(t))]}^2\dd x
\approx\int_\Omega \sum_{j=1}^3\gamma\absl{D^2\varphi(\bbE^{n-1}):\partial_j\bbE^n}^2\dd x
\]
for $t\in(t^{n-1},t^n)$.
This leads to a convex term in $\bbE^n$,
and the convexity property from Lemma~\ref{lem:Kconvexifies}
and the existence of a minimizer as in~\eqref{eq:saddlepoint}
would still follow for the accordingly modified $\calF^n_\tau$.
However, due to the partially explicit discretization,
the verification of the inequality~\eqref{eq:weakform.discretized}
remains unclear in this case.
\end{remark}

\subsection{Approximate solutions}

We now define the sequence of approximate solution.
Fix $N\in\N$ and let $\tau=T/N$ as before.
We set $(\bv^0,\bbE^0)=(\bv_0,\bbE_0)$ and
let $(\bv^n,\bbE^n)\in\calX_\gamma$ satisfy~\eqref{eq:weakform.discretized}
for $n=1,\dots,N$,
the existence of which we have shown in Lemma~\ref{lem:saddlepoint.existence}.
We introduce the piecewise constant prolongations
$(\bv_N,\bbE_N)\colon (-\tau,T]\to\calX_\gamma$ as
\begin{equation}
\label{eq:approxsol.def}
(\bv_N,\bbE_N)(t)=
(\bv^n,\bbE^n)
\qquad\text{for } t\in(t^{n-1},t^n], \,n=0,\dots,N,
\end{equation}
where $t^n:=n\tau$.

We define piecewise constant approximations of the operator $\calB$ and the 
external force $\bf$ in the same way,
namely
\[
\calB_N(t;\psi)=\int_\Omega \bf_N(t)\cdot\psi\dd x, 
\qquad
\bf_N(t)=\bf^n
\ \text{ for } t\in(t^{n-1},t^n], \,n=1,\dots,N,
\]
for $\bf^n$ as in~\eqref{eq:Bnfn}.
Observe that
\begin{equation}\label{eq:fN.integral}
\int_0^T \norm{\bf_N(t)}_{L^2(\Omega)} \dd t
=\sum_{n=1}^N\tau\norm{\bf^n}_{L^2(\Omega)}
=\int_0^T\norm{\bf(t)}_{L^2(\Omega)} \dd t,
\end{equation}
and we have $\bf_N\to\bf$ strongly in $L^1(0,T;L^2(\Omega))$.

For a test function $\phi\in C_c^1([0,T))$,
we define the piecewise constant and piecewise linear approximations
$\phi_N$ and $\wh\phi_N$ as
\[
\begin{aligned}
\phi_N(t)
&=\phi(t^n)
&&\text{for } t\in[t^n,t^{n+1}), \,n=0,\dots,N,
\\
\wh\phi_N(t)
&= \frac{t-t^{n}}{t^{n+1}-t^n}\phi(t^{n+1})
+\frac{t^{n+1}-t}{t^{n+1}-t^{n}}\phi(t^{n})
&&\text{for } t\in[t^{n},t^{n+1}], \,n=0,\dots,N-1.
\end{aligned}
\]
Observe that $\wh\phi_N$ is defined on $[0,T]$,
while $\phi_N$ is defined on $[0,T+\tau)$ with $\phi_N(t)=\phi(T)=0$ for $t\in[T,T+\tau)$.
We further have
the following variant of discrete integration by parts:
For all $a\in L^1(-\tau,T)$ and $\chi\in L^\infty(0,T)$,
it holds
\begin{equation}
\label{eq:intbyparts.disc}
\begin{aligned}
&\int_0^T\phi_N(t)\chi(t)\frac{a(t)-a(t{-}\tau)}{\tau}\dd t
\\
&
=-\Lp{\phi(0) \,\frac{1}{\tau}\int_0^\tau \chi(t)a(t-\tau)\dd t
+\int_\tau^T\partial_t\wh\phi_N(t)\chi(t{+}\tau)a(t)
+\phi_N(t)\delta_\tau\chi(t)a(t)\dd t},
\end{aligned}
\end{equation}
where $\delta_\tau\chi(t):=\frac{1}{\tau}\np{\chi(t+\tau)-\chi(t)}$ denotes the difference quotient.

Now let $\phi\in C_c^1([0,T))$ with $\phi\geq 0$.
Let $(\psi,\Psi)\in C^1([0,T];\calY)$,
and fix $N\in\N$.
Then~\eqref{eq:weakform.discretized} implies
\[
\phi_N(t)\calF^n_\tau(\bv_N(t),\bbE_N(t)\mid\psi(t),\Psi(t))\leq 0
\]
for $t\in(t^{n-1},t^n]$ and $n=1,\dots,N$.
Division by $\tau$, integration over $(0,T)$, and use of identity~\eqref{eq:intbyparts.disc}
lead to
\begin{equation}
\label{eq:discrete.integrated}
\begin{aligned}
&\int_\tau^T-\partial_t\wh\phi_N(t)\Lb{
\calE(\bv_N(t),\bbE_N(t))
-\int_\Omega\rho\bv_N(t)\cdot\psi(t+\tau) + \bbE_N(t):\Psi(t+\tau)
}\dd t
\\
&
+\int_\tau^T\phi_N\Lp{\int_\Omega\rho\bv_N\cdot\delta_\tau\psi
+\bbE_N:\delta_\tau\Psi\dd x}\dd t
-\int_0^T\phi_N\calB_N(\bv_N-\psi)\dd t
\\
&
+\int_0^T\phi_N\Lp{\calD_\gamma(\bv_N,\bbE_N)
+\calP(\dev D\varphi(\bbE_N))
+\calN_\gamma(\bv_N,\bbE_N\mid\psi,\Psi)
-\calP(\dev\Psi)
}\dd t
\\
&
-\phi(0)\Lb{\calE(\bv_0,\bbE_0)
-\int_\Omega\bv_0\cdot\Lp{\frac{1}{\tau}\int_0^\tau\psi(t)\dd t}
+\bbE_0:\Lp{\frac{1}{\tau}\int_0^\tau\Psi(t)\dd t} \dd x}
\leq 0
\end{aligned}
\end{equation}
for all $(\psi,\Psi)\in C^1([0,T];\calY)$
and $\phi\in C_c^1([0,T))$ with $\phi\geq 0$.
We shall pass to the limit $N\to\infty$, that is $\tau\to 0$, in this equation.
To do so, we need convergence in suitable topologies, 
which follows from associated bounds we derive next.

\subsection{A priori estimates}

We now collect the uniform bounds for the approximate solutions $(\bv_N,\bbE_N)$, $N\in\N$,
defined in~\eqref{eq:approxsol.def}.

\begin{lemma}\label{lem:aprioriest}
Let $\gamma\geq 0$, and
let~\ref{ass:Omega}--\ref{ass:PDphi} be satisfied.
If $\gamma>0$, we also assume~\ref{ass:phi.quadratic}.
Then there exists a constant $C>0$
such that 
\begin{align}
\norm{\bv_N}_{L^\infty(0,T;L^2(\Omega))}
+\norm{\nabla\bv_N}_{L^2(0,T;L^2(\Omega))}
&\leq C,
\label{eq:aprioriest.v}
\\
\norm{\bbE_N}_{L^\infty(0,T;L^2(\Omega))}
+\sqrt{\gamma}\norm{\nabla\bbE_N}_{L^2(0,T;L^2(\Omega))}
&\leq C,
\label{eq:aprioriest.E}
\\
\norm{\varphi(\bbE_N)}_{L^\infty(0,T;L^1(\Omega))}
+\norm{D\varphi(\bbE_N)}_{L^\infty(0,T;L^1(\Omega)}
&\leq C,
\label{eq:aprioriest.phi}
\\
\norm{\calE(\bv_N,\bbE_N)}_{\BV([0,T])} 
+\norm{\calP(\dev D\varphi(\bbE_N))}_{L^1(0,T)}
&\leq C
\label{eq:aprioriest.energy}
\end{align}
for all $N\in\N$,
and
\begin{equation}
\norm{\bv_N-\bv_N(\cdot-\tau)}_{L^1(\tau,T;(\calY^{\bv})^*)}
\leq C\tau
\label{eq:aprioriest.increment.v}
\end{equation}
for $N\in\N$ sufficiently large. 
Here $C>0$ is independent of $N=T/\tau$ and $\gamma\geq 0$.
\end{lemma}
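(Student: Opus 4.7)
The plan is to extract all five bounds from the single inequality \eqref{eq:weakform.discretized} by suitable choices of test functions, exploiting the affine structure of $\calF^n_\tau$ in $(\psi,\Psi)$. First, I set $(\psi,\Psi)=(0,0)$, which trivially satisfies $\tau\wt\calK(0,0)=0\leq 1$, to recover the discrete energy-dissipation inequality \eqref{eq:enineq.discrete}. Summing over $n=1,\dots,m$, bounding the forcing by Cauchy--Schwarz together with $\|\bv^n\|_{L^2}\leq\sqrt{2\calE(\bv^n,\bbE^n)/\rho}$, and invoking a discrete Gronwall argument together with~\ref{ass:f} yields
\[
\max_{n}\calE(\bv^n,\bbE^n)+\sum_{n=1}^N\tau\calD_\gamma(\bv^n,\bbE^n)+\sum_{n=1}^N\tau\calP(\dev D\varphi(\bbE^n))\leq C.
\]
This single estimate gives \eqref{eq:aprioriest.v} directly, the $L^\infty(0,T;L^2)$-bound on $\bbE_N$ via the quadratic growth~\ref{ass:phi.quadrgrowth}, the $L^\infty(0,T;L^1)$-bound on $D\varphi(\bbE_N)$ via Taylor expansion and~\ref{ass:phi.D3phibounded}, the $L^2$-bound on $\nabla\bv_N$ via the Korn-type inequality \eqref{eq:Korn}, and the $L^1$-bound on $\calP(\dev D\varphi(\bbE_N))$. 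If $\gamma>0$, assumption~\ref{ass:phi.quadratic} makes $D\varphi(\bbE)=\bbK[\bbE]$ linear, so that $\nabla[D\varphi(\bbE_N)]=\bbK[\nabla\bbE_N]$, and the coercivity of $\bbK$ turns the $\gamma$-part of the dissipation into the claimed control of $\sqrt\gamma\,\nabla\bbE_N$.

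For the BV-part of \eqref{eq:aprioriest.energy}, I would note that the piecewise-constant prolongation $\calE(\bv_N,\bbE_N)$ has total variation $\sum_n|a_n|$ with $a_n:=\calE(\bv^n,\bbE^n)-\calE(\bv^{n-1},\bbE^{n-1})$. The energy-dissipation inequality \eqref{eq:enineq.discrete} gives $(a_n)^+\leq\tau\calB^n(\bv^n)$, whose sum is controlled by the forcing; combining this with the telescoping identity $\sum_n a_n=\calE(\bv^N,\bbE^N)-\calE(\bv^0,\bbE^0)$ yields $\sum_n|a_n|=2\sum_n(a_n)^+-\sum_n a_n\leq C$.

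The real obstacle is the increment bound \eqref{eq:aprioriest.increment.v}. The key observation is that, despite the nonlinearity of $\wt\calK$, the functional $(\psi,\Psi)\mapsto\calF^n_\tau(\bv^n,\bbE^n\mid\psi,\Psi)$ is \emph{affine}, so that
\[
\calF^n_\tau(\bv^n,\bbE^n\mid\lambda\psi,0)=A_n+\lambda B_n(\psi),
\]
where $A_n\leq 0$ is precisely the left-hand side of \eqref{eq:enineq.discrete} and
\[
B_n(\psi)=-\int_\Omega\rho(\bv^n-\bv^{n-1})\cdot\psi\dd x+\tau\calN_\gamma(\bv^n,\bbE^n\mid\psi,0)+\tau\calB^n(\psi).
\]
For $\psi\in\calY^{\bv}$ with $\|\psi\|_{C^1(\ol\Omega)}\leq 1$, a direct inspection of \eqref{eq:K.convexify} gives $\wt\calK(\pm\psi,0)\leq C_*$ for an explicit constant depending only on $\rho$, $\mu$, $M$, $\kappa$; hence, once $\tau\leq 1/C_*$ (the meaning of ``$N$ sufficiently large''), both $\lambda=+1$ and $\lambda=-1$ are admissible test parameters in \eqref{eq:weakform.discretized}. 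Applying the inequality with both signs yields the two-sided bound $|B_n(\psi)|\leq -A_n$, which rearranges into
\[
\absl{\int_\Omega\rho(\bv^n-\bv^{n-1})\cdot\psi\dd x}\leq \tau\absl{\calN_\gamma(\bv^n,\bbE^n\mid\psi,0)}+\tau\absl{\calB^n(\psi)}+(-A_n).
\]
Taking the supremum over $\|\psi\|_{\calY^{\bv}}\leq 1$ and summing over $n$, the right-hand side is controlled by the a~priori bounds just obtained: $\tau\sum_n\sup_\psi|\calN_\gamma|$ is estimated via $\|\bv_N\|_{L^\infty L^2}$, $\|\nabla\bv_N\|_{L^2L^2}$, $\|\varphi(\bbE_N)\|_{L^\infty L^1}$ and $\|D\varphi(\bbE_N)\|_{L^\infty L^1}$; the forcing part is controlled by $\|\bf\|_{L^1(0,T;L^2(\Omega))}$; and the telescoping estimate $\sum_n(-A_n)\leq\calE(\bv_0,\bbE_0)+\|\bf\|_{L^1L^2}\max_n\|\bv^n\|_{L^2}\leq C$ handles the dissipation contribution. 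This delivers $\sum_{n=2}^N\|\bv^n-\bv^{n-1}\|_{(\calY^{\bv})^*}\leq C$, which is exactly \eqref{eq:aprioriest.increment.v} after unfolding the definition of $\bv_N$. The only subtle point is therefore the admissibility check for $\lambda=\pm 1$, which is why the increment bound is claimed only for $N$ large enough.
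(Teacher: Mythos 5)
Your proposal is correct and follows essentially the same route as the paper: the discrete energy inequality from the test pair $(0,0)$ plus Gronwall yields \eqref{eq:aprioriest.v}--\eqref{eq:aprioriest.phi} and the $L^1$-bound on $\calP$, and the increment bound comes from testing \eqref{eq:weakform.discretized} with small admissible velocity test functions (the paper uses $(-\frac1\rho\psi,0)$ and the lower bound \eqref{eq:F.est}, which is the one-sided version of your $\pm\psi$ argument, the two being equivalent by symmetry of the unit ball of $\calY^{\bv}$). The only cosmetic differences are that the paper obtains the $\BV$-bound on $\calE(\bv_N,\bbE_N)$ via the monotonicity of $H_N(t)=\calE(\bv_N,\bbE_N)+\int_0^t(\calD_\gamma+\calP-\calB_N)\dd t'$ rather than your positive-part/telescoping identity $\sum_n|a_n|=2\sum_n(a_n)^+-\sum_n a_n$, and that the paper keeps the factor $\tau$ from $L^1(\tau,T;(\calY^{\bv})^*)$ explicit throughout; both variants are valid.
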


\begin{proof}
We fix $N\in\N$ and let $k,\ell\in\N_0$ with
$0\leq k< \ell \leq N$.
Summing up of the discrete energy inequality~\eqref{eq:enineq.discrete}
for $n=k+1,\ldots,\ell$,
we obtain
\begin{equation}
\begin{aligned}
\label{eq:energyest.approx}
&\calE(\bv_N,\bbE_N)\Big|_{t^k}^{t^\ell} 
+\int_{t^k}^{t^\ell}
\calD_\gamma(\bv_N,\bbE_N)
+\calP(\dev D\varphi(\bbE_N))\dd t'
\leq \int_{t^k}^{t^\ell} \calB_N(\bv_N)\dd t'
\\
&\qquad\quad
\leq \int_{t^k}^{t^\ell}\int_\Omega \frac{\rho}{2}\abs{\bv_N}^2 + \frac{1}{2\rho}\abs{\bf}^2\dd x\dd t'
\leq \int_{t^k}^{t^\ell} \calE(\bv_N,\bbE_N) + \frac{1}{2\rho}\norm{\bf}_{L^2(\Omega)}^2 \dd t'
\end{aligned}
\end{equation}
by Young's inequality. 
Using Gronwall's lemma, 
we see that 
the left-hand side of~\eqref{eq:energyest.approx}
is uniformly bounded.
Together with the growth assumption from~\ref{ass:phi}
and the Korn-type inequality~\eqref{eq:Korn},
this immediately yields the uniform bounds~\eqref{eq:aprioriest.v},~\eqref{eq:aprioriest.E}, \eqref{eq:aprioriest.phi}
and~\eqref{eq:aprioriest.energy},
except for the bound on $D\varphi(\bbE_N)$ and $\calE(\bv_N,\bbE_N)$.

The bound on $D\varphi(\bbE_N)$ follows
from assumption~\ref{ass:phi}\ref{ass:phi.D3phibounded}
and Taylor's theorem.
Moreover, we conclude from~\eqref{eq:energyest.approx}
that the function $H_N\colon(0,T)\to\R$ with
\[
H_N(t)=\calE(\bv_N(t),\bbE_N(t))
+\int_{0}^t\calD_\gamma(\bv_N,\bbE_N)
+\calP(\dev D\varphi(\bbE_N))-\calB_N(\bv_N)\dd t'
\]
is nonincreasing,
so that $H_N\in \BV([0,T])$ with
\[
\norm{H_N}_{\BV([0,T])}
=\norm{H_N}_{L^1(0,T)}
+\abs{H_N}_{\TV([0,T])}
\leq (1+T)\calE(\bv_0,\bbE_0) + |H_N(T)| \leq C,
\]
due to the uniform bounds we already established.
With a similar argument, we obtain a uniform bound for 
$H_N-\calE(\bv_N,\bbE_N)$ in $\BV([0,T])$.
In this way, we arrive at a uniform bound
$\norm{\calE(\bv_N,\bbE_N)}_{\BV([0,T])}\leq C$.

It remains to verify~\eqref{eq:aprioriest.increment.v}.
By definition of $\bv_N$, we have
\begin{equation}
\label{eq:incrementest.1}
\norm{\bv_N-\bv_N(\cdot-\tau)}_{L^1(\tau,T;(\calY^{\bv})^*)}
=\sum_{n=1}^{N}\tau\norm{\bv^{n}-\bv^{n-1}}_{(\calY^{\bv})^*}.
\end{equation}
To obtain a bound for the right-hand side, 
we consider $\psi\in \calY^{\bv}$ with $\norm{\psi}_{\calY^{\bv}}\leq 1$.
If we take $N\in\N$ large enough, that is,
$\tau>0$ small enough, 
we have $\tau\wt{\calK}(-\frac{1}{\rho}\psi,0)\leq 1$.
Then we can use~\eqref{eq:weakform.discretized}
and~\eqref{eq:F.est}
with $\Psi=0$
and estimate
\[
\begin{aligned}
&\int_\Omega(\bv^n-\bv^{n-1})\cdot\psi\dd x
\\
&
\leq 
-\Lp{1-\tau \wt{\calK}(-\frac{1}{\rho}\psi,0)}\calE(\bv^n,\bbE^n)+\calE(\bv^{n-1},\bbE^{n-1})
\\
&\quad
-\tau \llp{\int_\Omega\gamma|\nabla[D\varphi(\bbE^n)]|^2
-\bf^n\cdot\bv^n
-\frac{1}{\rho}\bbS(\bv^n)\mathbin{:}\bbD(\psi)
+\frac{1}{\rho}\bf^n\cdot\psi
\dd x
+\calP(\dev D\varphi(\bbE^n))}
\\
&\leq \calE(\bv^{n-1},\bbE^{n-1})-\calE(\bv^n,\bbE^n)
+\tau \wt{\calK}(-\frac{1}{\rho}\psi,0)\calE(\bv^n,\bbE^n)
+\tau\norm{\bf^n}_{L^2(\Omega)}\norm{\bv^n}_{L^2(\Omega)}
\\
&\quad
+\frac{\tau}{\rho}\norm{\bbL}_{\calL(\R^{3\times3}_{\sym})}
\norm{\nabla\bv^n}_{L^2(\Omega)}\norm{\nabla\psi}_{L^2(\Omega)}
+\frac{\tau}{\rho}\norm{\bf^n}_{L^2(\Omega)}\norm{\psi}_{L^2(\Omega)}
\\
&\leq
\calE(\bv^{n-1},\bbE^{n-1})-\calE(\bv^n,\bbE^n)
+c\tau \Lp{1+\norm{\nabla\bv^n}_{L^2(\Omega)}
+\norm{\bf^n}_{L^2(\Omega)}\lp{1+\norm{\bv^n}_{L^2(\Omega)}}}
\end{aligned}
\]
for some constant $c>0$,
where $\bbL$ is the linear mapping defining $\bbS$, compare assumption~\ref{ass:viscstress}.
Here we used 
\[
\wt{\calK}(-\frac{1}{\rho}\psi,0)\calE(\bv^n,\bbE^n)
\leq C\sup_{\rho\norm{\chi}_{\calY^{\bv}}\leq 1}\wt{\calK}(\chi,0)<\infty.
\]
We thus conclude
\begin{equation}
\label{eq:incrementest.2}
\begin{aligned}
\norm{\bv^{n}-\bv^{n-1}}_{(\calY^{\bv})^*}
&\leq\calE(\bv^{n-1},\bbE^{n-1})-\calE(\bv^{n},\bbE^{n})
\\
&\quad
+c\tau \Lp{1+\norm{\nabla\bv^n}_{L^2(\Omega)}
+\norm{\bf^n}_{L^2(\Omega)}\lp{1+\norm{\bv^n}_{L^2(\Omega)}}},
\end{aligned}
\end{equation}
and~\eqref{eq:incrementest.1} leads to
\[
\begin{aligned}
&\norm{\bv_N-\bv_N(\cdot-\tau)}_{L^1(\tau,T;(\calY^{\bv})^*)}
\\
&\ 
\leq \sum_{n=1}^{N}\tau\Lb{\calE(\bv^{n-1},\bbE^{n-1})-\calE(\bv^{n},\bbE^{n})
\\
&\qquad
+c\tau \Lp{1+\norm{\nabla\bv^n}_{L^2(\Omega)}
+\norm{\bf^n}_{L^2(\Omega)}\lp{1+\norm{\bv^n}_{L^2(\Omega)}}}}
\\
&\ 
= \tau\Lp{\calE(\bv^{0},\bbE^{0})-\calE(\bv^{N},\bbE^{N})} + c\tau^2 N
\\
&\qquad
+c\tau\int_{0}^{T}\norm{\nabla\bv_N}_{L^2(\Omega)}\dd t
+c\tau\lp{1+\norm{\bv_N}_{L^\infty(0,T;L^2(\Omega))}}
\int_0^T\norm{\bf_N}_{L^2(\Omega)}\dd t'.
\end{aligned}
\]
Due to $N\tau=T$ and the uniform bounds~\eqref{eq:fN.integral} and~\eqref{eq:aprioriest.v},
this shows~\eqref{eq:aprioriest.increment.v}
and completes the proof.
\end{proof}

Under the additional assumption~\ref{ass:P.gamma},
we can derive an estimate for $\bbE$
that is similar to~\eqref{eq:aprioriest.increment.v}.

\begin{lemma}
\label{lem:increment.est.E}
In the situation of Lemma~\ref{lem:aprioriest}
assume that~\ref{ass:P.gamma} is satisfied,
and that $N\in\N$ is sufficiently large.
Then
\begin{equation}
\label{eq:aprioriest.increment.E}
\norm{\bbE_N-\bbE_N(\cdot-\tau)}_{L^1(\tau,T;(\calY^{\bbE})^*)}
\leq C\tau
\end{equation}
for a constant $C>0$ independent of $N\in\N$.
\end{lemma}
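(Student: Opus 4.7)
My plan is to mirror the proof of~\eqref{eq:aprioriest.increment.v}, replacing the test pair $(\psi,0)$ by $(0,\Psi)$ so that the increment $\bbE^n-\bbE^{n-1}$ is exposed in place of $\bv^n-\bv^{n-1}$. By the structure of the piecewise constant prolongation,
\[
\norm{\bbE_N-\bbE_N(\cdot-\tau)}_{L^1(\tau,T;(\calY^{\bbE})^*)}
=\tau\sum_{n=2}^{N}\norm{\bbE^{n}-\bbE^{n-1}}_{(\calY^{\bbE})^*},
\]
so the task reduces to bounding each summand. For any $\Psi\in\calY^{\bbE}$ with $\norm{\Psi}_{\calY^{\bbE}}\leq 1$, the quantity $\wt{\calK}(0,\pm R_0\Psi)$ is uniformly bounded by~\eqref{eq:K.convexify}, hence $\tau\wt{\calK}(0,\pm R_0\Psi)\leq 1$ for $N$ sufficiently large, so that the test pair $(\psi,\Psi)=(0,\pm R_0\Psi)$ is admissible in~\eqref{eq:weakform.discretized}.

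Combining $\calF^n_\tau(\bv^n,\bbE^n\mid 0,\pm R_0\Psi)\leq 0$ with the coercivity estimate~\eqref{eq:F.est}, dropping the favorable non-negative terms $\tau\calP(\dev D\varphi(\bbE^n))$ and $\tau\gamma\norm{\nabla[D\varphi(\bbE^n)]}_{L^2}^2$, and applying Cauchy--Schwarz to the remaining linear pairings with $\Psi$ and $\nabla\Psi$ yields, for either sign,
\begin{align*}
\pm R_0\int_\Omega(\bbE^n-\bbE^{n-1}){:}\Psi\dd x
&\leq \calE(\bv^{n-1},\bbE^{n-1})-\calE(\bv^n,\bbE^n)+\tau\calP(\dev(\pm R_0\Psi))
\\
&\quad+c\tau\Lp{1+\norm{\bf^n}_{L^2}\norm{\bv^n}_{L^2}+\norm{\nabla\bv^n}_{L^2}+\sqrt\gamma\norm{\sqrt\gamma\nabla[D\varphi(\bbE^n)]}_{L^2}}
\end{align*}
with a constant $c>0$ independent of $n$ and $N$. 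Dividing by $R_0$, taking the supremum over $\norm{\Psi}_{\calY^{\bbE}}\leq 1$, multiplying by $\tau$ and summing over $n$, the energy differences telescope to an $O(\tau)$ contribution by~\eqref{eq:aprioriest.energy}, while the remaining sum becomes $c\tau\int_0^T(\cdots)\dd t=O(\tau)$ thanks to~\eqref{eq:fN.integral},~\eqref{eq:aprioriest.v} and~\eqref{eq:aprioriest.E}, using~\ref{ass:phi.quadratic} when $\gamma>0$ so that $\nabla[D\varphi(\bbE^n)]=\bbK[\nabla\bbE^n]$.

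The main obstacle will be the $\calP(\dev(\pm R_0\Psi))$ term, which must be bounded uniformly in $\Psi$ ranging over the unit ball of $\calY^{\bbE}$. Unlike in the proof of~\eqref{eq:aprioriest.increment.v}, where the choice $\Psi=0$ trivially eliminated this contribution via $\calP(0)=0$, here genuine uniform boundedness of $\calP\circ\dev$ in a neighborhood of $0$ in $\calY^{\bbE}$ is needed. This is precisely the role of~\ref{ass:P.gamma}: by the standard result that a proper convex lower semicontinuous functional is continuous, and hence locally bounded, on the interior of its effective domain, the mere finiteness provided by~\ref{ass:P.gamma} upgrades to $\sup\setcl{\calP(\dev(\pm R_0\Psi))}{\norm{\Psi}_{\calY^{\bbE}}\leq 1}<\infty$, possibly after shrinking $R_0$ by a constant factor. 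Apart from this point, the argument is a direct transcription of the one given for~\eqref{eq:aprioriest.increment.v}.
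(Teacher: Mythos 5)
Your proposal is correct and follows essentially the same route as the paper: test the discrete inequality~\eqref{eq:weakform.discretized} with $(0,\pm R\Psi)$ for a radius $R<R_0$ admissible under~\ref{ass:P.gamma}, invoke the lower bound~\eqref{eq:F.est}, drop the nonnegative dissipation terms, take the supremum over the unit ball of $\calY^{\bbE}$, and telescope the energy differences after summing in $n$. The only point where you go beyond the paper is the Baire-category argument showing that a convex, lower semicontinuous functional finite on a ball is uniformly bounded on any strictly smaller ball -- the paper simply asserts that this supremum is finite -- and your justification of that step is valid.
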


\begin{proof}
We proceed as for the derivation of~\eqref{eq:aprioriest.increment.v},
but we have to take into account the domain of $\calP$.
Let $0<R<R_0$ with $R_0$ as in~\ref{ass:P.gamma}.
We now use~\eqref{eq:weakform.discretized}
and~\eqref{eq:F.est}
for $\psi=0$.
For $\Psi\in\calY^{\bbE}$ with 
$\norm{\Psi}_{\calY^{\bbE}}\leq 1$,
we have $\tau\wt{\calK}(0,-\frac{1}{R}\Psi)\leq 1$ if $N=T/\tau$ is sufficiently large.
Writing $\Psi=R \frac{1}{R}\Psi$, we then have
\[
\begin{aligned}
&\int_\Omega(\bbE^{n}-\bbE^{n-1}):\Psi\dd x
\\
&\quad
\leq 
-R\Lp{1-\tau \wt{\calK}(0,-\frac{1}{R}\Psi)}\calE(\bv^n,\bbE^n)+\calE(\bv^{n-1},\bbE^{n-1})
-\tau R\int_\Omega\gamma|\nabla\bbE|^2-\bf\cdot\bv\dd x
\\
&\qquad\quad-\tau R\Lp{\calP(\dev D\varphi(\bbE^n))
-\calP(-\frac{1}{R}\dev\Psi)}
-\tau\int_\Omega
\bbD(\bv^n):\Psi + \gamma \nabla\bbE^n\tcolon\nabla\Psi 
\dd x
\\
&\quad
\leq
R\Lp{\calE(\bv^{n-1},\bbE^{n-1})-\calE(\bv^n,\bbE^n)}
\\
&\qquad\quad
+c\tau \lp{1+\norm{\nabla\bv^n}_{L^2(\Omega)}+\norm{\nabla\bbE^n}_{L^2(\Omega)}
+\norm{\bf^n}_{L^2(\Omega)}\norm{\bv^n}_{L^2(\Omega)}}
\end{aligned}
\]
for some constant $c>0$,
where we used
\[
\begin{aligned}
\wt{\calK}(0,-\frac{1}{R}\Psi)\calE(\bv^n,\bbE^n)
+\calP(-\frac{1}{R}\dev\Psi)
\leq
\sup_{R\norm{\Phi}_{\calY^{\bbE}}\leq 1}
\Lp{C\wt{\calK}(0,\Phi)
+\calP(\dev\Phi)}
<\infty.
\end{aligned}
\]
We thus obtain
\[
\begin{aligned}
\norm{\bbE^{n}-\bbE^{n-1}}_{({\calY^{\bbE}})^*}
&\leq
R\Lp{\calE(\bv^{n-1},\bbE^{n-1})-\calE(\bv^n,\bbE^n)}
\\
&\qquad
+c\tau \lp{1+\norm{\nabla\bv^n}_{L^2(\Omega)}+\norm{\nabla\bbE^n}_{L^2(\Omega)}
+\norm{\bf^n}_{L^2(\Omega)}\norm{\bv^n}_{L^2(\Omega)}}.
\end{aligned}
\]
This estimate resembles~\eqref{eq:incrementest.2}, 
and we arrive at~\eqref{eq:aprioriest.increment.E} 
along the same lines as in the previous proof.
\end{proof}

\section{Passage to the time-continuous limit}
\label{sec:limit.passage}

From the approximate solutions $(\bv_N,\bbE_N)$ we can extract convergent subsequences.
While we can identify the limit 
to be a weak solution for $\gamma>0$,
the limit object is an energy-variational solution for $\gamma=0$.
In this way, we complete the proofs of Theorem~\ref{thm:existence.gamma}
and Theorem~\ref{thm:existence.envar}.

\subsection{Convergent subsequences}

From the uniform bounds derived in Lemma~\ref{lem:aprioriest} and Lemma~\ref{lem:increment.est.E}, 
we conclude the existence of convergent subsequences.

\begin{lemma}\label{lem:convergence}
Let $\gamma\geq 0$, and
let~\ref{ass:Omega}--\ref{ass:PDphi} be satisfied.
If $\gamma>0$, further assume~\ref{ass:phi.quadratic} and~\ref{ass:P.gamma}.
There exist functions 
\[
\begin{aligned}
\bv&\in L^\infty(0,T;L^2(\Omega;\R^3))\cap L^2(0,T;H^1(\Omega;\R^3)),
\\
\bbE&\in L^\infty(0,T;L^2(\Omega;\R^{3\times3}_{\sym})),
\qquad
E\in\BV([0,T]),
\end{aligned}
\]
and a (not relabeled) subsequence of $(\bv_N,\bbE_N)$
such that
\begin{align}
\bv_N
&\tows\bv 
&&\text{in } L^\infty(0,T;L^2(\Omega;\R^3)),
\label{eq:conv.v.ws}
\\
\nabla\bv_N
&\tow \nabla\bv
&&\text{in } L^2(0,T;L^2(\Omega;\R^{3\times3})),
\label{eq:conv.dv.w}
\\
\bv_N&\to\bv 
&&\text{in }L^p(0,T;L^q(\Omega;\R^3))\text{ for all } p\in[1,\infty),\,q\in[1,6),
\label{eq:conv.v.s}
\\
\bbE_N
&\tows\bbE 
&&\text{in } L^\infty(0,T;L^2(\Omega;\R^{3\times3}_{\sym})),
\label{eq:conv.E.ws}
\\
\calE(\bv_N,\bbE_N)
&\tows E 
&&\text{in }\BV([0,T])
\label{eq:conv.energy}
\end{align}
as $N\to\infty$.
If $\gamma>0$, then it further holds
\[
\bbE\in L^2(0,T;H^1(\Omega;\R^{3\times3}_{\sym})), 
\qquad
E=\calE(\bv,\bbE)\text{ a.e.~in }(0,T),
\]
and
\begin{align}
\nabla\bbE_N
&\tow\nabla\bbE
&&\text{in } L^2(0,T;L^2(\Omega;\R^{3\times3\times3})),
\label{eq:conv.dE.w}
\\
\bbE_N&\to\bbE 
&&\text{in }L^p(0,T;L^q(\Omega;\R^{3\times3}_{\sym})) \text{ for all } p\in[1,\infty),\,q\in[1,6).
\label{eq:conv.E.s}
\end{align}
\end{lemma}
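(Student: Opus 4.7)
The weak-$*$ and weak convergences \eqref{eq:conv.v.ws}, \eqref{eq:conv.dv.w}, \eqref{eq:conv.E.ws} (and \eqref{eq:conv.dE.w} when $\gamma>0$) are immediate consequences of the uniform bounds \eqref{eq:aprioriest.v} and \eqref{eq:aprioriest.E} together with the Banach--Alaoglu theorem, extracting along a diagonal subsequence. For \eqref{eq:conv.energy}, I would invoke Helly's selection theorem applied to the uniform $\BV([0,T])$ bound \eqref{eq:aprioriest.energy}: this yields, up to a further subsequence, an element $E\in\BV([0,T])$ and pointwise convergence outside a countable set, which upgrades to weak-$*$ convergence in $\BV([0,T])$.

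For the strong convergence \eqref{eq:conv.v.s}, I would apply the Aubin--Lions--Simon lemma to the chain $H^1(\Omega) \hookrightarrow\hookrightarrow L^q(\Omega) \hookrightarrow (\calY^{\bv})^*$ valid for $q\in[1,6)$. The uniform $L^2(0,T;H^1)$-bound from \eqref{eq:aprioriest.v} combined with the translate estimate \eqref{eq:aprioriest.increment.v} provides the two hypotheses of the lemma, yielding strong convergence in $L^2(0,T;L^q(\Omega))$. The full range $p\in[1,\infty)$ then follows by interpolation with the uniform $L^\infty(0,T;L^2)$-bound. When $\gamma>0$, the same argument using \eqref{eq:aprioriest.increment.E} in place of \eqref{eq:aprioriest.increment.v}, and the $L^2(0,T;H^1)$-bound on $\bbE_N$ from \eqref{eq:aprioriest.E}, gives \eqref{eq:conv.E.s}.

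The main obstacle is the identification $E=\calE(\bv,\bbE)$ a.e.\ in the case $\gamma>0$. Passing to a further subsequence, \eqref{eq:conv.v.s} (with $p=q=2$) and dominated convergence give pointwise a.e.\ convergence $\int_\Omega \tfrac{\rho}{2}|\bv_N|^2\dd x \to \int_\Omega \tfrac{\rho}{2}|\bv|^2\dd x$ in $(0,T)$. For the elastic part, the strong convergence \eqref{eq:conv.E.s}, the continuity of $\varphi$, and the at-most-quadratic growth of $\varphi$ furnished by the bound $|D^3\varphi|\leq M$ from \ref{ass:phi}\ref{ass:phi.D3phibounded} together with the $L^\infty(0,T;L^2)$ bound \eqref{eq:aprioriest.E} allow a Vitali/dominated-convergence argument delivering $\int_\Omega \varphi(\bbE_N)\dd x \to \int_\Omega\varphi(\bbE)\dd x$ pointwise a.e. Hence $\calE(\bv_N,\bbE_N)\to\calE(\bv,\bbE)$ pointwise a.e.\ in $(0,T)$, and comparing with the weak-$*$ $\BV$-limit \eqref{eq:conv.energy} forces $E=\calE(\bv,\bbE)$ a.e. For $\gamma=0$, only weak convergence \eqref{eq:conv.E.ws} is available for $\bbE_N$, so one obtains merely $\calE(\bv,\bbE)\leq E$ by lower semicontinuity of $\varphi$; this is precisely the reason why one must work with the energy-variational solution concept in that case.
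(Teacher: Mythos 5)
Your argument follows the paper's proof almost exactly: Banach--Alaoglu for the weak and weak* limits, a discrete Aubin--Lions compactness argument combining the gradient bounds with the translate estimates \eqref{eq:aprioriest.increment.v}, \eqref{eq:aprioriest.increment.E}, and interpolation with the $L^\infty(0,T;L^2)$ bounds to reach all $p\in[1,\infty)$, $q\in[1,6)$. Two minor points. First, the classical Aubin--Lions--Simon lemma does not apply verbatim here, since \eqref{eq:aprioriest.increment.v} controls only the single translate by the $N$-dependent step $\tau=T/N$, not all translates uniformly in $N$; for piecewise constant interpolants one either telescopes this into a uniform translate bound or, as the paper does, cites the Dreher--J\"ungel version of the theorem tailored to piecewise constant functions. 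Second, the bound $|D^3\varphi|\leq M$ from \ref{ass:phi}\ref{ass:phi.D3phibounded} yields at most \emph{cubic} growth of $\varphi$ (quadratic growth of $D\varphi$), not quadratic growth of $\varphi$ as you assert; your identification $E=\calE(\bv,\bbE)$ for $\gamma>0$ nevertheless goes through because in that case \ref{ass:phi.quadratic} is assumed, so $\varphi$ is exactly quadratic and $\int_\Omega\varphi(\bbE_N)\dd x\to\int_\Omega\varphi(\bbE)\dd x$ in $L^1(0,T)$ follows directly from \eqref{eq:conv.E.s} and the uniform $L^\infty(0,T;L^2)$ bound. With these corrections your proof is complete; indeed it is slightly more detailed than the paper's, which leaves the identification $E=\calE(\bv,\bbE)$ implicit.
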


\begin{proof}
The existence of convergent subsequences
as indicated in~\eqref{eq:conv.v.ws}, \eqref{eq:conv.dv.w}, \eqref{eq:conv.E.ws}, \eqref{eq:conv.energy} and, if $\gamma>0$,~\eqref{eq:conv.dE.w}
follows from the 
uniform bounds in Lemma~\ref{lem:aprioriest} and Lemma~\ref{lem:increment.est.E}
by the Banach--Alaoglu theorem.
To conclude the strong convergence~\eqref{eq:conv.v.s},
we combine the convergence~\eqref{eq:conv.dv.w}
with the uniform bound~\eqref{eq:aprioriest.increment.v}
and use the version of the Aubin--Lions--Simon theorem
for piecewise constant functions
due to Dreher and J\"ungel~\cite[Theorem 5]{DreherJuengel2012compactpwconst}.
This yields~\eqref{eq:conv.v.s} for $p=q=2$,
and by interpolation using the uniform bound~\eqref{eq:aprioriest.v}
and the embedding $H^1(\Omega)\hookrightarrow L^6(\Omega)$,
this generalizes to $p\in[1,\infty)$, $q\in[1,6)$.
In the same way, 
we conclude the strong convergence~\eqref{eq:conv.E.s}
from~\eqref{eq:conv.dE.w} and the uniform bounds~\eqref{eq:aprioriest.E} and~\eqref{eq:aprioriest.increment.E}
if $\gamma>0$ and~\ref{ass:P.gamma} hold.
\end{proof}

\begin{remark}
If $\gamma>0$, the convergence~\eqref{eq:conv.dE.w} and~\eqref{eq:conv.E.s}
imply the weak convergence
\[
\nabla[D\varphi(\bbE_N)]
=\Lp{D^2\varphi(\bbE_N)[\partial_j\bbE_N]}_j
\tow\Lp{D^2\varphi(\bbE)[\partial_j\bbE]}_j
=\nabla[D\varphi(\bbE)]
\]
in $L^2(0,T;L^2(\Omega;\R^{3\times3\times3}))$
if $D^2\varphi$ is merely bounded. 
In particular, it is not necessary
to assume that $\varphi$ is quadratic
in order to pass to the limit $N\to\infty$.
Indeed, this assumption is only used for the construction
of approximate solutions by the previous saddle-point argument,
compare Remark~\ref{rem:phi.quadratic.adaption}.
\end{remark}

In order to pass to the limit $N\to\infty$ in~\eqref{eq:discrete.integrated}, 
we further collect elementary convergence properties
of the test functions.
\begin{lemma}
\label{lem:convergence.testfct}
Let $\phi\in C_c^1([0,T))$, $(\psi,\Psi)\in C_c^1([0,T];\calY)$.
Then it holds
\[
\begin{aligned}
\phi_N&\to \phi  
&&\text{in }L^\infty(0,T),
&\partial_t\wh\phi_N&\to \partial_t \phi 
&&\text{in }L^\infty(0,T)
\\
\psi(\cdot+\tau) 1_{(0,T-\tau)}&\to\psi
&&\text{in }L^\infty(0,T;\calY^{\bv}),
&\delta_\tau\psi&\to\partial_t\psi
&&\text{in }L^\infty(0,T;\calY^{\bv}),
\\
\Psi(\cdot+\tau)1_{(0,T-\tau)}&\to\Psi
&&\text{in }L^\infty(0,T;\calY^{\bbE}),
&\delta_\tau\Psi&\to\partial_t\Psi
&&\text{in }L^\infty(0,T;\calY^{\bbE}),
\\
\frac{1}{\tau}\int_0^\tau\psi(t)\dd t &\to \psi(0)
&&\text{in }\calY^{\bv},
&\quad\frac{1}{\tau}\int_0^\tau\Psi(t)\dd t &\to \Psi(0)
&&\text{in }\calY^{\bbE},
\end{aligned}
\]
as $N=\frac{T}{\tau}\to\infty$.
\end{lemma}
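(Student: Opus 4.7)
My plan is to establish all eight convergences uniformly in $t$ by exploiting the $C^1$-regularity of the test functions on the compact interval $[0,T]$, which yields uniform continuity of both the functions and their time derivatives as maps into the Banach spaces $\calY^{\bv}$ and $\calY^{\bbE}$. Since $\calY^{\bv}, \calY^{\bbE}$ carry $C^1$-norms taken pointwise in $x$, Banach-space-valued uniform continuity on $[0,T]$ follows directly from continuity of the maps $[0,T]\to\calY$, so the arguments reduce to the scalar setting.

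For the piecewise constant prolongations $\phi_N$, $\psi(\cdot+\tau)1_{(0,T-\tau)}$, $\Psi(\cdot+\tau)1_{(0,T-\tau)}$, the key estimate is
\[
\norml{\phi_N(t)-\phi(t)} \leq \sup_{\abs{s-t}\leq\tau}\norml{\phi(s)-\phi(t)},
\]
with analogous estimates for $\psi$ and $\Psi$; the right-hand sides tend to zero uniformly in $t$ as $\tau\to0$ by uniform continuity of the respective test functions.

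For the difference-quotient convergences $\partial_t\wh\phi_N\to\partial_t\phi$ and $\delta_\tau\psi\to\partial_t\psi$, $\delta_\tau\Psi\to\partial_t\Psi$, I would use the integral identity $\delta_\tau f(t)=\tau^{-1}\int_t^{t+\tau}\partial_s f(s)\dd s$, available because $f$ is $C^1$ in time. This yields
\[
\norml{\delta_\tau f(t)-\partial_t f(t)} \leq \sup_{\abs{s-t}\leq\tau}\norml{\partial_s f(s)-\partial_t f(t)},
\]
which tends to zero uniformly in $t$ by uniform continuity of $\partial_t f$ on $[0,T]$. The same formula applies to $\partial_t\wh\phi_N$ on each subinterval $(t^n,t^{n+1})$.

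For the initial averages, I would write $\tau^{-1}\int_0^\tau\psi(t)\dd t-\psi(0)=\tau^{-1}\int_0^\tau(\psi(t)-\psi(0))\dd t$ and bound the norm of this quantity by $\sup_{t\in[0,\tau]}\norml{\psi(t)-\psi(0)}_{\calY^{\bv}}$, which tends to zero by continuity of $\psi$ at $0$; the argument for $\Psi$ is identical. No genuine obstacle arises in this lemma: it is a routine collection of uniform continuity arguments on a compact interval, and the only mild care required concerns the bookkeeping of the different types of approximation ($\phi_N$ piecewise constant on $[0,T+\tau)$, $\wh\phi_N$ piecewise linear on $[0,T]$) and confirming that the Bochner setting inherits the scalar estimates through the pointwise-in-$x$ character of the $C^1$-norms on $\calY^{\bv}$ and $\calY^{\bbE}$.
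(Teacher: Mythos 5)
Your proposal is correct and is exactly the argument the paper has in mind: the paper's proof consists of the single sentence that the claims follow from uniform continuity of the test functions and their derivatives on the compact interval $[0,T]$, and you have simply written out the standard modulus-of-continuity estimates (for the piecewise constant shifts, the difference quotients via the integral representation $\delta_\tau f(t)=\tau^{-1}\int_t^{t+\tau}\partial_s f(s)\dd s$, and the initial averages) that make this precise. No gap; the approach coincides with the paper's.
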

\begin{proof}
Due to the uniform continuity of the functions and their derivatives, 
the proof is elementary.
\end{proof}

\subsection{Limit passage for the regularized system}
\label{subsec:limit.weak}

We first focus on the case $\gamma>0$,
where we want to obtain a weak solution 
in the sense of Defintion~\ref{def:weaksol.gamma}.
To this end, we pass to the limit $N\to\infty$ 
in the equation~\eqref{eq:discrete.integrated}

\begin{proof}[Proof of Theorem~\ref{thm:existence.gamma}]
We use Lemma~\ref{lem:convergence} 
and Lemma~\ref{lem:convergence.testfct}
to pass to the limit in~\eqref{eq:discrete.integrated}.
Since $\gamma>0$, 
we have the weak* convergence $\calE(\bv_N,\bbE_N)\tows\calE(\bv,\bbE)$ in $\BV([0,T])$.
Together with the weak* convergence $\bv_N\tows\bv$ and $\bbE_N\tows\bbE$
from~\eqref{eq:conv.v.ws} and~\eqref{eq:conv.E.ws},
the fact that $\bf_N\to\bf$ strongly in $L^1(0,T;L^2(\Omega))$,
and the convergence properties from Lemma~\ref{lem:convergence.testfct},
the limit passage in the first, second and forth line of~\eqref{eq:discrete.integrated}
is direct.
For the third line 
we invoke the weak lower semicontinuity of norms and of $\calP$ by assumption~\ref{ass:P},
so that we have
\[
\begin{aligned}
\liminf_{N\to\infty}
&\int_0^T\phi_N\Lp{\calD_\gamma(\bv_N,\bbE_N)
+\calP(\dev D\varphi(\bbE_N))
-\calP(\dev\Psi)}\dd t
\\
&\geq \int_0^T\phi\Lp{\calD_\gamma(\bv,\bbE)
+\calP(\dev D\varphi(\bbE))
-\calP(\dev\Psi)}\dd t.
\end{aligned}
\]
It remains to discuss the term related with $\calN_\gamma$ defined in~\eqref{eq:DN.def}.
By the weak convergence properties established in Lemma~\ref{lem:convergence},
we can directly pass to the limit in the linear terms.
Analogously, 
the strong convergence from~\eqref{eq:conv.E.s}
allows to pass to the limit
in the terms related with $\varphi$ and $D\varphi$,
which are now quadratic and linear functions, respectively. 
For the remaining terms,
which are quadratic in $(\bv,\bbE)$,
we can use a standard argument 
combining the weak convergence from~\eqref{eq:conv.dv.w}
with the strong convergence from~\eqref{eq:conv.v.s} and~\eqref{eq:conv.E.s}.

In summary, passage to the limit in~\eqref{eq:discrete.integrated}
leads to
\begin{equation}
\label{eq:weakform.variational.pos}
\begin{aligned}
&\int_0^T-\partial_t\phi\Lb{
\calE(\bv,\bbE)
-\int_\Omega\rho\bv\cdot\psi + \bbE:\Psi
}\dd t
\\
&\quad
+\int_0^T\phi(t)\int_\Omega\rho\bv\cdot\partial_t\psi
+\bbE:\partial_t\Psi\dd x\dd t
\\
&\quad
+\int_0^T\phi\Lp{\calD_\gamma(\bv,\bbE)
+\calP(\dev D\varphi(\bbE))
+\calN_\gamma(\bv,\bbE\mid\psi,\Psi)
-\calP(\dev\Psi)}\dd t
\\
&\quad-\phi(0)\Lb{\calE(\bv_0,\bbE_0)-\int_\Omega\bv_0\cdot\psi(0)+\bbE_0:\Psi(0)\dd x}
\leq 0
\end{aligned}
\end{equation}
for all $(\psi,\Psi)\in C^1([0,T];\calY)$
and $\phi\in C_c^1([0,T))$ with $\phi\geq 0$.
In virtue of Lemma~\ref{lem:ineq.pointwisevariational},
this means that $(\bv,\bbE)$ is a weak solution in the sense of Defintion~\ref{def:weaksol.gamma},
compare Remark~\ref{rem:weakform.variational}
\end{proof}

\subsection{Limit passage for the original system}
\label{subsec:limit.envar}

Now we consider the case $\gamma=0$,
where we want to obtain an energy-variational solution in the sense of 
Definition~\ref{def:envar}.
As 
we have neither strong convergence of $\bbE$
nor the identity $E=\calE(\bv,\bbE)$
for $\gamma=0$, 
we make use of the regularity weight $\calK$ from~\eqref{eq:K.lsc}
to pass to the limit in~\eqref{eq:discrete.integrated} by weak lower semicontinuity.

We define $E_N\coloneqq \calE(\bv_N,\bbE_N)$ in $(0,T)$ 
and $E_0\coloneqq \calE(\bv_0,\bbE_0)$,
and we write~\eqref{eq:discrete.integrated} as
\begin{equation}
\label{eq:discrete.integrated.convexified}
\begin{aligned}
&\int_\tau^T-\partial_t\wh\phi_N(t)\Lb{
E_N(t)
-\int_\Omega\rho\bv_N(t)\cdot\psi(t+\tau) + \bbE_N(t):\Psi(t+\tau)
}\dd t
\\
&\ 
+\int_\tau^T\phi_N\Lp{\int_\Omega\rho\bv_N\cdot\delta_\tau\psi
+\bbE_N:\delta_\tau\Psi\dd x
+\calP(\dev D\varphi(\bbE_N))-\calP(\dev\Psi))}\dd t
\\
&\ 
-\int_0^T\phi_N\calB_N(\bv_N-\psi)\dd t
\\
&\ 
+\int_0^T\phi_N\Lp{\calD_0(\bv_N)
+\calN_0(\bv_N,\bbE_N\mid\psi,\Psi)
+\mathcal K(\psi,\Psi)\lp{\calE(\bv_N,\bbE_N)-E_N}
}\dd t
\\
&\
+\phi(0)\Lb{E_0
-\int_\Omega\bv_0\cdot\Lp{\frac{1}{\tau}\int_0^\tau\psi(t)\dd t}
+\bbE_0:\Lp{\frac{1}{\tau}\int_0^\tau\Psi(t)\dd t} \dd x}
\leq 0.
\end{aligned}
\end{equation}
Observe that in the forth line,
we introduced a term that equals zero and is related to $\calK$.
Taking $\calK$ as in~\eqref{eq:K.lsc}
ensures convexity of certain terms,
so that we can pass to the limit $N\to\infty$ in inequality~\eqref{eq:discrete.integrated.convexified} 
by lower semicontinuity. 
More precisely, we pass to the limit in the second part of this new term
by using the weak* convergence of $(E_N)$ in $\BV([0,T])$,
while the first term combines with the certain terms
to ensure weak lower semicontinuity 
by the following convexity property.

\begin{lemma}\label{lem:K.giveslsc}
Let $\calK$ be as in~\eqref{eq:K.lsc}. For each $(\psi,\Psi)\in\calY$, the functional
$\calJ_{(\psi,\Psi)}\colon\calX_0\to\R$ with
\[
\begin{aligned}
\calJ_{(\psi,\Psi)}(\bv,\bbE)&=
\int_\Omega-\np{D\varphi(\bbE)+\varphi(\bbE)\bbI}:\nabla\psi 
+ (\dv\bv)\bbE:\Psi -(\bbE\bbW(\bv)-\bbW(\bv)\bbE):\Psi
\dd x
\\
&\quad+\calD_0(\bv)
+\mathcal K(\psi,\Psi)\calE(\bv,\bbE)
\end{aligned}
\]
is convex and lower semicontinuous, and thus weakly lower semicontinous.
\end{lemma}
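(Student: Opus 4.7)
The plan is to proceed in direct analogy with the proof of Lemma~\ref{lem:Kconvexifies}: I would decompose $\calJ_{(\psi,\Psi)}$ into a sum of integral functionals, each of whose integrand is pointwise convex in $(\bv,\bbE)$ once a suitable portion of the ``reservoir'' $\calD_0(\bv)+\calK(\psi,\Psi)\calE(\bv,\bbE)$ is allocated to it. The three summands of the regularity weight~\eqref{eq:K.lsc} are chosen so that each of the three non-trivial contributions in $\calJ_{(\psi,\Psi)}$ is covered: the coefficient $\tfrac{M}{\kappa}\|\nabla\psi\|_{L^\infty}$ handles the $D\varphi(\bbE){:}\nabla\psi$ term, $\|\dv\psi\|_{L^\infty}$ handles the $\varphi(\bbE)\dv\psi$ term, and the quadratic $\tfrac{5}{\kappa\mu}\|\Psi\|_{L^\infty}^2$ piece absorbs the bilinear couplings with $\Psi$.

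Concretely, after invoking the strong convexity~\ref{ass:phi}\ref{ass:phi.strconv} to substitute $\varphi(\bbE)\geq\tfrac{\kappa}{2}|\bbE|^2$ wherever a quadratic lower bound is needed, the three mechanisms work as follows. The Hessian in $\bbE$ of
\[
\bbE\mapsto -D\varphi(\bbE){:}\nabla\psi+\tfrac{M}{2}\|\nabla\psi\|_{L^\infty(\Omega)}|\bbE|^2
\]
equals $-D^3\varphi(\bbE){:}\nabla\psi+M\|\nabla\psi\|_{L^\infty(\Omega)}\bbI$ and is positive semidefinite by~\ref{ass:phi}\ref{ass:phi.D3phibounded}, so this expression is convex in $\bbE$. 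The combination $-\varphi(\bbE)\dv\psi+\|\dv\psi\|_{L^\infty(\Omega)}\varphi(\bbE)=(\|\dv\psi\|_{L^\infty(\Omega)}-\dv\psi)\varphi(\bbE)$ is a non-negative multiple of the convex function $\varphi$, hence convex. The bilinear terms $(\dv\bv)\bbE{:}\Psi$ and $-(\bbE\bbW(\bv)-\bbW(\bv)\bbE){:}\Psi$ are estimated pointwise by Young's inequality, spending a fraction of $\mu|\nabla\bv|^2$ made available by $\calD_0$ through the Korn-type bound~\eqref{eq:Korn}, and a fraction of $\tfrac{5}{2\mu}\|\Psi\|_{L^\infty(\Omega)}^2|\bbE|^2$ drawn from the $\|\Psi\|_{L^\infty}^2$ part of $\calK\cdot\calE$; as in the proof of Lemma~\ref{lem:Kconvexifies}, these cross terms, combined with their absorbing partners, form non-negative quadratic integrands.

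The main technical obstacle is the bookkeeping of Young's inequality: one has to verify that the single quota $\mu\int_\Omega|\nabla\bv|^2\dd x$ available from $\calD_0$ and the single quota $\tfrac{5}{2\mu}\|\Psi\|_{L^\infty(\Omega)}^2\int_\Omega|\bbE|^2\dd x$ extracted from $\calK\cdot\calE$ are simultaneously sufficient to dominate both bilinear contributions with the constants produced by Young, and this is precisely what fixes the numerical values $\tfrac{5}{\kappa\mu}$ and the choice of splitting of $\mu|\nabla\bv|^2$ in~\eqref{eq:K.lsc}. Once this is carried out, $\calJ_{(\psi,\Psi)}$ is exhibited as a finite sum of integrals of convex integrands bounded below by an integrable function, so that convexity on $\calX_0$ is immediate and (strong) lower semicontinuity follows from the classical Ioffe-type criterion; since $\calX_0$ is reflexive and $\calJ_{(\psi,\Psi)}$ is convex, strong and weak lower semicontinuity coincide, which concludes the argument.
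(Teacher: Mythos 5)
Your proposal is correct and follows essentially the same route as the paper: the same decomposition of $\calJ_{(\psi,\Psi)}$, the same use of \ref{ass:phi}\ref{ass:phi.strconv} and \ref{ass:phi}\ref{ass:phi.D3phibounded} to convexify the $D\varphi(\bbE){:}\nabla\psi$ and $\varphi(\bbE)\dv\psi$ terms, and the same Young-inequality bookkeeping splitting $\calD_0$ into two halves and the $\tfrac{5}{\kappa\mu}\norm{\Psi}_{L^\infty}^2$ budget into $\tfrac{1}{\kappa\mu}+\tfrac{4}{\kappa\mu}$ for the two bilinear couplings. The only cosmetic difference is that you invoke an Ioffe-type criterion plus Mazur for the lower semicontinuity where the paper argues directly via nonnegative quadratic integrands, which changes nothing of substance.
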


\begin{proof}
We split the functional $\calJ_{(\psi,\Psi)}$
into several parts.
Firstly, from~\ref{ass:phi} we obtain the 
convexity of
\[
\begin{aligned}
\bbE&\mapsto-\varphi(\bbE)\dv\psi + \norm{\dv\psi}_{L^\infty(\Omega)}\varphi(\bbE)
=\lp{\norm{\dv\psi}_{L^\infty(\Omega)}-\dv\psi}\varphi(\bbE),
\\
\bbE&\mapsto -D\varphi(\bbE):\nabla\psi 
+ \frac{M}{\kappa}\norm{\nabla\psi}_{L^\infty(\Omega)}\varphi(\bbE)
\\
& \quad
=-D\varphi(\bbE):\nabla\psi 
+ \frac{M}{\kappa}\norm{\nabla\psi}_{L^\infty(\Omega)}\frac{\kappa}{2}|\bbE|^2
+\frac{M}{\kappa}\norm{\nabla\psi}_{L^\infty(\Omega)}\Lp{\varphi(\bbE)-\frac{\kappa}{2}|\bbE|^2},
\end{aligned}
\]
which implies convexity and lower semicontinuity of the associated integral functional.
Secondly, by Young's inequality, we have
\[
\begin{aligned}
(\bv,\bbE)&\mapsto
(\dv\bv)\bbE:\Psi
+\frac{1}{2}\bbS(\bv):\bbD(\bv) 
+\frac{1}{\kappa\mu}\norm{\Psi}_{\infty}^2\,\varphi(\bbE)
\\
&
\geq -\abs{\nabla\bv}\,\abs{\bbE}\,\norm{\Psi}_\infty 
+\frac{\mu}{2}\abs{\nabla\bv}^2 
+ \frac{1}{2\mu}\norm{\Psi}_{\infty}^2\abs{\bbE}^2
+ \frac{1}{\kappa\mu}\norm{\Psi}_{L^\infty(\Omega)}^2\Lp{\varphi(\bbE)-\frac{\kappa}{2}\abs{\bbE}^2}
\geq 0, 
\\
(\bv,\bbE)&\mapsto
-(\bbE\bbW(\bv)-\bbW(\bv)\bbE):\Psi
+\frac{1}{2}\bbS(\bv):\bbD(\bv)
+\frac{4}{\kappa\mu}\norm{\Psi}_{\infty}^2\,\varphi(\bbE)
\\
&
\geq-2\abs{\bbE}\,\abs{\nabla\bv}\,\norm{\Psi}_\infty
+\frac{\mu}{2}\abs{\nabla\bv}^2 
+\frac{2}{\mu}\norm{\Psi}_{\infty}^2\abs{\bbE}^2
+\frac{4}{\kappa\mu}\norm{\Psi}_{\infty}^2\Lp{\varphi(\bbE)-\frac{\kappa}{2}\abs{\bbE}^2}
\geq 0.
\end{aligned}
\]
Therefore, both functions are quadratic and nonnegative, 
and thus convex.
This implies the weak lower semicontinuity of the associated integral functional. 
Summing up and recalling the definition of $\calD_\gamma$ 
from~\eqref{eq:DN.def}
with $\gamma=0$,
wee conclude that $\calJ_{(\psi,\Psi)}$
is also convex and lower semicontinuous.
\end{proof}

\begin{remark}
As $\varphi$ is non-negative, the function
\[
\bbE\mapsto-\varphi(\bbE)\dv\psi + \norm{\max\set{\dv\psi,0}}_{L^\infty(\Omega)}\varphi(\bbE)
\]
is also convex. Therefore, one could improve $\mathcal K$ from~\eqref{eq:K.lsc}
by taking into account only the non-negative part of $\dv\psi$.
Such an optimized regularity weight 
would be in line with the choices considered in~\cite{EitLas24envarhyp,EiterSchindler25}
for instance.
However, a similar improvement seems to be unavailable for the other terms in~\eqref{eq:K.lsc},
which is why we omit it here.
\end{remark}

We can now finalize the proof of Theorem~\ref{thm:existence.envar}.

\begin{proof}[Proof of Theorem~\ref{thm:existence.envar}]
We use the convergence properties established in Lemma~\ref{lem:convergence}
(for $\gamma=0$) 
and Lemma~\ref{lem:convergence.testfct}
in order to pass to the limit in~\eqref{eq:discrete.integrated.convexified}.
The first, third and fifth line in~\eqref{eq:discrete.integrated.convexified}
are linear in $\bv_N$ and $\bbE_N$,
and we can directly pass to the limit.
For the second line of~\eqref{eq:discrete.integrated.convexified},
we combine a similar argument with assumption~\ref{ass:PDphi},
which yields
\[
\liminf_{N\to\infty}\int_\tau^T\phi_N\calP(\dev D\varphi(\bbE_N)) \dd t
\geq \int_0^T\phi\,\calP(\dev D\varphi(\bbE)) \dd t.
\]
To pass to the limit in the forth line of inequality~\eqref{eq:discrete.integrated.convexified},
we first use the strong convergence from~\eqref{eq:conv.v.s}.
A combination with Lemma~\ref{lem:convergence.testfct}
and the weak and weak* convergence from~\eqref{eq:conv.dv.w} and~\eqref{eq:conv.E.ws}
yields
\[
\begin{aligned}
&\lim_{N\to\infty}\int_0^T\phi_N\int_\Omega
\frac{\rho}{2}(\dv\bv_N)(\bv_N\cdot\psi)
+\rho (\bv_N\otimes\bv_N){:}\nabla\psi 
\dd x \dd t
\\
&\qquad+\lim_{N\to\infty}\int_0^T\phi_N\int_\Omega
-\bbS(\bv_N){:}\bbD(\psi)
+\bv_N\cdot\nabla \Psi{:}\bbE_N
+\bbD(\bv_N){:}\Psi\dd x \dd t
\\
&=\int_0^T\phi\int_\Omega
\frac{\rho}{2}(\dv\bv)(\bv\cdot\psi)
+\rho (\bv\otimes\bv){:}\nabla\psi 
-\bbS(\bv){:}\bbD(\psi)
+\bv\cdot\nabla \Psi{:}\bbE
+\bbD(\bv){:}\Psi\dd x \dd t
\end{aligned}
\]
Combining the convexity and lower semicontinuity of $\calJ_{(\psi,\Psi)}$
by Lemma~\ref{lem:K.giveslsc}
with Fatou's lemma,
we also deduce weak lower semicontinuity of the time-integrated functional,
which implies
\[
\liminf_{N\to\infty}\int_0^T \phi_N\calJ_{(\psi,\Psi)}(\bv_N,\bbE_N)\dd t
\geq\int_0^T \phi\,\calJ_{(\psi,\Psi)}(\bv,\bbE)\dd t.
\]
Moreover, 
from~\eqref{eq:conv.energy} we conclude
\[
\lim_{N\to\infty}\int_0^T -\phi_N \calK(\psi,\Psi)E_N
=\int_0^T -\phi \,\calK(\psi,\Psi)E.
\]
Recalling the definition of
and $\calN_\gamma$ from~\eqref{eq:DN.def} (with $\gamma=0$),
we can thus pass to the limit inferior in the forth line of~\eqref{eq:discrete.integrated.convexified}.
We thus obtain
\begin{equation}
\label{eq:weakform.variational.zero}
\begin{aligned}
&\int_0^T-\partial_t\phi\Lb{
E
-\int_\Omega\rho\bv\cdot\psi + \bbE:\Psi
}\dd t
\\
&\quad
+\int_0^T\phi\Lp{\int_\Omega\rho\bv\cdot\partial_t\psi
+\bbE:\partial_t\Psi\dd x
+\calP(\dev D\varphi(\bbE))
-\calP(\dev\Psi)}\dd t
\\
&\quad
+\int_0^T\phi\Lp{
-\calB(\bv-\psi)
+\calD_0(\bv)
+\calN_0(\bv,\bbE\mid\psi,\Psi)
+\mathcal K(\psi,\Psi)\lp{\calE(\bv,\bbE)-E}
}\dd t
\\
&\quad+\phi(0)\Lb{E_0
-\int_\Omega\bv_0\cdot\psi(0)+\bbE_0:\Psi(0)\dd x}
\leq 0.
\end{aligned}
\end{equation}
Employing Lemma~\ref{lem:ineq.pointwisevariational},
we deduce that $(\bv,\bbE,E)$ is an energy-variational solution 
in the sense of Definition~\ref{def:envar}
with $E(0)=\calE(\bv_0,\bbE_0)$.
\end{proof}

\subsection*{Acknowledgments}
This research has been funded by Deutsche
Forschungsgemeinschaft (DFG) through grant CRC 1114 ``Scaling Cascades in
Complex Systems'', Project Number 235221301, Project YIP,
and supported through grant SPP 2410 ``Hyperbolic Balance Laws in Fluid Mechanics:~Complexity, Scales, Randomness (CoScaRa)'', Project Number
525941602.
Moreover, the author thanks Tom\'{a}\v{s}  Roub\'{i}\v{c}ek for several helpful discussions during the initiation phase of this project.

%%%%%%%%%%%%%%%%%%%%%%%%%%%%%%%%%%%%%%%%%%%%%%%%%%%%%%%%%%%%%%
%%          Bibliography                                    %%
%%%%%%%%%%%%%%%%%%%%%%%%%%%%%%%%%%%%%%%%%%%%%%%%%%%%%%%%%%%%%%

\end{document}